\theoremstyle{definition}
\newtheorem{lemma}{Lemma}
\newtheorem{theorem}{Theorem}
\newtheorem{corollary}{Corollary}
\newtheorem{proposition}{Proposition}
\theoremstyle{remark}
\newcommand{\ddt}[1][]{\frac{d#1}{dt}} 
\def\wddt{\partial_t} 
\def\frame{F}
\def\lagframe{\bold{F}}
\def\Mzero{M_0} 
\def\Mt{M_t} 
\def\Mtzero{M_{t_0}} 
\def\R{\mathbb{R}}
\def\D{\mathcal{D}\mathit{iff}}
\def\C {\mathcal{C}}
\def\priodensity{p} 
\def\priodensitybis{q}
\def\lagdensity{\bold{p}} 
\def\lagdensitybis{\bold{q}} 
\def\testfun{h} 
\def\id{\mathit{id}} 
\def\diffusioncoef{r} 
\def\diffusiontensor{S}
\def\newdifftensor{\bold{S}}
\def\jerkgeneric{\mathfrak j} 
\def\jerk{j} 
\def\jerkname{yank}
\def\weight{\omega} 
\def\reaction{R}
\def\reg{m}
\def\forcefunction{Q}
\def\elastictensor{A} 
\def\elastictensorgeneric{\mathscr A} 
\newcommand{\jac}[1]{J\hspace{-1pt}#1}
\newcommand{\genC}[1][]{{{\mathfrak C}_{#1}}} 
\newcommand{\parabolic}{\mathcal{L}}
\newcommand{\gl}{\mathrm{GL}}
\renewcommand{\top}{T}
\newcommand{\dm}{d}
\def\id{id_{\R^\dm}}
\newcommand{\scp}[2]{\big\langle {#1}\, , \, {#2}\big\rangle}
\newcommand{\lform}[2]{\big( {#1}\, | \, {#2}\big)}
\newcommand{\Lform}[2]{\Big( {#1}\, \big| \, {#2}\Big)}
\newcommand{\overbar}[1]{\mkern 1.5mu\overline{\mkern-1.5mu#1}\mkern 1.5mu}
\title{Diffeomorphic shape evolution coupled with a reaction-diffusion PDE on a growth potential}
\date{}
\author[1]{Dai-Ni~Hsieh\thanks{dnhsieh@jhu.edu}}
\author[2]{Sylvain~Arguill\`ere\thanks{sylvain.arguillere@univ-lille.fr}}
\author[1]{Nicolas~Charon \thanks{charon@cis.jhu.edu}}
\author[1]{Laurent Younes \thanks{laurent.younes@jhu.edu}}
\affil[1]{Department of Applied Mathematics and Statistics, Johns Hopkins University, Baltimore, MD, USA}
\affil[2]{Laboratoire Paul Painlev\'e, University of Lille, France}
\begin{document}
\maketitle

\begin{abstract}
  This paper studies a longitudinal shape transformation model in which shapes are deformed in response to an internal growth potential that evolves according to an advection reaction diffusion process. This model extends prior works that considered a static growth potential, i.e., the initial growth potential is only advected by diffeomorphisms. We focus on the mathematical study of the corresponding system of coupled PDEs describing the joint dynamics of the diffeomorphic transformation together with the growth potential on the moving domain. Specifically, we prove the uniqueness and long time existence of solutions to this system with reasonable initial and boundary conditions as well as regularization on deformation fields. In addition, we provide a few simple simulations of this model in the case of isotropic elastic materials in 2D.
\end{abstract}

\section{Introduction}

We study in this paper a system of coupled evolution equations describing shape changes (e.g., growth, or atrophy) for a free domain in $\R^\dm$. A first equation, modeled as a diffusion-convection-reaction equation, defines the evolution of a scalar function defined on the domain,  this scalar function being roughly interpreted as a ``growth potential'' that  determines shape changes. The second equation describes the relationship between this potential and a smooth Eulerian velocity field and is modeled as a linear, typically high-order, partial differential equation (PDE). The free-form evolution of the domain then follows the flow associated with this velocity field. 

There is  a significant amount of literature on growth models and shape changes, where the dominant approach (in 3D) uses finite elasticity, modeling the strain tensor as a product of a growth tensor (non necessarily achievable by a 3D motion) and a correction term that makes it achievable, using this correction term as a replacement of the strain tensor in a hyper-elastic energy \citep{rodriguez1994stress}. Minimizing the energy of this residual stress then leads to PDEs describing the displacement defined on the original domain (assumed to be at rest) into the final one. We refer to several survey papers such as \citet{menzel2012frontiers,humphrey2003review,ambrosi2011perspectives} for references. 

In this paper, we tackle the shape change problem with a different approach. First, we use a dynamical model of time-dependent shapes, which allows us to analyze each infinitesimal step using linear models. Second, our model includes no residual stress but rather assumes that a new elastic equilibrium is reached at each instant. Our focus is indeed on slow evolution of living tissues, in which shape changes occur over several years and tissues can be assumed to remain constantly at rest. More precisely, our model  assumes that at each time, an infinitesimal force places the shape into a new equilibrium, which becomes the new reference configuration. The force is assumed to derive from a potential, itself associated to the solution of a reaction-convection-diffusion equation, while the new equilibrium is obtained as the solution of a linear equation that characterizes the minimizer of a regularized deformation energy. Figure \ref{fig:expl} provides an example of such an evolution.

\begin{figure}[hbt!]
    \centering
    \begin{subfigure}{0.32\textwidth}
        \centering
        \includegraphics[trim = 30 50 30 50, clip, width = \textwidth]{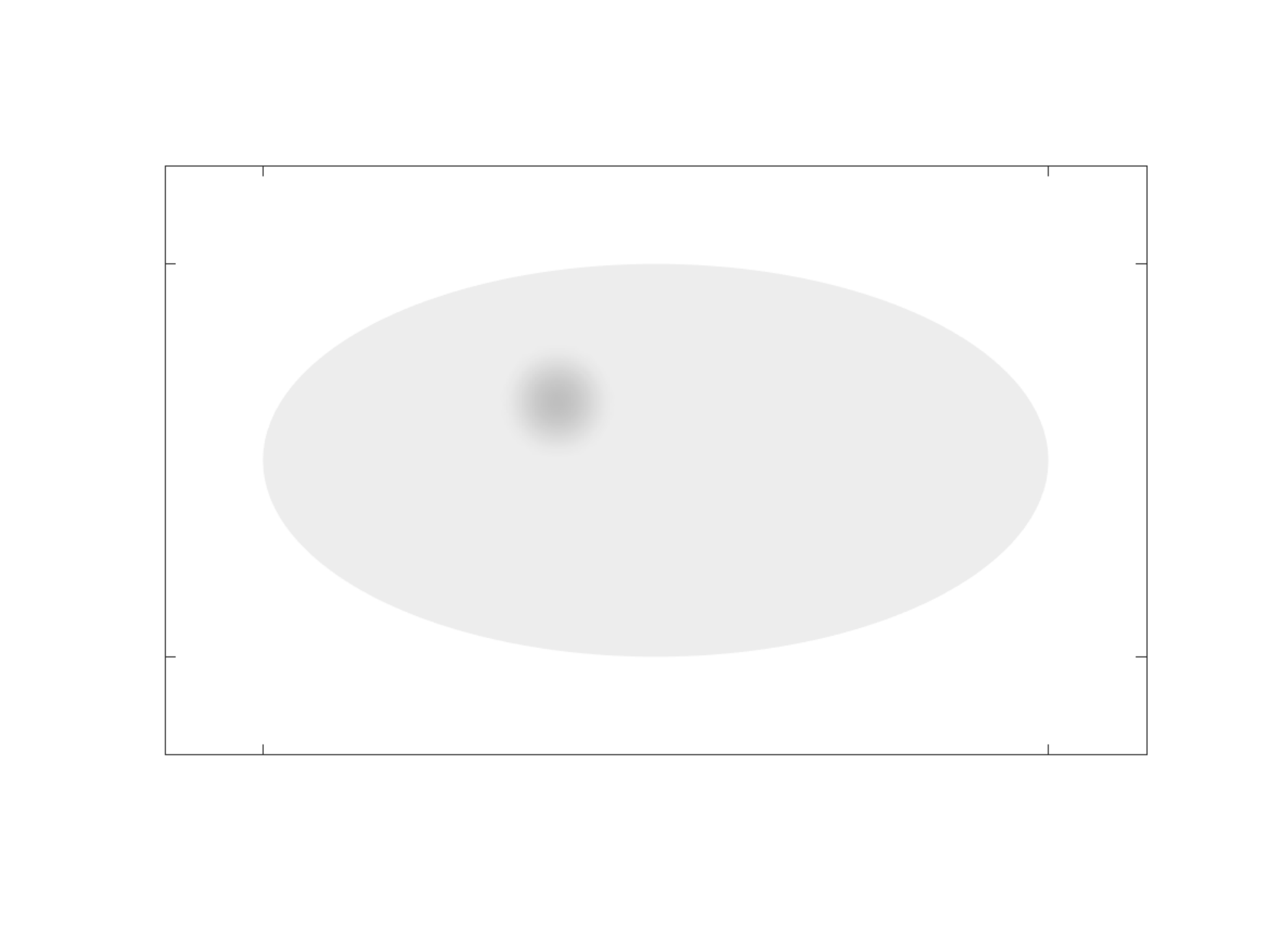}
    \end{subfigure}
    \begin{subfigure}{0.32\textwidth}
        \centering
        \includegraphics[trim = 30 50 30 50, clip, width = \textwidth]{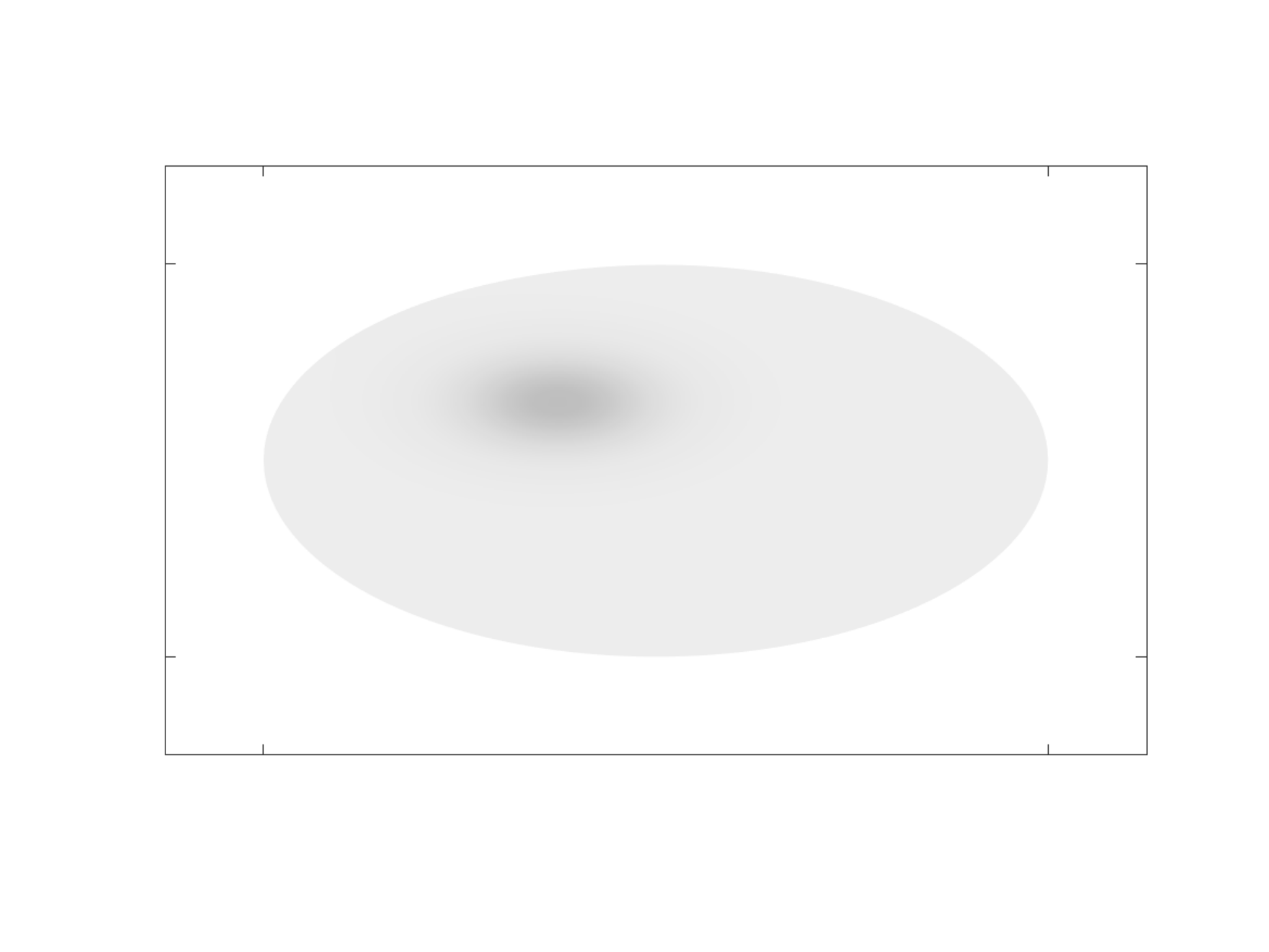}
    \end{subfigure}
    \begin{subfigure}{0.32\textwidth}
        \centering
        \includegraphics[trim = 30 50 30 50, clip, width = \textwidth]{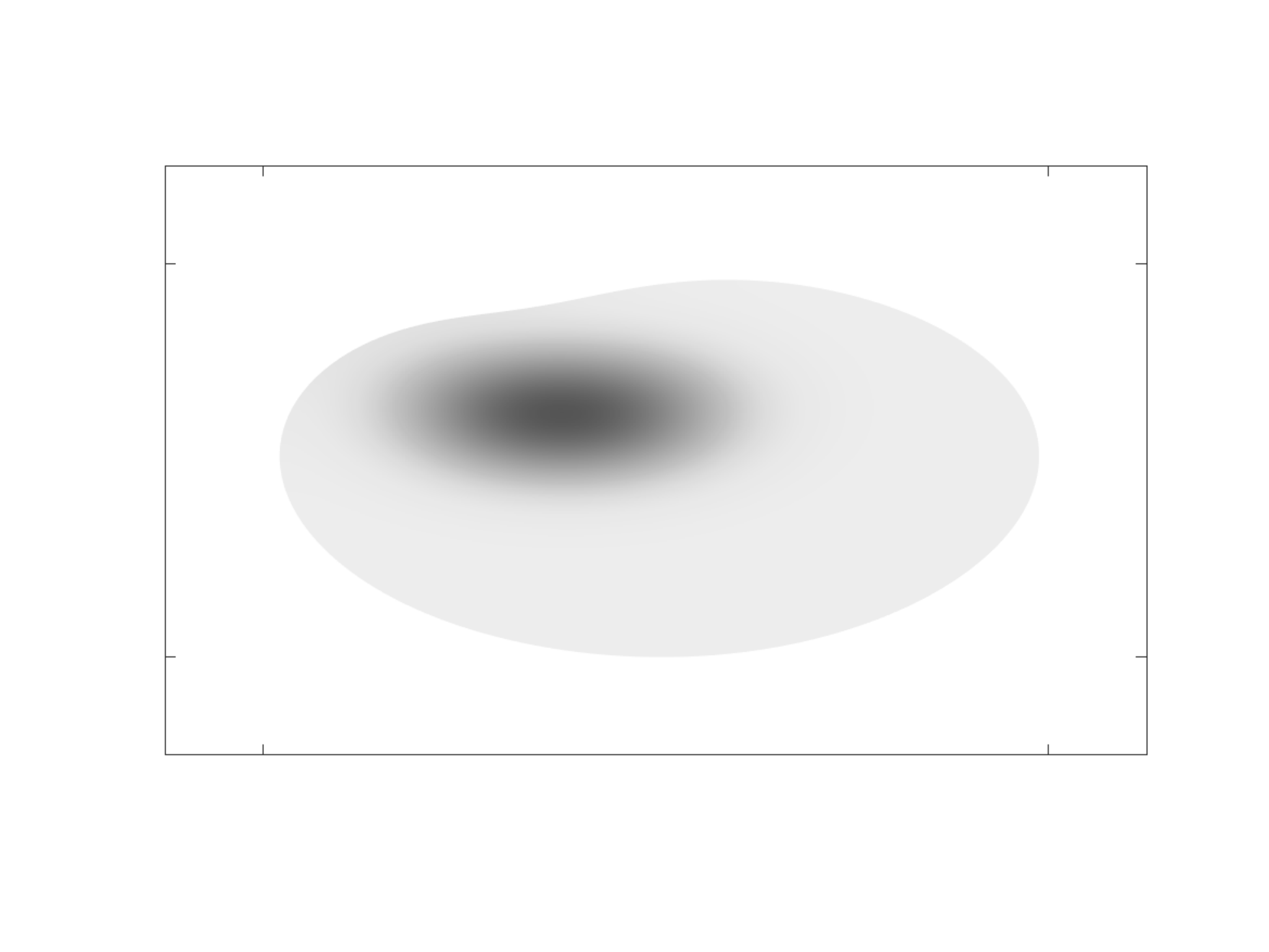}
    \end{subfigure}
    \\
    \begin{subfigure}{0.32\textwidth}
        \centering
        \includegraphics[trim = 30 50 30 50, clip, width = \textwidth]{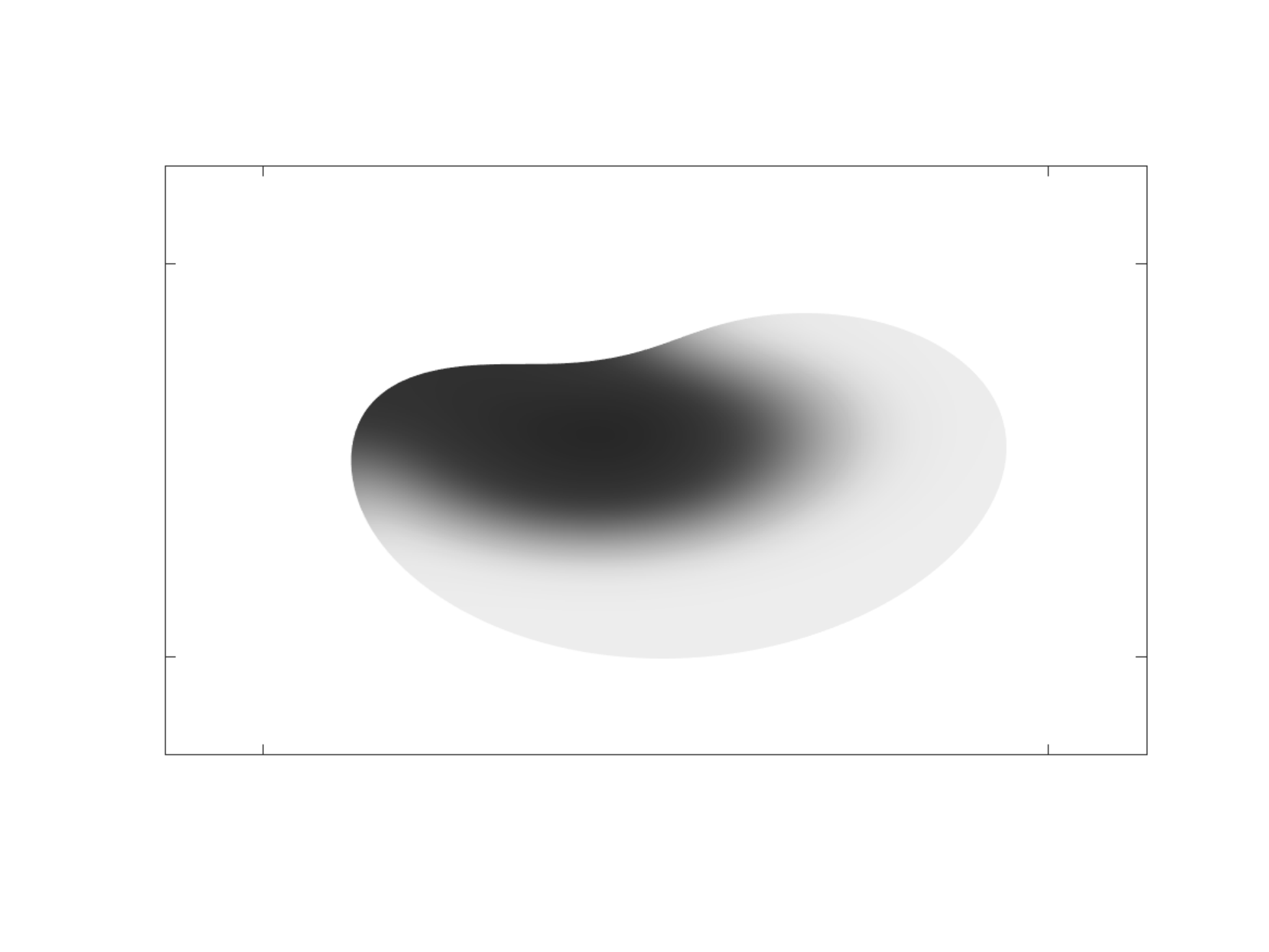}
    \end{subfigure}
    \begin{subfigure}{0.32\textwidth}
        \centering
        \includegraphics[trim = 30 50 30 50, clip, width = \textwidth]{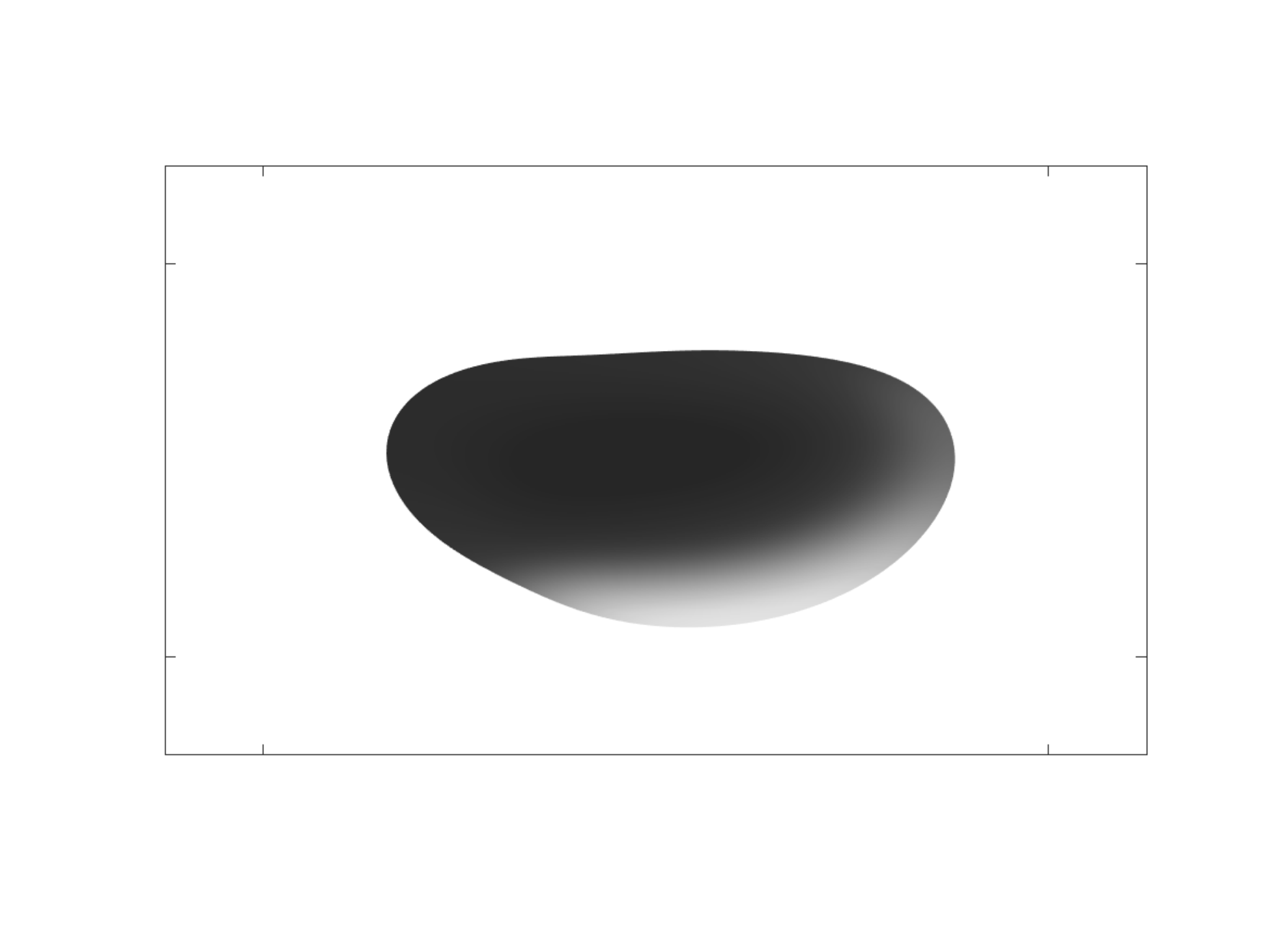}
    \end{subfigure}
    \begin{subfigure}{0.32\textwidth}
        \centering
        \includegraphics[trim = 30 50 30 50, clip, width = \textwidth]{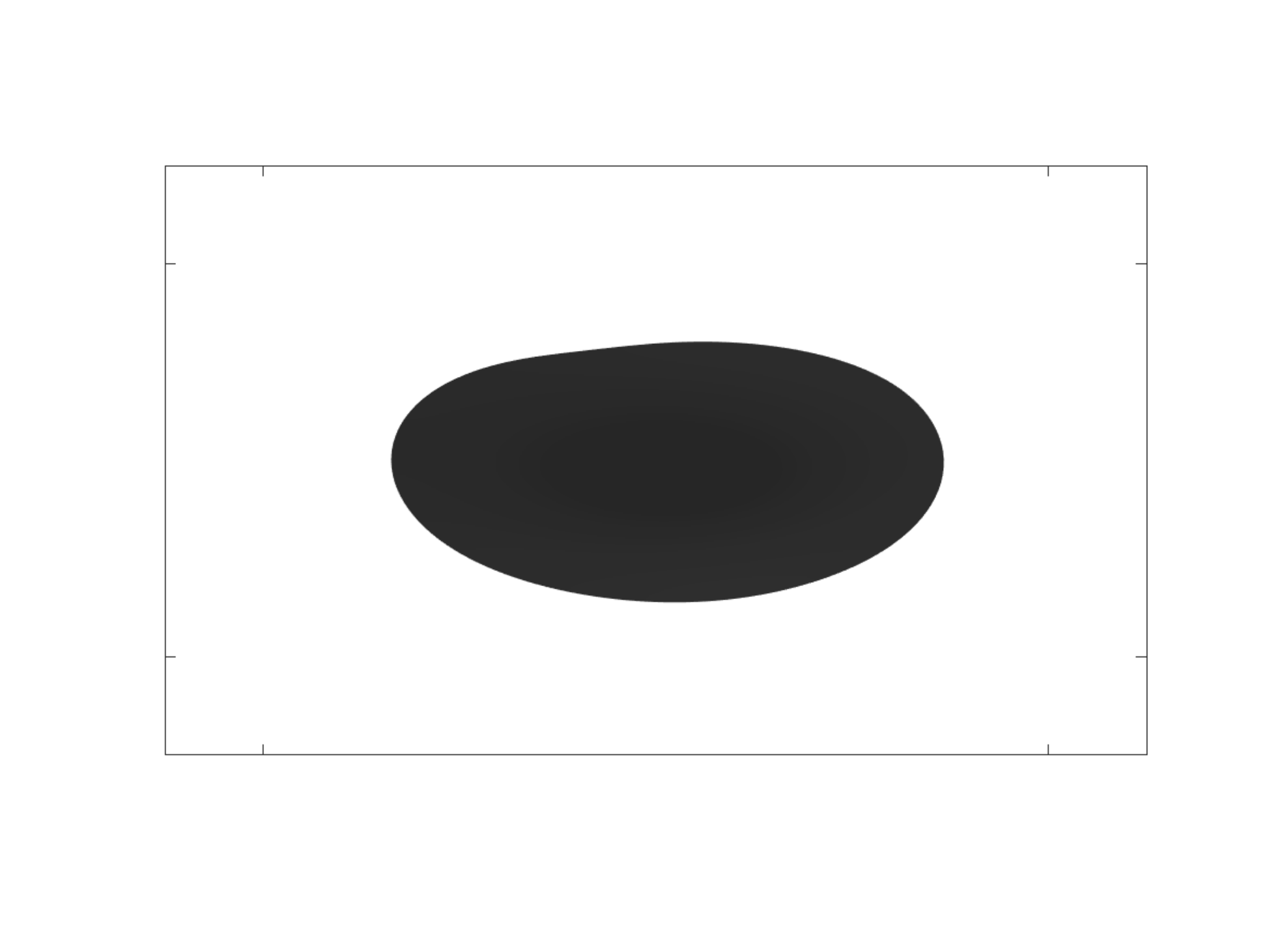}
    \end{subfigure}
    \caption{\label{fig:expl} Example of shape change in two dimensions relative to a negative growth potential (to be read from left to right and up to down). The potential is initialized locally, and spreads while affecting the shape of the domain until reaching saturation.}
\end{figure}

In our main result (see Theorem \ref{thm:main}), we will  prove  that the full system has solutions over arbitrary time intervals, and that the shape domain evolves according to a diffeomorphic flow. This result opens the possibility to formulate optimal control and inverse problems in which one determines initial growth potentials (within a parametrized family) transforming a given initial shape into a target shape. Such inverse problems were considered in the multiplicative strain tensor framework and for thin-plate models in \cite{lewicka2011Foppl}, and for crystal growth control, albeit not in a free-form setting, in \citet{trifkovic2009multivariable, bajcinca2013analytic}. The optimal control of free-form surfaces modeling the interface between two phases was considered in \cite{bernauer2011optimal}. In \citet{bressan2018model}, tissue growth is modeled through a control system evolving as an elastic body experiencing local volume changes controlled by the concentration of a ``morphogen'', which itself evolves according to a linear elliptic equation. Our model can be seen as a regularization of an extension of the model in that paper (we make, in particular, non-isotropic assumptions and our growth potential evolves according to a non-linear equation), our regularization allowing us to obtain long term existence and uniqueness results, that were not available in  \citet{bressan2018model}. Finally, \cite{kulason2020reaction-diffusion} introduce a reaction-diffusion model to analyze disease propagation and  thickness changes in brain cortical surfaces, where changes are happening (unlike the model studied in our paper) within a fixed domain. 


This paper follows and completes \citet{hsieh2020mechanistic} (see also \citet{Hsieh2019}), which adopts a similar approach with a functional dependency of the growth potential on the diffeomorphic flow. This assumption is relaxed here, since the potential follows its own PDE, with an evolution coupled with that of the shape. This extension will, as we will see, significantly complicate the theoretical study of the equations, as well as their numerical implementation. 

\section{General framework and main theorems.}

\subsection{Notation} 
\paragraph{Ambient space, vector fields and diffeomorphisms.} We will work in the Euclidean space $\R^\dm$. For an integer $s\geq0$, and open subset $U$ of $\R^\dm$, we let $H^s(U)$ be the Hilbert space of all real functions on $U$ of Sobolev class $H^s=W^{2,s}$. Recall that $H^0(U)=L^2(U).$

We  denote by $\mathcal{C}_0^\reg(\R^\dm,\R^\dm)$ the set of all $\reg$-times continuously differentiable vector fields whose $k$-th derivative $D^kv$  go to zero at infinity for every $k$ between 0 and $\reg$. It is a Banach space under the usual norm
$$
\|v\|_{\reg, \infty} = \sum_{k=0}^\reg \max_{x \,\in\, \mathbb{R}^\dm}|D^k v(x)|,\quad v\in \C_0^\reg(\R^\dm,\R^\dm).
$$

For a generic function $f: [0, T] \times \mathbb{R}^\dm \rightarrow \mathbb{R}^\dm$, we will use the notation $f(t): \mathbb{R}^\dm \rightarrow \mathbb{R}^\dm$ defined by $f(t)(x) = f(t, x)$. We will use $\genC$ to denote a generic constant and $\genC[a]$ to show a generic constant depending on $a$. The value of such constants may change from equation to equation while keeping the same notation.

 Now, assume $\reg\geq1$. Let $\D^\reg(\R^\dm)$ be the space of $\mathcal{C}^\reg$-diffeomorphisms of $\R^\dm$ that go to the identity at infinity, that is, the space of all diffeomorphisms $\varphi$ such that $\varphi-\id\in \C_0^\reg(\R^\dm,\R^\dm)$, with $\id:\R^\dm\rightarrow\R^\dm$ the identity map $\id(x)=x$. Do note that $\D^\reg(\R^\dm)$ is an open subset of the Banach affine space $\id+\C_0^\reg(\R^\dm,\R^\dm)$, with the induced topology. It is also known to be a topological group for the law of composition \citep{bruveris2016completeness}, so we also have $\varphi^{-1}\in \D^\reg(\R^\dm)$. We can then define on $\D^\reg(\R^\dm)$ the distance $d_{\reg,\infty}$ by
\begin{equation}
\label{eq:d.reg.inf}
d_{\reg,\infty}(\varphi,\psi)=\max\big(\Vert \varphi-\psi\Vert_{\reg,\infty},\Vert\varphi^{-1}-\psi^{-1}\Vert_{\reg,\infty}\big),\quad \varphi,\psi\in\D^\reg(\R^d),
\end{equation}
whose open balls will be denoted $B_r(\varphi),\ r>0,\ \varphi\in\D^\reg(\R^d).$ This is easily checked to be a complete distance, and it does not change the topology of $\D^\reg(\R^\dm)$. We introduce it because we will often need to assume bounds on diffeomorphisms and their inverse at the same time.

\paragraph{Operators and controlled curves in Banach spaces.} 

If $B$ and $\widetilde B$ are separable Banach spaces,
$\mathscr{L}(B, \widetilde B)$ will denote the vector space of bounded linear operators from $B$ to  $\widetilde B$. Weak convergence of sequences $(x_n)$ in $B$ will be denoted by  $x_n \rightharpoonup x$. Denoting the topological dual of $B$ by $B^*$, we will use the notation $(\mu \mid v)$ rather than $\mu(v)$ to denote the evaluation of $\mu \in B^*$ at $v \in B$. {
We say that a linear operator $A \in \mathscr{L}(B, B^*)$ is symmetric if the corresponding bilinear form $(v, w) \mapsto (Av \mid w)$ is symmetric. }

For a given $T>0$ and open subset $U$ of a Banach space ${B}$,
we will denote by $L^p([0,T],U),\ p\in[1,+\infty]$ the space of measurable maps $f:[0,T]\rightarrow U$ such that $t\mapsto \Vert f(t) \Vert_B^p$ is integrable. One can then define the Sobolev space $W^{p,1}([0,T],U)$ whose elements $f$ are differentiable almost everywhere, i.e. the differential $\displaystyle t\mapsto\ddt[f] (t)=\lim_{t'\rightarrow t}\frac{f(t')-f(t)}{t'-t}$ 
exists almost everywhere and 
$$
\forall t_0,t_1\in[0,T],\quad  f(t_1)-f(t_0)=\int_{[t_0,t_1]}\ddt[f](t)dt.
$$
For $p=2,$ we will simply write $H^1$ instead of $W^{2,1}.$

Case in point, for a time-dependent vector field $v\in L^1([0,T],\C^\reg_0(\R^\dm,\R^\dm))$, there is a unique  $\varphi:t\mapsto\varphi(t)$ in $W^{1,1}([0,T],\D^\reg(\R^\dm))$ that satisfies $\varphi(0)=\id$ and $\ddt[\varphi](t)=v(t)\circ\varphi(t)$ for almost every $t$.

\paragraph{RKHS (Reproducing Kernel Hilbert Spaces) of vector fields.} Throughout this paper, $V$ is a Hilbert space of vector fields on $\mathbb{R}^\dm$ that is continuously embedded in $C_0^\reg(\mathbb{R}^\dm, \mathbb{R}^\dm)$ for some $\reg \geq 1$ (we will write $V \hookrightarrow C_0^\reg(\mathbb{R}^\dm, \mathbb{R}^\dm)$), with inner product $\scp{\cdot}{\cdot}_V$ and norm $\|\cdot\|_V$. Since $V \hookrightarrow C_0^\reg(\mathbb{R}^\dm, \mathbb{R}^\dm)$, there exists a constant $c_V$ such that 
\begin{equation}
\label{eq:cv}
\|v\|_{\reg, \infty} \leq c_V \|v\|_V.    
\end{equation}
The duality map $L_V: V \to V^*$ is given by
\[
\lform{L_V \hspace{1pt} v}{w} = \scp{v}{w}_V
\]
and provides an isometry from $V$ onto $V^*$. We denote the inverse of $L_V$ by $K_V \in \mathscr{L}(V^*, V)$, which, because of the embedding assumption, is a kernel operator \citep{aronszajn1950theory}. Note that
\[
	\|v\|_V^2 = (L_V \hspace{1pt} v \mid v) = (K_V^{-1} \hspace{1pt} v \mid v) .
\]
As an example, the space $V$ can be the reproducing kernel Hilbert space (RKHS) associated with a Mat\'ern kernel of some order $s$, and some width $\sigma$, which, in three dimensions, implies that $V$ is a Sobolev space $H^{s+2}$. For the specific value {$s=3$}, which we will use in our experiments, the kernel operator (when applied to a vector measure $\mu\in V^*$) takes the form
\[
(K_V \hspace{1pt} \mu)(x) = \int_{\R^\dm} \kappa(|x-y|/\sigma) \, d\mu(y)
\]
with $\kappa(t) = (1+t+2t^2/15 + t^3/15)e^{-t}$.

\paragraph{Weak derivatives for Hilbert-valued functions.} 
Following \cite[Chapter 1, Section 1.3]{magenes1972non}, we define generalized derivatives of functions $u: (0,T) \to H$, where $T$ is a positive number and $H$ a Hilbert space as follows. Let $\mathscr D\big((0,T)\big)$ denote the Schwartz space of compactly supported infinitely differentiable real-valued functions defined on $(0,T)$. The space of $H$-valued distributions is
\[
	\mathscr{D}^*\big( (0, T), H \big) \colonequals \mathscr{L}(\mathscr{D}\big( (0, T) \big), H) .
\]
If $u \in \mathscr{D}^*\big( (0, T), H \big)$, its generalized derivative, denoted $\wddt u$, is the element of $\mathscr{D}^*\big( (0, T), H \big)$ defined by
\begin{equation}
\label{eq:weak.der.def}    
\wddt u(\varphi) \colonequals -u\Big(\ddt[\varphi]\Big) \,\in\, H
	\ \ \ \mbox{ for all } \varphi \in \mathscr{D}\big( (0, T) \big).
\end{equation}

We can identify any $u \in L_{\mathrm{loc}}^1([0, T], H)$ (i.e., $u\in L^1([a,b], H)$ for all $[a, b] \subset (0, T)$), with the corresponding $\widetilde u \in \mathscr{D}^*\big( (0, T), H \big)$ given by
\[
	\widetilde u(\varphi) \colonequals \int_0^T u(t) \, \varphi(t) \, dt \,\in\, H
	\ \ \ \mbox{ for all } \varphi \in \mathscr{D}\big( (0, T) \big)
\]
and show that $\widetilde u \in \mathscr{D}^*\big( (0, T), H \big)$. We can therefore see  $L_{\mathrm{loc}}^1([0, T], H)$ as a  subset of $\mathscr{D}^*\big( (0, T), H \big)$.

In what follows, we will use the following two results, both taken from \citet{magenes1972non}.

\begin{theorem}
\label{thm:u_dtu_continuous}
Let $T$ be a positive number and $\Omega$  an open subset of $\R^\dm$. 
Assume that $u \in L^2([0,T], H^1(\Omega))$ and that $\wddt u \in L^2([0,T], H^1(\Omega)^*)$. Then $u \in C([0, T], L^2(\varOmega))$. (See \citet[Chapter 1, Proposition 2.1 and Theorem 3.1]{magenes1972non}.)
\end{theorem}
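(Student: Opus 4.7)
The result is standard in the theory of evolution equations, and the approach is the classical Lions--Magenes argument based on the Gelfand triple $H^1(\varOmega)\hookrightarrow L^2(\varOmega)\hookrightarrow H^1(\varOmega)^*$, where $L^2(\varOmega)$ is identified with its own dual via its inner product. The core of the proof is to establish the chain-rule identity
$$
\frac{d}{dt}\|u(t)\|_{L^2(\varOmega)}^2 \;=\; 2\,\big(\wddt u(t)\,\big|\,u(t)\big)
$$
in the sense of scalar distributions on $(0,T)$, where the pairing on the right is the $H^1(\varOmega)^*\times H^1(\varOmega)$ duality. Once this identity is known, the right-hand side lies in $L^1(0,T)$ by Cauchy--Schwarz in time applied to the $L^2([0,T], H^1(\varOmega)^*)$ and $L^2([0,T], H^1(\varOmega))$ norms, so $t\mapsto\|u(t)\|_{L^2}^2$ equals almost everywhere an absolutely continuous function on $[0,T]$, hence admits a continuous representative.

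To prove the chain rule, I would first extend $u$ to a function on $\R$ with values in $H^1(\varOmega)$ by a reflection-and-cutoff procedure, so that both the extension and its weak time derivative remain in $L^2(\R, H^1(\varOmega))$ and $L^2(\R, H^1(\varOmega)^*)$ respectively. Then I would convolve with a standard family of real-valued time mollifiers $\rho_\varepsilon$ to produce approximations $u_\varepsilon \in C^\infty(\R, H^1(\varOmega))$ satisfying $u_\varepsilon\to u$ in $L^2_{\mathrm{loc}}(\R, H^1(\varOmega))$ and $\partial_t u_\varepsilon\to \wddt u$ in $L^2_{\mathrm{loc}}(\R, H^1(\varOmega)^*)$ (this uses that mollification commutes with the distributional derivative). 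For each smooth $u_\varepsilon$ the classical chain rule gives
$$
\frac{d}{dt}\|u_\varepsilon(t)\|_{L^2}^2 \;=\; 2\,\big(\partial_t u_\varepsilon(t)\,\big|\,u_\varepsilon(t)\big),
$$
since $u_\varepsilon(t) \in H^1(\varOmega) \subset L^2(\varOmega)$ and the duality pairing reduces to the $L^2$ inner product. Testing against an arbitrary $\varphi\in\mathscr{D}\big((0,T)\big)$ and passing to the limit $\varepsilon\to 0$ (the product on the right passes to the limit since one factor converges strongly in $L^2([0,T], H^1(\varOmega))$ and the other in $L^2([0,T], H^1(\varOmega)^*)$) yields the desired distributional identity.

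It remains to upgrade continuity of $\|u(t)\|_{L^2}$ to strong continuity of $u$ itself. First, one notes that $u$ admits a continuous representative into $H^1(\varOmega)^*$: indeed, $u\in L^2([0,T], L^2(\varOmega))\subset L^2([0,T], H^1(\varOmega)^*)$ and $\wddt u\in L^2([0,T], H^1(\varOmega)^*)$, so $u\in W^{1,1}([0,T], H^1(\varOmega)^*)\hookrightarrow C([0,T], H^1(\varOmega)^*)$. Combined with the $L^2(\varOmega)$-boundedness implied by continuity of the norm, the continuous representative in $H^1(\varOmega)^*$ is in fact weakly continuous as a map $[0,T]\to L^2(\varOmega)$ (any weak cluster point in $L^2$ must coincide with the $H^1(\varOmega)^*$ limit by density). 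Weak continuity together with continuity of the norm gives strong continuity in the Hilbert space $L^2(\varOmega)$ by the Radon--Riesz property, concluding the proof.

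\textbf{Main obstacle.} The delicate step is the mollification/density argument that produces the chain-rule identity: it requires the joint convergence of $u_\varepsilon$ in $L^2([0,T], H^1)$ and of $\partial_t u_\varepsilon$ in $L^2([0,T], H^1(\varOmega)^*)$, which forces the choice of extension off $[0,T]$ and an honest use of the Gelfand triple structure. Everything else is routine once the identity is in hand.
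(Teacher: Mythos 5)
The paper does not prove this statement at all --- it is quoted verbatim from Lions--Magenes, and your argument is precisely the standard proof found there (and in most evolution-equation texts): the Gelfand triple $H^1(\varOmega)\hookrightarrow L^2(\varOmega)\hookrightarrow H^1(\varOmega)^*$, the mollification-based chain rule for $t\mapsto\|u(t)\|_{L^2}^2$, and the upgrade from weak $L^2$-continuity plus norm continuity to strong continuity via the Radon--Riesz property. Your sketch is correct, you have identified the genuinely delicate step (the extension/mollification argument for the chain rule), and it is worth noting that no boundary regularity of $\varOmega$ is required since the proof only uses the abstract triple structure together with the density of $H^1(\varOmega)$ in $L^2(\varOmega)$.
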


We will also use the following general result on weak solutions of parabolic equations. A bounded linear map $\parabolic: L^2([0,T], H^1(\Omega))\to L^2([0,T], H^1(\Omega)^*)$ is coercive if there exists $\alpha > 0$ such that $(\parabolic \hspace{1pt} u \mid u) \geq \alpha \, \int_0^T \|u\|_{H^1(\Omega)}^2\,dt$ for all $u \in L^2([0,T], H^1(\Omega))$. 

\begin{theorem}
\label{thm:abstract_parabolic_ivp}
Given a coercive bounded linear mapping $\parabolic: L^2([0,T], H^1(\Omega))\to L^2([0,T], H^1(\Omega)^*)$, a function $f$ in $L^2([0,T], H^1(\Omega)^*)$ and an initial condition $u_0 \in L^2(\Omega)$, there exists a unique solution 
$u\in L^2([0,T], H^1(\Omega))$ of the parabolic initial value problem
\begin{equation}
	\left\{
		\begin{array}{l}
			\partial_t \hspace{1pt} u + \parabolic \hspace{1pt} u = f \\
			u(0) = u_0\,.
		\end{array}
	\right. 
	\label{eq:abstract_parabolic_ivp}
\end{equation}
(See \citet[Chapter 3, Theorem 1.1, Section 4.3, and Remark 4.3]{magenes1972non}.)
\end{theorem}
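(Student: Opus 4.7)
The plan is to prove existence by a Galerkin approximation and uniqueness by coercivity combined with Theorem~\ref{thm:u_dtu_continuous}. First, I would pick a Hilbertian basis $(e_k)_{k \geq 1}$ of $H^1(\Omega)$ whose members are also orthogonal in $L^2(\Omega)$ (for instance, eigenfunctions of the Neumann Laplacian on $\Omega$, assuming enough regularity; otherwise one can use a Schauder basis obtained by Gram--Schmidt from a countable dense family). Setting $V_n = \mathrm{span}(e_1,\dots,e_n)$, I would look for approximants $u_n(t) = \sum_{k=1}^n c_k^n(t) e_k$ satisfying the projected problem
\begin{equation*}
\lform{\partial_t u_n(t)}{e_k} + \lform{\parabolic u_n (t)}{e_k} = \lform{f(t)}{e_k}, \quad k = 1,\dots,n,
\end{equation*}
with $u_n(0)$ the $L^2$-projection of $u_0$ onto $V_n$. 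Because $\parabolic$ is bounded and $f$ is in $L^2([0,T],H^1(\Omega)^*)$, this is a linear ODE system in the coefficients $c_k^n$ with measurable coefficients in $L^2([0,T])$, whose global well-posedness on $[0,T]$ follows from Carath\'eodory theory.

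Next, I would establish uniform a priori bounds. Testing the Galerkin equation against $u_n(t)$ itself and using coercivity together with Young's inequality, one obtains, for every $\tau \in [0,T]$,
\begin{equation*}
\tfrac{1}{2}\|u_n(\tau)\|_{L^2(\Omega)}^2 + \tfrac{\alpha}{2}\int_0^\tau \|u_n\|_{H^1(\Omega)}^2\,dt \;\leq\; \tfrac{1}{2}\|u_0\|_{L^2(\Omega)}^2 + \tfrac{1}{2\alpha}\int_0^T \|f\|_{H^1(\Omega)^*}^2 \,dt ,
\end{equation*}
so $(u_n)$ is bounded uniformly in $L^\infty([0,T],L^2(\Omega)) \cap L^2([0,T],H^1(\Omega))$. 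Writing $\partial_t u_n = \Pi_n(f - \parabolic u_n)$, where $\Pi_n$ is the $L^2$-projection onto $V_n$ extended to $H^1(\Omega)^*$ by duality, and using the continuity of $\parabolic$, one gets a uniform bound on $\partial_t u_n$ in $L^2([0,T], H^1(\Omega)^*)$ as well.

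Then I would extract a subsequence with $u_n \rightharpoonup u$ in $L^2([0,T],H^1(\Omega))$ and $\partial_t u_n \rightharpoonup \partial_t u$ in $L^2([0,T],H^1(\Omega)^*)$, and pass to the limit in the weak formulation tested against functions of the form $\varphi(t) e_k$ with $\varphi \in \mathscr{D}((0,T))$. The resulting identity, extended by density to arbitrary test fields in $L^2([0,T],H^1(\Omega))$, shows that $\partial_t u + \parabolic u = f$ in $L^2([0,T],H^1(\Omega)^*)$. Theorem~\ref{thm:u_dtu_continuous} then yields $u \in C([0,T],L^2(\Omega))$, and an integration by parts in time against a test function $\varphi$ with $\varphi(0) \neq 0$, $\varphi(T) = 0$ identifies $u(0)$ as the $L^2$-limit of $u_n(0)$, which is $u_0$.

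For uniqueness, applying the equation to $w = u_1 - u_2$ (which satisfies $w(0) = 0$ and the homogeneous equation in the dual sense), the integration-by-parts identity $\int_0^\tau \lform{\partial_t w}{w}\,dt = \tfrac12\|w(\tau)\|_{L^2(\Omega)}^2$, legitimate thanks to Theorem~\ref{thm:u_dtu_continuous}, combined with coercivity of $\parabolic$, yields $\tfrac12\|w(\tau)\|_{L^2(\Omega)}^2 + \alpha\int_0^\tau \|w\|_{H^1(\Omega)}^2\,dt \leq 0$, hence $w \equiv 0$. The main obstacle is not any single step in isolation but the careful handling of $\partial_t u$ as an element of the dual $L^2([0,T],H^1(\Omega)^*)$: both the passage to the limit for $\partial_t u_n$ and the integration-by-parts identity hinge on Theorem~\ref{thm:u_dtu_continuous}, which is precisely the ingredient that makes the Galerkin scheme close up and gives meaning to the pointwise initial trace $u(0)$.
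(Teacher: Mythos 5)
The paper does not actually prove this statement: it is quoted verbatim from Lions--Magenes (Chapter 3, Theorem 1.1), so there is no internal proof to match yours against. Your Galerkin argument is the classical textbook proof of that cited result, and it is essentially correct: the energy estimate, the weak-compactness extraction, the identification of the initial trace via integration by parts in time, and the uniqueness argument (for which taking $\tau = T$ suffices, so only the full-interval coercivity is needed) are all standard and sound. Two caveats are worth flagging, because the theorem as stated is slightly more general than what a Galerkin scheme handles directly. First, $\parabolic$ is only assumed to be a bounded coercive operator between the Bochner spaces $L^2([0,T],H^1(\Omega))$ and $L^2([0,T],H^1(\Omega)^*)$; your projected system is an ODE in the coefficients $c^n_k$, and your energy inequality at intermediate times $\tau$ uses coercivity on $[0,\tau]$, both of which implicitly require $\parabolic$ to act locally in time, i.e.\ $(\parabolic u)(t)$ determined by $u(t)$ through a measurable, uniformly bounded and coercive family of forms. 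That is exactly the situation in which the theorem is applied in this paper (the operators $\mathcal{L}_{\varphi,\lambda}$ are induced by time-dependent bilinear forms), but in the fully abstract setting one would instead invoke Lions' projection lemma. Second, the uniform bound on $\partial_t u_n$ in $L^2([0,T],H^1(\Omega)^*)$ needs the projections $\Pi_n$ to be uniformly bounded on $H^1(\Omega)$; your primary choice of basis (eigenfunctions orthogonal in both $L^2$ and $H^1$) delivers this, but the ``Gram--Schmidt from a dense family'' fallback does not obviously do so and should be dropped or justified. With those qualifications, your proof is a legitimate self-contained substitute for the citation.
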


We will also need the following technical lemma. Its proof is a simple application of functional approximation theorems in $L^2$ and we shall omit it for brevity.
\begin{lemma}
\label{lemma:dt_norm_squared}
Let $\mathcal{H}$ be a Banach space and suppose that $w \in L^2([0, T], \mathcal{H})$ with $\partial_t \hspace{1pt} w \in L^2([0, T], \mathcal{H}^*)$. Then the derivative in the sense of distributions $\partial_t \hspace{1pt} \|w(\cdot)\|_{\mathcal{H}}^2$ is a function in $L^1([0, T])$ and equals to $t \mapsto 2 \, \big( (\partial_t \hspace{1pt} w)(t) \mid w(t) \big)$ for almost every $t$.
\end{lemma}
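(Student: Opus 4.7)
The plan is to regularize $w$ in time by mollification, apply the classical chain rule in the smooth setting, and then pass to the limit in the sense of distributions on $(0,T)$. I extend $w$ by zero outside $[0,T]$, fix a standard nonnegative mollifier $\rho_\varepsilon \in \mathscr{D}(\R)$ with $\int \rho_\varepsilon = 1$, and set $w_\varepsilon \colonequals \rho_\varepsilon * w \in C^\infty(\R, \mathcal{H})$. A direct computation from \eqref{eq:weak.der.def} shows that the classical $\mathcal{H}$-valued derivative $w_\varepsilon' = \rho_\varepsilon' * w$ agrees, after identification through the Riesz map $L_{\mathcal{H}} : \mathcal{H} \to \mathcal{H}^*$, with the $\mathcal{H}^*$-valued convolution $\rho_\varepsilon * \wddt w$. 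Standard mollifier theory then yields $w_\varepsilon \to w$ in $L^2([0,T], \mathcal{H})$ and $w_\varepsilon' \to \wddt w$ in $L^2_{\mathrm{loc}}((0,T), \mathcal{H}^*)$ as $\varepsilon \to 0^+$.

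Since $w_\varepsilon$ is smooth in $t$ with values in a Hilbert space, the classical chain rule gives $\tfrac{d}{dt}\|w_\varepsilon(t)\|_{\mathcal{H}}^2 = 2\lform{w_\varepsilon'(t)}{w_\varepsilon(t)}$. Testing against an arbitrary $\varphi \in \mathscr{D}((0,T))$ and integrating by parts,
\[
-\int_0^T \|w_\varepsilon(t)\|_{\mathcal{H}}^2 \, \varphi'(t)\, dt \,=\, 2\int_0^T \lform{w_\varepsilon'(t)}{w_\varepsilon(t)} \, \varphi(t)\, dt .
\]
Letting $\varepsilon \to 0^+$, the left-hand side converges to $-\int_0^T \|w(t)\|_{\mathcal{H}}^2\, \varphi'(t)\, dt$ by continuity of squaring from $L^2$ to $L^1$, while the right-hand side converges to $2\int_0^T \lform{(\wddt w)(t)}{w(t)} \, \varphi(t)\, dt$ by bilinearity of the duality pairing together with the $L^2$-convergences above (the compact support of $\varphi$ in $(0,T)$ confines the integration to an interval where the convergence of $w_\varepsilon'$ in $\mathcal{H}^*$ holds). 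Since $\varphi$ was arbitrary, this identifies the distributional derivative $\wddt \|w(\cdot)\|_{\mathcal{H}}^2$ with the scalar function $t \mapsto 2\lform{(\wddt w)(t)}{w(t)}$. Cauchy--Schwarz applied to $\wddt w \in L^2([0,T], \mathcal{H}^*)$ and $w \in L^2([0,T], \mathcal{H})$ places this function in $L^1([0,T])$.

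The main subtle point is the equivalence, used above, between the two a priori different interpretations of $w_\varepsilon'$: the classical $\mathcal{H}$-valued derivative of the smooth mollification of $w$, and the $\mathcal{H}^*$-valued convolution of the distribution $\wddt w$. This is a direct consequence of \eqref{eq:weak.der.def} combined with the Riesz identification, and the boundary contributions from the zero-extension play no role since $\varphi$ has compact support in $(0,T)$. The remaining steps reduce to standard mollifier estimates and continuity of the duality pairing.
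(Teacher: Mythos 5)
Your mollification argument is correct and is precisely the ``functional approximation in $L^2$'' proof that the paper alludes to while omitting the details; in particular, restricting to test functions compactly supported in $(0,T)$ is the right way to neutralize the endpoint contributions of the zero-extension and to justify the identification of $w_\varepsilon'$ with $\rho_\varepsilon * \partial_t w$ on the support of $\varphi$. The one caveat is that your use of the Riesz map and of the inner-product chain rule requires $\mathcal{H}$ to be a Hilbert space rather than the Banach space of the statement --- but the formula $\partial_t\|w\|_{\mathcal{H}}^2 = 2\,(\partial_t w \mid w)$ is only meaningful in that setting, and your argument transfers verbatim to the Gelfand-triple form ($w\in L^2([0,T],V)$, $\partial_t w\in L^2([0,T],V^*)$, derivative of $\|w\|_{H}^2$ with $V=H^1$ and pivot $H=L^2$) in which the lemma is actually applied later in the paper.
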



\subsection{Control systems for shapes}
\label{ssec:control_systems}

We want to refine the system introduced in \cite{hsieh2020mechanistic}, that was designed as a mathematical model representing possibly pathological shape changes in human organs or tissues. The control system starts with 
an initial volume, and exhibits a time-dependent deformation induced by a vector field on the domain, where this vector field results from auxiliary variables defined on the volume (e.g., a scalar field) that one can loosely interpret as a manifestation of a ``disease''.

\paragraph{Mixed diffeomorphic-elastic model with fixed potential.} We therefore start with an open domain $\Mzero$ of $\R^\dm$ 
and model a deformation $t\mapsto \Mt$.
We first introduce the ``diffeomorphic'' model, which is the foundation of the LDDMM algorithm (for large deformation diffeomorphic metric mapping \citep{beg2005computing}) in shape analysis. In this model, the deformation is tracked through a time-dependent diffeomorphism $\varphi(\cdot)\in W
^{1,1}([0,T],\D^\reg(\R^\dm))$ which is also the flow of a time-dependent vector field $v(\cdot)\in L^1([0,1],V)$ that belongs to our RKHS $V$, so that for all $ t$ in $ [0,T],$ $\Mt=\varphi(t,\Mzero)$, with 
\begin{equation}
\label{eq:flow}
\varphi(0)=\id \quad \text{and}\quad  \ddt{\varphi}(t)=v(t)\circ\varphi(t)
\ \text{almost everywhere.}
\end{equation}
The vector field $v$ is preferably represented in the form $v(\cdot)=K_Vu(\cdot)$ for some $u(\cdot)\in L^1([0,T],V^*)$, which then acts as a control. This control can be left unspecified and estimated as part of an optimal control problem (as done in \citet{beg2005computing,joshi2000landmark,dupuis1998variation,trouve1995approach,arguillere2014shape}), or modeled as an element of some parametrized class of time-dependent linear forms on $V$ \citep{younes2011constrained,younes2014gaussian,gris2018sub-riemannian,younes2020sub-riemannian}. We note that the relation $v(t) = K_V u(t)$ is equivalent to the variational formulation
$$
v(t)=\underset{v' \,\in\, V}{\arg\min}\ \frac{1}{2} \lform{L_V v'}{v'}-(u(t)\mid v').
$$

Alternatively, we may consider $\Mt$ as a deformable solid, with infinitesimal deformation energy quantified by a linear tensor $\elastictensorgeneric(t)$ which is required to be a symmetric, positive semi-definite operator in $ \mathscr{L}(H^1(\Mt,\R^\dm),H^1(\Mt,\R^\dm)^*)$. Now, exert an infinitesimal force density $\jerkgeneric(t)dt$ on $\Mt$ ($\jerkgeneric(t)$ is a time derivative of a force, also called a \textit{\jerkname}). Assuming this \jerkname\ belongs to $H^1(\Mt,\R^\dm)^*$, the infinitesimal deformation $v(t)dt$ that brings $\Mt$ to equilibrium is given, when it exists, by
$$
v(t)=\underset{v' \,\in\, H^1(\Mt,\R^\dm)}{\arg\min}\ \frac{1}{2}(\elastictensorgeneric(t)v'\mid v')-(\jerkgeneric(t)\mid v').
$$
In this paper, following \cite{hsieh2020mechanistic}, we fix a weight $\weight>0$ to combine the LDDMM model and the deformable solid model, and define
\begin{equation}
\label{eq:vf.var}
v(t)=\underset{v' \,\in\, V}{\arg\min}\ \frac{\weight}{2}\lform{L_V v'}{v'} + \frac{1}{2}(\elastictensorgeneric(t)v'\mid v')-(\jerkgeneric(t)\mid v'),
\end{equation}
using $\jerkgeneric \in L^1([0,T],V^*)$ as a control. Here, we make the abuse of notation identifying $\elastictensorgeneric(t)v$ with its restriction to $\Mt$, the assumption that  $V \hookrightarrow C_0^\reg(\mathbb{R}^\dm, \mathbb{R}^\dm)$ ensuring that this restriction  maps $V$ into $H^1(\Mt,\R^\dm)$. Here, the term $(\weight/2)\lform{L_V v'}{v'}$ can be seen as an internal energy causing permanent change to the shape, or simply as a regularization term ensuring the existence of $v(t)\in V$. Indeed, since $\elastictensorgeneric(t)$ is positive semi-definite, $v(t)$ always exists, and belongs to $L^1([0,T],V)$, so that it generates a well-defined flow $\varphi(\cdot)$ on $\R^\dm$.

We point out important differences between the operators $L_V \doteq K_V^{-1}$ (such that $\|v\|_V^2 = \lform{L_V v}{v}$) and $\elastictensorgeneric(t)$. The former, $L_V: V\to V^*$, is defined on a fixed space of vector fields ($V$) themselves defined on the whole ambient space $\R^\dm$. In contrast, $\elastictensorgeneric(t)$ is defined on  $H^1(\Mt,\R^\dm)$, and therefore applies to vector fields defined on $\Mt$. It is, by definition shape dependent. The global nature of $L_V$ (and higher-order assumption insuring the embedding of $V$ in a space of differentiable functions) is of course what makes possible the diffeomorphic property of the evolving flow over all times intervals.

Although we will work with general assumptions on the deformation energy tensor $\elastictensorgeneric$, the 
main example of such a tensor in three dimensions comes from linear elasticity \citep{ciarlet1988three,marsden1994mathematical}. 
Generally, such a tensor $\elastictensorgeneric(t)\in \mathscr{L}(H^1(\Mt,\R^3),H^1(\Mt,\R^3)^*)$ is defined so that
$$
(\elastictensorgeneric(t) v\mid w)=\int_{\Mt}\Lform{\mathscr E_t(x, \epsilon_v(x))}{\epsilon_w(x)}dx,
$$
with $\epsilon_v = (Dv + Dv^T)/2$.
Here, $G^T$ is the transpose of a matrix $G$, and for every $x$, $\mathscr E_t(x)$, is a symmetric positive definite tensor on $3\times 3$ symmetric matrices. In particular, one can favor at each point $x$ of $\Mt$ specific directions of deformations by appropriately choosing $\mathscr E_t(x)$. Examples of such tensors that could be used in applications of our model are those given in \citet[Section 4]{hsieh2020mechanistic}. In the simplest situation, one can assume that the material is homogeneous, isotropic and that its elastic properties are also constant in time, in which case for all $x \in \Mt$: 
\begin{equation}
    \label{eq:elastic_tensor_isotropic}
\Lform{\mathscr E_t(x, \epsilon_v)}{\epsilon_w} = \lambda \, \mathrm{tr}(\epsilon_u) \, \mathrm{tr}(\epsilon_v) + 2\mu \, \mathrm{tr}(\epsilon_u^\top \epsilon_v)
\end{equation}
where $\lambda$ and $\mu$ are known as the Lam\'e parameters of the elastic material.

While not necessarily restricting to elasticity operators such as those above, we will make the additional assumption that $\elastictensorgeneric(t)$ is fully specified by the transformation $\varphi$ (defined by \eqref{eq:flow}) applied to the initial volume. More precisely, we will assume that we are given a mapping
$$
\elastictensor:\D^\reg(\R^\dm)\rightarrow \mathscr{L}\left(H^1(\varphi(\Mzero),\R^\dm),H^1(\varphi(\Mzero),\R^\dm)^*\right),
$$
such that for every diffeomorphism $\varphi,$ $\elastictensor_\varphi$ is a deformation energy tensor on the domain $\varphi(\Mzero)$ and take $\elastictensorgeneric(t) = \elastictensor_{\varphi(t)}$ in \eqref{eq:vf.var}.

\paragraph{Yank model.}
It remains to model a \jerkname\ $\jerkgeneric(\cdot)$ that induces the deformation on $\Mt$. The model considered in \citet{hsieh2020mechanistic} starts with a fixed positive function $g_0\in \C^1(\Mzero,\R)$, with negligible values on the boundary $\partial\Mzero$. In the diseased tissue analogy, this function may be thought of as describing an initial physiological impact of a disease, for example the density of dying cells, or of some protein responsible for tissue remodeling. 
Then, for a deformation $\varphi\in\D^\reg(\R^\dm)$, \citep{hsieh2020mechanistic} defines the corresponding \jerkname\ $\jerk_\varphi$  along $\varphi(\Mzero)$ to be the negative gradient of a some function $\forcefunction:\R_+\rightarrow\R_+ $ of the transported function
$g_0\circ\varphi^{-1}$, so that the \jerkname\ pulls the shape towards  places where $g_0\circ\varphi^{-1}$ is highest.
Formally, this gives:
$$
\forall\ v'\in V,\quad (\jerk_\varphi\mid v')=\int_{\varphi(\Mzero)}\nabla \left[\forcefunction(g_0\circ\varphi^{-1})\right]^Tv'=\int_{\varphi(\Mzero)}\forcefunction(g_0\circ\varphi^{-1})\hspace{1pt}(-\mathrm{div} \,v'),
$$
where the boundary term is negligible thanks to our assumptions on $g_0$. 

The resulting dynamical system uses $\jerkgeneric(t) = \jerk_{\varphi(t)}$, yielding
\begin{equation}
	\label{eq:nodiffusion}
	\left\{
		\begin{array}{l}
			\partial_t \hspace{1pt} \varphi(t, x)
			= 
			v(t, \varphi(t, x)) , \quad 
			\varphi(0, x) = x ,
			\\[5pt]
			\displaystyle
			v(t)
			=
			\underset{v' \,\in\, V}{\arg\min} \ 
			\frac{\weight}{2} \, \lform{L_Vv'}{v'}
			+
			\frac{1}{2} \, (\elastictensor_{\varphi(t)} \hspace{1pt} v' \mid v')
			-
			(\jerk_{\varphi(t)} \mid v'),
			\\[5pt]
			\displaystyle
			(\jerk_{\varphi(t)}\mid v')=\int_{\Mt}\forcefunction(g_0\circ\varphi^{-1}(t))\hspace{1pt}(-\mathrm{div} \,v'),\quad v'\in V,\quad \Mt=\varphi(t)(M_0).
		\end{array}
	\right. 
\end{equation}
It was studied in \cite{hsieh2020mechanistic} and proved to have solutions over arbitrary time intervals. In the same paper, the issue of identifying  $g_0$ among a parametrized family of candidates given the initial state $M_0$ and a deformed state at time $T$,  $M_T=\varphi(T,M_0)$, was also considered.

The assumption that shape change is driven by a strict advection of the function $g_0$ can be seen as overly restrictive, as it does not allow for independent transformations and external factors possibly affecting this function. 
In this paper, we consider a
\textit{reaction-diffusion} equation on the moving domain $M_t$ whose solution also controls the shape motion.
This kind of coupling, as far as we are aware, has not appeared in the literature.

\paragraph{Reaction-diffusion model.} 
Let us start with a fixed domain $U$ in $\R^\dm$, and consider $\priodensity:[0,T]\rightarrow \C^2(U)$. One can think as  $\priodensity(t,x)$ as some measure of the ``density of a disease" at time $t$ and location $x$ with respect to the Lebesgue measure. A reaction-diffusion equation on $\priodensity$ in the fixed domain $U$ is given by
\begin{equation}\label{eq:fixedrd}
\partial_t\priodensity(t,x)=\mathrm{div}\big(\diffusiontensor(t,x) \nabla \priodensity(t,x)\big)+\reaction(\priodensity(t,x)),\quad a.e.\ t\in[0,T],\ x\in U,
\end{equation}
with given initial value $\priodensity(0)=\priodensity_0:U\rightarrow\R$, and the Neumann boundary condition $\diffusiontensor(t,x) \nabla \priodensity(t,x)=0$ for all time $t$ and $x$ in the boundary $\partial U$. It is understood that the gradient $\nabla$ and the divergence are taken with respect to the $x$ coordinates. 

On the right-hand side,  $\diffusiontensor(t,x)$ is a 3-by-3 symmetric positive definite matrix for each $t$ and $x$. For example, for $\diffusiontensor(t,x)=\diffusiontensor=I_3$, the identity matrix, we get $\mathrm{div}(\diffusiontensor\nabla \priodensity(t,x))=\Delta \priodensity(t,x)$ the Laplacian of $\priodensity$. More generally, for a diffusion at $x$ with rate $\diffusioncoef_i>0$ in an $i$-th direction, $i=1,2,3$, we have $$
\diffusiontensor(t,x)=\diffusioncoef_1e_1e_1^T+\diffusioncoef_2e_2e_2^T+\diffusioncoef_3e_3e_3^T=\mathrm{diag}(\diffusioncoef_1,\diffusioncoef_2,\diffusioncoef_3)=(e_1\ e_2\ e_3)\mathrm{diag}(\diffusioncoef_1,\diffusioncoef_2,\diffusioncoef_3)(e_1\ e_2\ e_3)^T
$$
where $e_i$ is a unit vector pointing to the $i$-th direction and $\mathrm{diag}(\diffusioncoef_1,\diffusioncoef_2,\diffusioncoef_3)$ is the diagonal matrix with corresponding entries. 
In this paper, we will work under the following general assumption on $S$ and how it is affected by shape change. We consider a time-dependent field of frames $(t,x)\in[0,T]\times U\mapsto \frame(t,x)\in \gl_3(\R)$ of unit vectors, and let
$$
\diffusiontensor(t,x)=\frame(t,x)\,\mathrm{diag}(\diffusioncoef_1,\diffusioncoef_2,\diffusioncoef_3)\,\frame(t,x)^T.
$$

Finally, $\reaction:\R\rightarrow\R$ is the reaction function,  and models external 
factors affecting the function $\priodensity$. It typically satisfies $R(0) = 0$ so that $p\equiv 0$ is a solution of the PDE initialized with $p_0 \equiv 0$. It may have a sigmoidal shape (such that $R(t) = 0$ if $t\leq 0$, and increases on $[0, +\infty)$ with a finite limit at $+\infty$), which results in a growth/atrophy model in which change accelerates until reaching a limit speed. Alternatively, in order to model a growth/atrophy phase over a finite time interval, $R$ on $[0, +\infty)$ may increase to maximal value before decreasing again to 0 (this is the model chosen in Figure \ref{fig:expl}).  


\paragraph{Integral formulation, and reaction-diffusion on a moving domain.} Equation \eqref{eq:fixedrd} can be written in integral form leading to the weak formulation that we will study specifically. After integrating the equation on a smaller domain, and using the divergence theorem, we can say that the density $(t,x)\mapsto\priodensity(t,x)$ is a solution of Equation \eqref{eq:fixedrd} if and only if, for every domain $U'\subset U$, 
$$
\ddt\int_{U'}\priodensity(t,x)dx = \int_{\partial U'}(\diffusiontensor(t,x)\nabla \priodensity(t,x))^Tn_{\partial U'}(x)d\sigma_{\partial U'}(x) + \int_{U'}\reaction(\priodensity(t,x))dx,
$$
with $n_{\partial U'}$ the outer normal to the boundary of $U'$ and $\sigma_{\partial U'}$ the surface measure on $\partial U'$. 
In other words, the total reduction of $\priodensity$ within $U'$ is equal to the flux of its gradient  along the boundary, modified by the diffusion tensor which takes into account the directions and speed of diffusion. To this is added the total amount created in $U'$ from the reaction $\reaction$.

From our fixed initial volume $\Mzero$, we can give a corresponding formulation for the evolution of a  density on a moving domain $t\mapsto \Mt=\varphi(t)(\Mzero)$, with $\varphi\in H^1([0,T],\D^\reg(\R^\dm))$.  We need, however, to account for changes in the directions of diffusion as the shape is deformed. 

First, we define along each deformation $\varphi(\Mzero)$  of $\Mzero$, with $\varphi\in \D^\reg(\R^\dm)$, a frame $\frame_\varphi$ of unit vectors along $\varphi(\Mzero)$. In other words, $\frame_\varphi$ is a mapping from $\varphi(\Mzero)$ onto $\gl_3(\R)$ whose columns have constant length 1. 
We define a corresponding diffusion tensor $\diffusiontensor_\varphi=\frame_\varphi\,\mathrm{diag}(\diffusioncoef_1,\diffusioncoef_2,\diffusioncoef_3)\frame_\varphi^T.$

Then, we say that the time-dependent  density $\priodensity(t):\Mt=\varphi(t, \Mzero)\rightarrow \R$ with respect to the Lebesgue measure is a solution of the reaction-diffusion equation along the moving domain $t\mapsto\Mt$ if, for every open subset $U_0\subset \Mzero $, and almost every $t$ in $[0,T]$, the equation 
\begin{equation}
\label{eq:weakrdmobile}\begin{aligned}
&\ddt\int_{\varphi(t, \Mzero)}\priodensity(t,x)dx =&&\int_{ \varphi(t, \partial \Mzero)}(\diffusiontensor_{\varphi(t)}(x)\nabla \priodensity(t,x))^Tn_{\varphi(t, \partial \Mzero)}(x)d\sigma_{ \varphi(t, \partial \Mzero)}(x)\\&& +& \int_{\varphi(t, \Mzero)}\reaction(\priodensity(t,x))dx,
\end{aligned}
\end{equation}
is satisfied with Neumann boundary conditions $(\diffusiontensor_{\varphi(t)}(t,x)\nabla \priodensity(t,x))^Tn_{\partial\varphi(t, \Mzero)}(x)=0$ for every $t$ in $[0,T]$ and $x$ in $\partial\Mt=\varphi(t, \partial \Mzero).$

Turning this integral formulation into a pointwise one is difficult, because the support of $\priodensity(t)$ changes as $t$ increases. This results from considering $\priodensity$ in spatial (i.e.,  Eulerian) coordinates. It is easier to deduce the correct PDE for the corresponding density, denoted $\lagdensity$, in material (i.e.,  Lagrangian) coordinates. This density is the pull-back $\varphi(t)^*\priodensity(t)$ of $\priodensity$ through $\varphi(t)$: 
$$ 
\forall f\in L^1(\Mzero),\quad \int_{\Mzero} \lagdensity(t)f dx=\int_{\Mzero} [\varphi(t)^*\priodensity(t)]f dx=\int_{\varphi(t)(\Mzero)}\priodensity(t)f\circ\varphi(t)^{-1}dx.
$$
In other words, $\lagdensity(t)=\priodensity(t)\circ\varphi\,\jac\varphi(t)$, with $\jac\varphi(t)=\det(D\varphi(t))$, the Jacobian of $\varphi(t).$

Note that we get 
$$
\nabla\lagdensity(t)=d\varphi(t)^T\nabla\priodensity(t)\circ\varphi(t)\,\jac\varphi(t)+\lagdensity(t)\frac{\nabla \jac\varphi(t)}{\jac\varphi(t)},
$$ 
so that
\begin{equation}\label{eq:laggradient}
    \nabla\priodensity(t)\circ\varphi(t)=d\varphi(t)^{-T}\left(\nabla\lagdensity(t)-\lagdensity(t)\frac{\nabla \jac\varphi(t)}{\jac\varphi(t)}\right).
\end{equation}

Performing in Equation \eqref{eq:weakrdmobile} the change of variable 
$$
x=\varphi(t,y),
$$
so that
$$
dx=\jac\varphi(t,y)dy,\quad y\in \Mzero,
$$
and
$$
n_{{\varphi(t)}(\partial U_0)}(x)d\sigma_{ {\varphi(t)}(\partial U_0)}(x)
=\jac\varphi(t,y)D\varphi(t,y)^{-T}n_{\partial U_0}(y)d\sigma_{\partial U_0}(y),\quad y\in \partial\Mzero,
$$
we obtain an identity on the fixed domain $U_0$:
$$\begin{aligned}
\ddt\int_{U_0}\lagdensity(t) =&\ \int_{ \partial U_0}\jac\varphi(t)\left[D\varphi^{-1}(t)\diffusiontensor_{\varphi(t)}\circ\varphi(t)D\varphi(t)^{-T}\left(\nabla\lagdensity(t)-\lagdensity\frac{\nabla \jac\varphi(t)}{\jac\varphi(t)}\right)\right]^Tn_{\partial U_0}d\sigma_{U_0}\\ 
&+ \int_{U_0}\reaction\left(\frac{\lagdensity(t)}{\jac\varphi(t)}\right)\jac\varphi(t).
\end{aligned}
$$
Since the pull-back of a vector field $v:x\mapsto v(x)$ by $\varphi(t)$ is $\varphi(t)^* v:x\mapsto D\varphi(t,x)^{-1}v(\varphi(x))$, the pull-back of the frame field $\frame_{\varphi(t)}$ is $\lagframe_{\varphi(t)}=\varphi(t)^*\frame_{\varphi(t)}D\varphi(t)^{-1}\frame_{\varphi(t)}\circ\varphi(t)$, which means that the pull-back of the diffusion tensor is $\newdifftensor_{\varphi(t)}=\varphi(t)^*S_{\varphi(t)}$ and given by
$$
\newdifftensor_{\varphi(t)}=D\varphi^{-1}(t)\diffusiontensor_{\varphi(t)}\circ\varphi(t)D\varphi(t)^{-T}.
$$
Note that this formula is valid even when replacing $\varphi(t)$ by any diffeomorphism $\phi\in \D^\reg(\R^\dm)$. 

With this new notation, the integral equation reads
$$
\ddt\int_{\Mzero}\lagdensity(t) =\ \int_{ \partial \Mzero}\jac\varphi(t)\left[\newdifftensor_{\varphi(t)}\left(\nabla\lagdensity(t)-\lagdensity(t)\frac{\nabla \jac\varphi(t)}{\jac\varphi(t)}\right)\right]^Tn_{\partial U_0}d\sigma_{U_0}
+ \int_{\Mzero}\reaction\left(\frac{\lagdensity(t)}{\jac\varphi(t)}\right)\jac\varphi(t).
$$
The boundary conditions are then 
$$
\left[\newdifftensor_{\varphi(t)}(x)\left(\nabla\lagdensity(t)-\lagdensity(t)\frac{\nabla \jac\varphi(t)}{\jac\varphi(t)}\right)\right]^Tn_{\partial \Mzero}(x) = 0 ,\quad t\in[0,T],\ x\in \partial\Mzero.
$$

From there, the divergence theorem yields
$$
\ddt\int_{\Mzero}\lagdensity(t,x)dx =\int_{\Mzero}\mathrm{div}\left(\jac\varphi(t)\,\newdifftensor_{\varphi(t)}\left(\nabla\lagdensity(t)-\lagdensity(t)\frac{\nabla \jac\varphi(t)}{\jac\varphi(t)}\right)\right)dx+\int_{\Mzero}\reaction\left(\frac{\lagdensity(t)}{\jac\varphi(t)}\right)\jac\varphi(t)dx,
$$
with boundary condition 
$$
\left[\newdifftensor_{\varphi(t)}(t,x)\nabla\lagdensity(t,x)\right]^T n_0(x)=0,\quad t\in[0,T],\ x\in \partial\Mzero.
$$
Since this should be true for every open $U_0\subset \Mzero$, we get the PDE
\begin{equation}
\label{eq:strongrdmobile}
\ddt\lagdensity(t,x)=\mathrm{div}\left(\jac\varphi(t,x)\,\newdifftensor_{\varphi(t,x)}\left(\nabla\lagdensity(t,x)-\lagdensity(t,x)\frac{\nabla \jac\varphi(t,x)}{\jac\varphi(t,x)}\right)\right)+\reaction\left(\frac{\lagdensity(t,x)}{\jac\varphi(t,x)}\right)\jac\varphi(t,x),
\end{equation}
with boundary condition
$$
\left[\newdifftensor_\varphi(t,x)\left(\nabla\lagdensity(t,x)-\lagdensity(t,x)\frac{\nabla \jac\varphi(t,x)}{\jac\varphi(t,x)}\right)\right]n_0(x)=0, \quad (t,x)\in[0,T]\times \partial\Mzero.$$

\paragraph{PDE-controlled diffeomorphic equation and main result.} Combining the various paragraphs of this section, we obtain a formulation of our model. We start by redefining our various functions and operators.

We fix an initial domain $\Mzero\subset \R^\dm$ and diffusion speeds $\diffusioncoef_1,\diffusioncoef_2,\diffusioncoef_3>0$. For every $\varphi\in\D^\reg(\R^\dm)$, we define
\begin{itemize}
    \item A frame field of unit vectors $\frame_\varphi:\varphi(\Mzero)\rightarrow \gl_3(\R)$ along $\varphi(\Mzero)$, and the corresponding field in spatial coordinates $\lagframe_\varphi=D\varphi^{-1}\frame_\varphi\circ\varphi$.
    \item A diffusion tensor $\diffusiontensor_\varphi:\varphi(\Mzero)\rightarrow M_3(\R),$ with $\diffusiontensor_\varphi(x)=\frame_\varphi\mathrm{diag}(\diffusioncoef_1,\diffusioncoef_2,\diffusioncoef_3)\frame_\varphi^T$, a symmetric positive definite matrix at each point. The corresponding operator in Lagrangian coordinates is $\newdifftensor_\varphi=D\varphi^{-1}\diffusiontensor\circ\varphi\, D\varphi^{-T}.$
    \item A symmetric, positive-definite tensor $\elastictensor_\varphi\in \mathcal{L}(H^1(\varphi(\Mzero),\R^\dm),H^1(\varphi(\Mzero),\R^\dm)^*)$.
\end{itemize}
The PDE-controlled diffeomorphic model with initial condition  $\lagdensity_0:\Mzero\rightarrow\R$ is the system of coupled equations on $\varphi:[0,T]\mapsto\D^\reg(\R^\dm)$ and $\lagdensity:[0,T]\times \Mzero\rightarrow\R$ that follows: for all $(t,x)\in [0,T]\times\Mzero$,
\begin{equation}
    \label{eq:PDELDDMM}
    \left\lbrace
    \begin{aligned}
    &\ddt\lagdensity(t,x)= \mathrm{div}\left(\jac\varphi(t,x)\,\newdifftensor_{\varphi(t,x)}\left(\nabla\lagdensity(t,x)-\lagdensity(t,x)\frac{\nabla \jac\varphi(t,x)}{\jac\varphi(t,x)}\right)\right)+\reaction\left(\frac{\lagdensity(t,x)}{\jac\varphi(t,x)}\right)\jac\varphi(t,x),
    \\
    &\ddt \varphi(t,x)
	= 
	v(t,\varphi(t,x) ),
	\\
	&v(t)=\underset{v' \,\in\, V}{\arg\min} \ \frac{\weight}{2} \, \lform{L_V v'}{v'}
	+
	\frac{1}{2} \, (\elastictensor_{\varphi(t)} \hspace{1pt} v' \mid v')
	-
	(\jerk(t) \mid v'),
	\\
	&(\jerk(t)\mid v')=\int_{\Mt} \forcefunction(\priodensity(t))\hspace{1pt}(-\mathrm{div} \,v'),\quad v'\in V,\quad \Mt=\varphi(t)(M_0),\quad t\in[0,T],
    \end{aligned}\right.
\end{equation}
where $ \priodensity(t)=\lagdensity\circ\varphi(t)^{-1}/\jac \varphi(t)$, with boundary conditions
\begin{equation}
\label{eq:PDELDDMM_bd_conditions}
\left\lbrace
\begin{aligned}
&\left(\newdifftensor_\varphi(t,x)\left(\nabla\lagdensity(t,x)-\lagdensity(t,x)\frac{\nabla \jac\varphi(t,x)}{\jac\varphi(t,x)}\right)\right)^Tn_0(x)=0,&\quad (t,x)\in [0,T]\times \partial \Mzero\\
&\lagdensity(0,x)=\priodensity(0,x)=\priodensity_0(x)&\quad x\in \Mzero\\
&\varphi(0,x) = x,&\quad x\in \Mzero
\end{aligned}
\right.
\end{equation}
By a solution of the above system of differential equations and boundary conditions, we mean a couple $(\varphi,\lagdensity) \in H^1([0,T],\D^\reg(\R^\dm)) \times L^2([0,T],H^1(\Mzero))$ such that $\lagdensity$ is a weak solution of the reaction-diffusion PDE (c.f., next section for the precise definition) with the two first boundary conditions in \eqref{eq:PDELDDMM_bd_conditions} and, for almost all $t\in[0,T]$, $\varphi$ verifies the last three equations in \eqref{eq:PDELDDMM} with the last boundary condition in \eqref{eq:PDELDDMM_bd_conditions}. Our main result is the following existence and uniqueness of the solution under adequate assumptions: 

\begin{theorem}
\label{thm:main}
Assume that $V \hookrightarrow C_0^{\reg+1}(\mathbb{R}^\dm, \mathbb{R}^\dm)$ with $\reg\geq2$, that $\reaction$ and $\forcefunction$ are Lipshitz and bounded, and that on every bounded subset $B$ of $\D^\reg(\R^\dm)$ for the distance $d_{\reg,\infty}$, we have:
\begin{enumerate}
    \item The linear tensor $\varphi\mapsto \elastictensor_\varphi$ is Lipshitz on $B$.
    \item $\Vert \frame_\varphi\Vert_\infty=\sup_{x\in\varphi(\Mzero)}\Vert\frame_\varphi(x)^{-1}\Vert$ is bounded on $B$.
\end{enumerate}
Then, for every $\priodensity_0$ in $L^2(\Mzero),$ there is a unique solution $(\varphi,\lagdensity) \in H^1([0,T],\D^\reg(\R^\dm)) \times L^2([0,T],H^1(\Mzero))$ to \eqref{eq:PDELDDMM}.
\end{theorem}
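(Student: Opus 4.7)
I plan to set up a Picard contraction on the flow map $\varphi$. For fixed $T_\ast\in(0,T]$ and $K>0$, let $\mathcal K_{T_\ast,K}$ denote the closed subset of $H^1([0,T_\ast],\D^\reg(\R^\dm))$ consisting of trajectories $\varphi$ with $\varphi(0)=\id$ and $\sup_{t\in[0,T_\ast]}d_{\reg,\infty}(\varphi(t),\id)\leq K$. I define a map $\Phi$ on $\mathcal K_{T_\ast,K}$ as follows: given $\varphi$, (i) solve the Lagrangian reaction-diffusion equation \eqref{eq:strongrdmobile} on the fixed domain $\Mzero$ via Theorem \ref{thm:abstract_parabolic_ivp}, producing a unique $\lagdensity\in L^2([0,T_\ast],H^1(\Mzero))$; (ii) form $\priodensity(t)=\lagdensity(t)\circ\varphi(t)^{-1}/\jac\varphi(t)$, then compute the yank $\jerk(t)\in V^*$ and the velocity $v(t)\in V$ via the variational formula in \eqref{eq:PDELDDMM}; (iii) integrate $\partial_t\varphi'(t)=v(t)\circ\varphi'(t)$ with $\varphi'(0)=\id$, producing $\Phi(\varphi)=\varphi'$. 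A fixed point of $\Phi$ is, by construction, a solution of \eqref{eq:PDELDDMM}--\eqref{eq:PDELDDMM_bd_conditions}.

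\textbf{Well-posedness of $\Phi$.} Rewriting the PDE in step (i) as $\partial_t\lagdensity+\parabolic_\varphi\lagdensity=f_\varphi(\lagdensity)$ with $\parabolic_\varphi\lagdensity=-\mathrm{div}(\jac\varphi\,\newdifftensor_\varphi\nabla\lagdensity)+\mathrm{div}(\newdifftensor_\varphi\lagdensity\nabla\jac\varphi)$ and $f_\varphi(\lagdensity)=\reaction(\lagdensity/\jac\varphi)\jac\varphi$, the principal part is coercive: the assumption on $\frame_\varphi$ combined with $\varphi\in\mathcal K_{T_\ast,K}$ yields uniform positive lower and upper eigenvalue bounds on $\jac\varphi\,\newdifftensor_\varphi$ in $t$ and $x$. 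The embedding $V\hookrightarrow\C_0^{\reg+1}$ with $\reg\geq 2$ gives $\jac\varphi\in C^1$, so $\parabolic_\varphi$ is bounded from $L^2([0,T_\ast],H^1(\Mzero))$ to $L^2([0,T_\ast],H^1(\Mzero)^*)$. Since $\reaction$ is bounded and Lipschitz, $f_\varphi$ is Lipschitz in $\lagdensity$ and bounded in $L^\infty$; an inner Banach iteration (or a direct extension of Theorem \ref{thm:abstract_parabolic_ivp} to a Lipschitz right-hand side) produces the unique $\lagdensity$ with $\partial_t\lagdensity\in L^2([0,T_\ast],H^1(\Mzero)^*)$, and Theorem \ref{thm:u_dtu_continuous} provides $\lagdensity\in C([0,T_\ast],L^2(\Mzero))$. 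For step (ii), boundedness of $\forcefunction$ gives a uniform bound on $\|\jerk(t)\|_{V^*}$; the Euler equation $\weight L_V v+\elastictensor_{\varphi(t)}v=\jerk(t)$ tested against $v$ together with positivity of $\elastictensor_{\varphi(t)}$ yields $\|v(t)\|_V\leq \|\jerk(t)\|_{V^*}/\weight$. Step (iii) uses this velocity and the embedding $V\hookrightarrow\C_0^{\reg+1}$ to produce $\varphi'\in H^1([0,T_\ast],\D^\reg(\R^\dm))$ by standard flow theory.

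\textbf{Contraction and global extension.} Taking two inputs $\varphi_1,\varphi_2\in\mathcal K_{T_\ast,K}$, I chain the following Lipschitz estimates. First, stability of the parabolic PDE in its coefficients (obtained by subtracting the two equations, testing against $\lagdensity_1-\lagdensity_2$, applying Lemma \ref{lemma:dt_norm_squared}, coercivity, and Gr\"onwall) produces $\|\lagdensity_1-\lagdensity_2\|_{L^2([0,T_\ast],H^1(\Mzero))}\leq Ce^{CT_\ast}\sup_t d_{\reg,\infty}(\varphi_1(t),\varphi_2(t))$, where the Lipschitz dependence of the coefficients on $\varphi$ is enabled by $\reg\geq 2$. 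Then the Lipschitz dependence of $\jerk$ on $(\varphi,\lagdensity)$ (via $\forcefunction$ Lipschitz and change of variables), of $v$ on $(\varphi,\jerk)$ (using the Lipshitz assumption on $\elastictensor$ and re-running the coercivity argument on the difference of variational problems), and standard Lipschitz dependence of flows on their driving vector fields together give $\sup_t d_{\reg,\infty}(\Phi(\varphi_1)(t),\Phi(\varphi_2)(t))\leq C\sqrt{T_\ast}\,e^{CT_\ast}\sup_t d_{\reg,\infty}(\varphi_1(t),\varphi_2(t))$. Choosing $T_\ast$ small enough produces a unique fixed point on $[0,T_\ast]$. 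To extend to $[0,T]$, I note that $\|v(t)\|_V$ admits an a priori bound depending only on $\|\forcefunction\|_\infty$, $|\Mzero|$, $\|\jac\varphi(t)\|_\infty$ and $\weight$; standard flow estimates (see e.g.\ \citet{bruveris2016completeness}) then yield a Gr\"onwall-type bound on $d_{\reg,\infty}(\varphi(t),\id)$ uniform in $t\in[0,T]$, which prevents finite-time blow-up so that the local-existence argument can be iterated across the whole interval. Uniqueness follows by applying the same chain of estimates to any two solutions and invoking Gr\"onwall.

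\textbf{Main obstacle.} The delicate step is the parabolic stability estimate in the first link of the chain above: the principal coefficient $\jac\varphi\,\newdifftensor_\varphi$, the lower-order coefficient $\newdifftensor_\varphi\nabla\jac\varphi$, and the reaction term $\reaction(\lagdensity/\jac\varphi)\jac\varphi$ all depend nonlinearly on $\varphi$ through $D\varphi$ and $\varphi^{-1}$. One must identify a norm on $\varphi$ in which they are jointly Lipschitz and perform the energy estimate on $\lagdensity_1-\lagdensity_2$ so as to absorb the resulting perturbation into the coercivity of the principal part. The hypothesis $\reg\geq 2$ together with $V\hookrightarrow\C_0^{\reg+1}$ is precisely what is needed to differentiate the coefficients sufficiently many times and close the energy estimate; the remaining steps are routine book-keeping given the stated assumptions and the tools borrowed from \citet{magenes1972non}.
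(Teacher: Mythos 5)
Your proposal follows essentially the same route as the paper: an outer Banach fixed point on the flow $\varphi$ whose iteration solves the Lagrangian reaction--diffusion PDE for that fixed $\varphi$ (itself via the abstract parabolic theory of Theorem \ref{thm:abstract_parabolic_ivp} plus an inner fixed point handling the Lipschitz reaction term), computes the yank and the velocity from the regularized variational problem with the bound $\|v(t)\|_V\le\|\jerk(t)\|_{V^*}/\weight$, and integrates the flow, with the contraction obtained by chaining exactly the Lipschitz stability estimates you list and global existence following from the uniform yank bound and Gr\"onwall. The one device you gloss over is that the drift term involving $\newdifftensor_{\varphi}\,\lagdensity\,\nabla\jac\varphi/\jac\varphi$ destroys coercivity of the full operator (only the principal part is coercive), which the paper restores by the substitution $\lagdensitybis=e^{-\lambda t}\lagdensity$ (a G{\aa}rding-type shift) before invoking Theorem \ref{thm:abstract_parabolic_ivp}; this is precisely the absorption you anticipate in your final paragraph, so the approaches coincide.
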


Most of the rest of the paper is devoted to the proof of this result, decomposed into the following steps. In section \ref{sec:preliminary}, we fix a time-dependent deformation $t\mapsto \varphi(t)$ and show the local weak existence and uniqueness of solutions to the reaction-diffusion equation on the corresponding moving domain. Then in section \ref{sec:theorem_proof}, we derive a number of necessary estimates on $\varphi$ which, combined with  section \ref{sec:preliminary}, lead to the result of Theorem \ref{thm:main} by a fixed point argument.

\section{Analysis with prescribed moving domain}
\label{sec:preliminary}
Before studying the fully coupled system \eqref{eq:PDELDDMM}, we will first restrict to the simpler situation of a reaction-diffusion equation on a moving domain but for which the deformation is fixed and prove preliminary results of local and global existence of weak solutions for this case. Note that, in the Lagrangian formulation we consider, this amounts in a system of reaction-diffusion equations with time-dependent diffusion tensor and boundary condition for which several existence and regularity results have been showed in the past, see e.g. \cite{ladyvzenskaja1988linear,burdzy2004heat,goudon2010regularity}. These are however derived with slightly different settings and sets of assumptions than in the present work and thus, for the sake of completeness, we provide detailed proofs of the weak existence results as well as bounds on the solutions that we will need for the proof of our main theorem.

\subsection{Weak existence for the reaction-diffusion PDE on a moving domain}
\label{ssec:local_existence_reaction_diffusion}
In all this section, we assume that $\reg\geq 2$ and we slightly extend our general notation. We let $t_0$ denote the initial time and take $\eta >0$, $[t_0,t_0+\eta]\subset [0,T]$. We assume that an initial  deformation $\varphi_{t_0} \in \D^\reg(\R^\dm)$ is given at $t_0$, together with a time-dependent deformation $\varphi \in H^1([t_0,t_0+\eta],\D^\reg(\R^\dm))$ with $\varphi(t_0) = \id$ (so that $\varphi_{t_0}$ and $\varphi(t_0)$ denote different objects). Both $\varphi_{t_0}$ and $\varphi$ are assumed to be fixed in this section. For convenience, we shift the reference domain to $\Mtzero = \varphi_{t_0}(\Mzero)$. The diffusion tensor $\newdifftensor_{\varphi}$ is then fully specified and given by, for all $t\in [t_0,t_0+\eta]$ and all $x \in \Mtzero$:
\begin{equation}
\label{eq:syst.with.f}
	\newdifftensor_{\varphi}(t,x)
	=
	(D\varphi(t,x))^{-1} \ 
	\left( \vphantom{\sum} F_{\varphi(t) \,\circ\, \varphi_{t_0}} \circ \varphi(t,x) \right) \,
	\mathrm{diag}(\diffusioncoef_1, \diffusioncoef_2, \diffusioncoef_3) \,
	\left( \vphantom{\sum} F_{\varphi(t) \,\circ\, \varphi_{t_0}} \circ \varphi(t,x) \right)^\top
	D\varphi(t,x)^{-\top}
\end{equation}

\subsubsection{Preliminary results}
As a first step we consider the simplified setting in which the reaction term is replaced by a time-dependent function $f(t)$, introducing the following system:
\begin{equation}
	\left\{
		\begin{array}{ll}
			\displaystyle
			\ddt\lagdensity(t,x)
			=
			\mathrm{div}\left(\jac\varphi(t,x) \newdifftensor_{\varphi(t)}(t,x)\left(\nabla\lagdensity(t,x)-\lagdensity(t,x)\frac{\nabla \jac\varphi(t,x)}{\jac\varphi(t,x)}\right)\right)
			+
			f(t) 
			\\[10pt]
			\left[\newdifftensor_\varphi(t,x)\left(\nabla\lagdensity(t,x)-\lagdensity(t,x)\frac{\nabla \jac\varphi(t,x)}{\jac\varphi(t,x)}\right)\right]^Tn_0(x)=0,\quad (t,x)\in [t_0,t_0+\eta]\times \partial \Mtzero\\[10pt]
            \lagdensity(t_0,x)= p_{t_0}(x),\quad x\in \Mtzero
		\end{array}
	\right.
	\label{eq:diffusion_ivp}
\end{equation}
We will assume that $f\in L^2([t_0, t_0+\eta], H_1(M_{t_0})^*)$ and rewrite \eqref{eq:diffusion_ivp} in a weak form. We will look for a solution $\lagdensity \in L^2([t_0, t_0+\eta], H_1(M_{t_0}))$. 
Introduce the operator
\[
\mathcal L_{\varphi, \, 0} : L^2([t_0, t_0 + \eta], H^1(M_{t_0})) \rightarrow L^2([t_0, t_0 + \eta], H^1(M_{t_0})^*)
\]
defined by
\[
	(\mathcal{L}_{\varphi, \, 0} \hspace{1pt} \testfun_1 \mid \testfun_2)
	=
	\int_{t_0}^{t_0 + \eta}
	\left(
		\vphantom{\frac{1}{2}}
		\langle \newdifftensor_{\varphi(t)} \, \nabla \testfun_1(t), \nabla \testfun_2(t) \rangle_{L^2}
	-
		\left\langle \testfun_1(t) \, \newdifftensor_{\varphi(t)} \, \frac{\nabla \jac\varphi(t,x)}{\jac\varphi(t,x)}, \nabla \testfun_2(t) \right\rangle_{L^2}
	\right)
	dt
	\]
With this notation, the first equation in \eqref{eq:diffusion_ivp} can be rewritten as
$\wddt \lagdensity + \mathcal L_{\varphi, \, 0}\, \lagdensity = f$ (recall that the notation $\wddt\lagdensity$ refers to the weak derivative of $\lagdensity$ with respect to time)
and the second one is automatically derived from identifying boundary terms after  integration by parts. This yields the new formulation
\begin{equation}
    \label{eq:diffusion_ivp_short}
    \left\{
    \begin{aligned}
    & \wddt \lagdensity + \mathcal L_{\varphi, \, 0}\, \lagdensity = f\\
    & \lagdensity(t_0, x) = p_{t_0}(x)
    \end{aligned}
    \right.
\end{equation}
Note that the first equation implies, in particular, that $\wddt \lagdensity\in L^2([t_0, t_0+\eta], H_1(M_{t_0})^*)$), and Theorem \ref{thm:u_dtu_continuous} ensures that prescribing an initial condition at $t=t_0$ is meaningful. 
For technical reasons, it will be convenient to make the change of function $\lagdensitybis(t,x) = e^{-\lambda t} \lagdensity(t,x)$ (for some $\lambda >0$ to be specified later) and rewrite \eqref{eq:diffusion_ivp_short} in terms of $\lagdensitybis$, yielding:
\begin{equation}
	\left\{
		\begin{array}{ll}
		
			\displaystyle
			\wddt \lagdensitybis(t,x) + \mathcal L_{\varphi, \, \lambda}\, \lagdensitybis = e^{-\lambda t}f(t) 
			\\[5pt]
             \lagdensitybis(t_0,x)=e^{-\lambda t_0} p_{t_0}(x),\quad x\in \Mtzero
		\end{array}
	\right.
	\label{eq:diffusion_ivp2}
\end{equation}
where $\mathcal L_{\varphi, \, \lambda}: L^2([t_0, t_0 + \eta], H^1(M_{t_0})) \rightarrow L^2([t_0, t_0 + \eta], H^1(M_{t_0})^*)$ is defined by $\mathcal{L}_{\varphi, \, \lambda} \hspace{1pt} \testfun = \lambda \testfun + \mathcal{L}_{\varphi, \, 0} \hspace{1pt} \testfun$.
With this notation and these assumptions, we can now state the main result of this section:
\begin{proposition}
\label{thm:reaction_diffusion_fixed_phi_no_R}
With the assumptions above, for all $\priodensity_{t_0} \in L^2(M_{t_0})$, the system~\eqref{eq:diffusion_ivp2} has a unique weak solution on $[t_0,t_0+\eta]$.
\end{proposition}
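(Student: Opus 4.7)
The strategy is to apply Theorem \ref{thm:abstract_parabolic_ivp} to the rescaled system \eqref{eq:diffusion_ivp2}. The role of the change of unknown $\lagdensitybis = e^{-\lambda t}\lagdensity$ is precisely to inject an extra $\lambda\,\lagdensitybis$ in the zeroth order part of the operator, which will allow us to absorb the ``lower order'' cross term appearing in $\mathcal L_{\varphi,0}$ and thus obtain coercivity for $\lambda$ large enough. Once $\lagdensitybis \in L^2([t_0,t_0+\eta],H^1(\Mtzero))$ is obtained from the abstract theorem, the original $\lagdensity = e^{\lambda t}\lagdensitybis$ yields the desired weak solution, and uniqueness is inherited from the abstract framework.

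The first step is to verify that $\mathcal L_{\varphi,\lambda}$ is indeed a bounded linear map from $L^2([t_0,t_0+\eta],H^1(\Mtzero))$ into its dual. Since $\varphi\in H^1([t_0,t_0+\eta],\D^\reg(\R^\dm))$ with $\reg\geq2$ and $\varphi(t_0)=\id$, the Sobolev embedding $H^1\hookrightarrow C^0$ in the time variable together with the completeness of $d_{\reg,\infty}$ (applied also to $\varphi^{-1}$, which stays continuous since $\D^\reg(\R^\dm)$ is a topological group) ensures that the family $\{\varphi(t):t\in[t_0,t_0+\eta]\}$ sits in a bounded set for $d_{\reg,\infty}$. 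Combined with the assumption that $\|\frame_\psi\|_\infty$ and $\|\frame_\psi^{-1}\|_\infty$ are bounded on bounded sets, this gives uniform $L^\infty$ bounds on $\newdifftensor_{\varphi(t)}$, $\jac\varphi(t)$, $\jac\varphi(t)^{-1}$, and (since $\reg\geq2$) on $\nabla\jac\varphi(t)$. Boundedness of $\mathcal L_{\varphi,\lambda}$ then follows from Cauchy--Schwarz applied term by term in the definition of the bilinear form, and $e^{-\lambda t}f$ of course remains in $L^2([t_0,t_0+\eta],H^1(\Mtzero)^*)$.

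The main step, and the only real obstacle, is coercivity. Testing against $\testfun_1=\testfun_2=\testfun$ gives
\[
(\mathcal L_{\varphi,\lambda}\,\testfun\mid \testfun)=\int_{t_0}^{t_0+\eta}\Big(\scp{\newdifftensor_{\varphi(t)}\nabla\testfun(t)}{\nabla\testfun(t)}_{L^2}-\Scp{\testfun(t)\,\newdifftensor_{\varphi(t)}\tfrac{\nabla\jac\varphi(t)}{\jac\varphi(t)}}{\nabla\testfun(t)}_{L^2}+\lambda\|\testfun(t)\|_{L^2}^2\Big)\,dt.
\]
Uniform positive-definiteness of $\newdifftensor_{\varphi(t)}$ (which again relies on the bound on $\|\frame_\varphi^{-1}\|_\infty$ on bounded sets and on the $\diffusioncoef_i>0$) yields a constant $c_0>0$ with $\scp{\newdifftensor_{\varphi(t)}\nabla\testfun}{\nabla\testfun}\geq c_0\|\nabla\testfun\|_{L^2}^2$, while the uniform bounds discussed above give a constant $C>0$ controlling the pointwise norm of $\newdifftensor_{\varphi(t)}\tfrac{\nabla\jac\varphi(t)}{\jac\varphi(t)}$. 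A Young inequality of the form $|ab|\leq\tfrac{\epsilon}{2}a^2+\tfrac{1}{2\epsilon}b^2$ applied to the cross term lets us bound it in absolute value by $\tfrac{\epsilon}{2}\|\nabla\testfun\|_{L^2}^2+\tfrac{C^2}{2\epsilon}\|\testfun\|_{L^2}^2$. Choosing $\epsilon=c_0$ and then $\lambda$ larger than $\tfrac{C^2}{2c_0}+\tfrac{c_0}{2}$ gives
\[
(\mathcal L_{\varphi,\lambda}\,\testfun\mid \testfun)\geq \tfrac{c_0}{2}\int_{t_0}^{t_0+\eta}\|\nabla\testfun(t)\|_{L^2}^2\,dt+\tfrac{c_0}{2}\int_{t_0}^{t_0+\eta}\|\testfun(t)\|_{L^2}^2\,dt\geq \alpha\int_{t_0}^{t_0+\eta}\|\testfun(t)\|_{H^1(\Mtzero)}^2\,dt
\]
with $\alpha=c_0/2$. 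Coercivity being verified, Theorem \ref{thm:abstract_parabolic_ivp} applies with $f$ replaced by $e^{-\lambda t}f(t)$ and initial datum $e^{-\lambda t_0}\priodensity_{t_0}\in L^2(\Mtzero)$, giving a unique $\lagdensitybis\in L^2([t_0,t_0+\eta],H^1(\Mtzero))$; undoing the exponential change of variable yields the desired unique weak solution of \eqref{eq:diffusion_ivp2}.
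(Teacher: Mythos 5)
Your proof is correct and follows essentially the same route as the paper: establish boundedness and coercivity of $\mathcal{L}_{\varphi,\lambda}$ for $\lambda$ large enough (using the uniform bounds on $\newdifftensor_{\varphi(t)}$, its inverse, and $\nabla\jac\varphi/\jac\varphi$ coming from $\varphi\in C([t_0,t_0+\eta],\D^\reg(\R^\dm))$ and the frame assumption, then absorbing the cross term by Young's inequality), and invoke Theorem~\ref{thm:abstract_parabolic_ivp}. The only cosmetic difference is that you feed the nonzero initial datum $e^{-\lambda t_0}\priodensity_{t_0}$ directly into the abstract theorem, whereas the paper first treats zero initial data (Lemma~\ref{thm:diffusion_nonhomogeneous_initialZero}) and then lifts to general $L^2$ data via a trace/lifting argument (Corollary~\ref{cor:diffusion_nonhomogeneous}); since Theorem~\ref{thm:abstract_parabolic_ivp} as stated already admits $u_0\in L^2(\Omega)$, your shortcut is legitimate.
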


\medskip

We first address the case of an homogeneous initial condition with the following lemma.
\begin{lemma}
\label{thm:diffusion_nonhomogeneous_initialZero}
Suppose that the frame field satisfies
\[
	\sup_{t \,\in\, [t_0, \, t_0 + \eta]} \|F_{\varphi(t) \,\circ\, \varphi_{t_0}}^{-1}\|_\infty < \infty .
\]
Then there exists $\lambda(\varphi) > 0$ such that for any $g\in L^2([t_0, t_0 + \eta], H^1(M_{t_0})^*))$, the problem 
\begin{equation*}
	\left\{
		\begin{array}{ll}
			\wddt \lagdensitybis + \mathcal{L}_{\varphi, \, \lambda} \lagdensitybis = g\\
			\lagdensitybis(t_0) = 0
		\end{array}
	\right.
\end{equation*}
has a unique weak solution $\lagdensitybis$ that belongs to $L^2([t_0, t_0 + \eta], H^1(M_{t_0}))$.
\end{lemma}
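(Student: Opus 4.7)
The plan is to reduce this to Theorem \ref{thm:abstract_parabolic_ivp} applied with $\parabolic = \mathcal{L}_{\varphi,\lambda}$, source $g$, and initial datum $0$. This requires checking that $\mathcal{L}_{\varphi,\lambda}$ is a bounded linear map from $L^2([t_0,t_0+\eta],H^1(M_{t_0}))$ into its dual, and that for $\lambda$ large enough it is coercive. Boundedness is a mostly routine Cauchy--Schwarz computation that exploits the regularity of $\varphi$: since $\varphi \in H^1([t_0,t_0+\eta],\D^\reg(\R^\dm))$ with $\reg \geq 2$, the fields $D\varphi(t)$, $D\varphi(t)^{-1}$, $\jac\varphi(t)$ and $\nabla\jac\varphi(t)$ are all in $L^\infty$, with $1/\jac\varphi(t)$ bounded as well. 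Combined with the assumption $\sup_t \|F_{\varphi(t)\circ\varphi_{t_0}}^{-1}\|_\infty < \infty$, this gives uniform $L^\infty$ bounds on $\newdifftensor_{\varphi(t)}$ and on $\nabla\jac\varphi(t)/\jac\varphi(t)$, from which boundedness of $\mathcal{L}_{\varphi,\lambda}$ follows.

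The main obstacle is coercivity, and this is where I expect the assumption on the frame field to play its central role. The frame-based tensor $F\,\mathrm{diag}(\diffusioncoef_1,\diffusioncoef_2,\diffusioncoef_3)\,F^\top$ has smallest eigenvalue bounded below by $\min(\diffusioncoef_i)/\|F^{-1}\|_\infty^2$, so our assumption yields a uniform positive lower bound for the eigenvalues of this tensor along $\varphi(t)\circ\varphi_{t_0}$. Conjugating by $D\varphi(t)^{-1}$, whose operator norm is also uniformly bounded, produces a constant $\beta > 0$ such that for every $t \in [t_0,t_0+\eta]$ and every $\xi \in \R^\dm$,
\[
\langle \newdifftensor_{\varphi(t)}(x)\,\xi,\xi\rangle \geq \beta\,|\xi|^2,\quad x \in M_{t_0}.
\]
The first term of $(\mathcal{L}_{\varphi,0}\,\testfun \mid \testfun)$ is therefore bounded below by $\beta\int_{t_0}^{t_0+\eta}\|\nabla \testfun(t)\|_{L^2(M_{t_0})}^2\,dt$.

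For the cross term, denoting $M \colonequals \sup_t \|\newdifftensor_{\varphi(t)}\|_\infty \cdot \|\nabla\jac\varphi(t)/\jac\varphi(t)\|_\infty$, I apply Cauchy--Schwarz pointwise in $x$, then Young's inequality $ab \leq \frac{\beta}{2}a^2 + \frac{1}{2\beta}b^2$, obtaining
\[
\left|\left\langle \testfun(t)\,\newdifftensor_{\varphi(t)}\,\tfrac{\nabla\jac\varphi(t)}{\jac\varphi(t)},\nabla\testfun(t)\right\rangle_{L^2}\right| \leq \frac{\beta}{2}\|\nabla\testfun(t)\|_{L^2}^2 + \frac{M^2}{2\beta}\|\testfun(t)\|_{L^2}^2.
\]
Combining and choosing $\lambda(\varphi) \geq \frac{M^2}{2\beta} + \frac{\beta}{2}$ yields
\[
(\mathcal{L}_{\varphi,\lambda}\,\testfun \mid \testfun) \geq \frac{\beta}{2}\int_{t_0}^{t_0+\eta}\left(\|\nabla\testfun(t)\|_{L^2}^2 + \|\testfun(t)\|_{L^2}^2\right)dt = \frac{\beta}{2}\int_{t_0}^{t_0+\eta}\|\testfun(t)\|_{H^1(M_{t_0})}^2\,dt,
\]
i.e. coercivity with constant $\alpha = \beta/2$.

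With boundedness and coercivity in hand, Theorem \ref{thm:abstract_parabolic_ivp} produces a unique $\lagdensitybis \in L^2([t_0,t_0+\eta],H^1(M_{t_0}))$ solving $\wddt\lagdensitybis + \mathcal{L}_{\varphi,\lambda}\lagdensitybis = g$ with $\wddt\lagdensitybis \in L^2([t_0,t_0+\eta],H^1(M_{t_0})^*)$; Theorem \ref{thm:u_dtu_continuous} then ensures that $\lagdensitybis \in C([t_0,t_0+\eta],L^2(M_{t_0}))$ so that the condition $\lagdensitybis(t_0) = 0$ is meaningful and is enforced by the cited theorem. The technical crux of the argument is thus entirely the uniform ellipticity estimate for $\newdifftensor_\varphi$, which is precisely where the hypothesis on $\|F_{\varphi(t)\circ\varphi_{t_0}}^{-1}\|_\infty$ enters.
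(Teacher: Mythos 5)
Your proposal is correct and follows essentially the same route as the paper: reduce to Theorem \ref{thm:abstract_parabolic_ivp} by establishing boundedness of $\mathcal{L}_{\varphi,\lambda}$ via the uniform $L^\infty$ bounds coming from $\varphi\in H^1([t_0,t_0+\eta],\D^\reg(\R^\dm))$, and coercivity via the uniform ellipticity of $\newdifftensor_{\varphi(t)}$ (where the hypothesis on $\|F_{\varphi(t)\circ\varphi_{t_0}}^{-1}\|_\infty$ enters) together with Young's inequality on the cross term and a sufficiently large $\lambda$. Your treatment is, if anything, slightly more explicit than the paper's about how the frame-field assumption produces the lower ellipticity constant, but the argument is the same.
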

\begin{proof}
The proof is mainly an application of Theorem \ref{thm:abstract_parabolic_ivp}. We only need to choose $\lambda > 0$ such that the operator $\mathcal{L}_{\varphi, \, \lambda}$ is bounded and coercive. Since $\varphi \in H^1([t_0,t_0+\eta],\D^\reg(\R^\dm))$, we have $\varphi \in C([t_0,t_0+\eta],\D^\reg(\R^\dm))$ and as $s \geq 2$
\[
	B_\varphi
	\colonequals
	\max\left\{
		\max_{t \,\in\, [t_0, \, t_0 + \eta]} \|\varphi(t) - \mathit{id}\|_{2, \infty}, 
		\max_{t \,\in\, [t_0, \, t_0 + \eta]} \|\varphi^{-1}(t) - \mathit{id}\|_{1, \infty}
	\right\} < \infty .
\]
Recall also that
\[
	\newdifftensor_{\varphi}(t,\cdot)
	=
	(D\varphi(t))^{-1} \ 
	\left( \vphantom{\sum} F_{\varphi(t) \,\circ\, \varphi_{t_0}} \circ \varphi(t) \right) \,
	\mathrm{diag}(r_1, r_2, r_3) \,
	\left( \vphantom{\sum} F_{\varphi(t) \,\circ\, \varphi_{t_0}} \circ \varphi(t) \right)^\top
	D\varphi(t)^{-\top}
\]
and the columns of $F_{\varphi(t) \,\circ\, \varphi_{t_0}}(x)$ are unit vectors, so
\[
	\|\newdifftensor_{\varphi(t)}\|_\infty \leq \genC \, B_\varphi^2
	\ \ \mbox{ for all } t \in [t_0, t_0 + \eta]
\]
and
\[
	\|\newdifftensor_{\varphi(t)}^{-1}\|_\infty \leq \genC B_\varphi^2 \left( \sup_{t \,\in\, [t_0, \, t_0 + \eta]} \|F_{\varphi(t) \,\circ\, \varphi_{t_0}}^{-1}\|_\infty \right)^2
	\ \ \mbox{ for all } t \in [t_0, t_0 + \eta] .
\]
(Recall that $\genC$ is our notation for a generic constant.)
It follows that there exist two constants $\alpha_\varphi$ and $\beta_\varphi$ (depending on $\varphi$) such that $\alpha_\varphi |z|^2 \leq z^T \newdifftensor_{\varphi} z \leq \beta_\varphi |z|^2 $ for all $t\in [t_0, t_0 + \eta]$ and $z \in \R^\dm$ and therefore we have
\begin{align*}
	&\hspace{15pt}
	|(\mathcal{L}_{\varphi, \, \lambda} \hspace{1pt} \testfun_1 \mid \testfun_2)|
	\\[3pt]
	&=
	\left|
	\int_{t_0}^{t_0 + \eta}
	\left( \vphantom{\sum}
		\lambda \, \langle \testfun_1(t), \testfun_2(t) \rangle_{L^2}
		+
		\langle \newdifftensor_{\varphi(t)} \, \nabla \testfun_1(t), \nabla \testfun_2(t) \rangle_{L^2}
		-
		\left\langle \testfun_1(t) \, \newdifftensor_{\varphi(t)} \, \frac{\nabla \jac\varphi(t,x)}{\jac\varphi(t,x)}, \nabla \testfun_2(t) \right\rangle_{L^2}
	\right)
	dt
	\right|
	\\[3pt]
	&\leq
	(\lambda + \genC[\varphi])
	\int_{t_0}^{t_0 + \eta}
	\left( \vphantom{\sum} 
		\|\testfun_1(t)\|_{L^2} \, \|\testfun_2(t)\|_{L^2}
		+
		\|\nabla \testfun_1(t)\|_{L^2} \, \|\nabla \testfun_2(t)\|_{L^2}
		+
		\|\testfun_1(t)\|_{L^2} \, \|\nabla \testfun_2(t)\|_{L^2}
	\right) dt
	\\[3pt]
	&\leq
	(\lambda + \genC[\varphi])
	\int_{t_0}^{t_0 + \eta} 3 \, \|\testfun_1(t)\|_{H^1} \, \|\testfun_2(t)\|_{H^1} \, dt
	\\[3pt]
	&\leq
	3 \, (\lambda + \genC[\varphi]) \ \|\testfun_1\|_{L^2([t_0, \, t_0 + \eta], \, H^1(M_{t_0}))} \ \|\testfun_2\|_{L^2([t_0, \, t_0 + \eta], \, H^1(M_{t_0}))}
\end{align*}
which shows the boundedness of the operator $\mathcal{L}_{\varphi, \, \lambda}$. Moreover, for any $\varepsilon>0$:
\begin{align*}
	&\hspace{15pt}
	(\mathcal{L}_{\varphi, \, \lambda} \hspace{1pt} \testfun \mid \testfun)
	\\[3pt]
	&=
	\int_{t_0}^{t_0 + \eta}
	\left(
		\lambda \, \|\testfun(t)\|_{L^2}^2
		+
		\langle \newdifftensor_{\varphi(t)} \, \nabla \testfun(t), \nabla \testfun(t) \rangle_{L^2}
		-
		\left\langle \testfun(t) \, \newdifftensor_{\varphi(t)} \, \frac{\nabla \jac\varphi(t,x)}{\jac\varphi(t,x)}, \nabla \testfun(t) \right\rangle_{L^2}
	\right)
	dt
	\\[3pt]
	&\geq
	\int_{t_0}^{t_0 + \eta}
	\left(
		\lambda \, \|\testfun(t)\|_{L^2}^2
		+
		\beta_\varphi \, \|\nabla \testfun(t)\|_{L^2}^2
		-
		\genC[\varphi] \, \|\testfun(t)\|_{L^2} \, \|\nabla \testfun(t)\|_{L^2}
	\right)
	dt
	\\[3pt]
	&\geq
	\int_{t_0}^{t_0 + \eta}
	\left(
		\lambda \, \|\testfun(t)\|_{L^2}^2
		+
		\beta_\varphi \, \|\nabla \testfun(t)\|_{L^2}^2
		-
		\genC[\varphi] \left( \frac{1}{2\varepsilon} \, \|\testfun(t)\|_{L^2}^2 + \frac{\varepsilon}{2} \, \|\nabla \testfun(t)\|_{L^2}^2 \right)
	\right)
	dt
	\\[3pt]
	&=
	\int_{t_0}^{t_0 + \eta}
	\left(
		\left( \lambda - \frac{\genC[\varphi]}{2\varepsilon} \right) \|\testfun(t)\|_{L^2}^2
		+
		\left( \beta_\varphi - \frac{\genC[\varphi] \, \varepsilon}{2} \right) \|\nabla \testfun(t)\|_{L^2}^2
	\right)
	dt .
\end{align*}
Now, choose $\varepsilon(\varphi) > 0$ and $\lambda(\varphi) > 0$ such that
\[
	\beta_\varphi - \frac{\genC[\varphi] \, \varepsilon}{2} > 0
	\ \ \mbox{ and } \ \ 
	\lambda - \frac{\genC[\varphi]}{2\varepsilon} > 0 
\]
and the above inequality leads to
\[
	(\mathcal{L}_{\varphi, \, \lambda} \hspace{1pt} \testfun \mid \testfun)
	\geq
	\min\left\{\lambda - \frac{\genC[\varphi]}{2\varepsilon}, \, \beta_\varphi - \frac{\genC[\varphi] \, \varepsilon}{2}\right\} \,
	\|\testfun\|_{L^2([t_0, \, t_1], \, H^1(M_{t_0}))}^2 .
\]
This shows that of $\mathcal{L}_{\varphi, \, \lambda}$ is coercive and concludes the proof of Lemma \ref{thm:diffusion_nonhomogeneous_initialZero}. 
\end{proof}
Existence and uniqueness in the case of a non-homogeneous initial condition follows as a corollary.
\begin{corollary}
\label{cor:diffusion_nonhomogeneous}
Under the same assumptions as in Lemma~\ref{thm:diffusion_nonhomogeneous_initialZero}, there exists $\lambda(\varphi) > 0$ such that for all $g\in L^2([t_0, t_0 + \eta], H^1(M_{t_0})^*)$ and $\priodensitybis_{t_0} \in L^2(M_{t_0})$, the problem
\begin{equation}
\label{eq:diffusion_nonhomogeneous}
	\left\{
		\begin{array}{ll}
			\wddt \lagdensitybis + \mathcal{L}_{\varphi, \, \lambda} \lagdensitybis = g \\
			\lagdensitybis(t_0) = \priodensitybis_{t_0}
		\end{array}
	\right.
\end{equation}
has a unique weak solution $\lagdensitybis \in L^2([t_0, t_0+\eta], H^1(M_{t_0})) \cap C([t_0, t_0+\eta], L^2(M_{t_0}))$.
\end{corollary}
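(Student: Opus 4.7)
The plan is to observe that Corollary~\ref{cor:diffusion_nonhomogeneous} is an almost immediate consequence of the machinery already deployed for Lemma~\ref{thm:diffusion_nonhomogeneous_initialZero}. The key point is that the bounds establishing continuity and coercivity of $\mathcal{L}_{\varphi, \, \lambda}$ derived in the proof of the lemma depend only on $\varphi$ and on the chosen value $\lambda(\varphi)$; they do not involve the initial datum in any way. Since Theorem~\ref{thm:abstract_parabolic_ivp} explicitly permits any $u_0 \in L^2(\Omega)$, it already accommodates a nonzero initial condition, and the only thing one has to verify is that $\mathcal{L}_{\varphi, \, \lambda}$ still satisfies the hypotheses of that theorem --- which it does, by the very construction in the previous lemma.

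More concretely, I would pick the same constant $\lambda(\varphi) > 0$ as in the proof of Lemma~\ref{thm:diffusion_nonhomogeneous_initialZero}, so that $\mathcal{L}_{\varphi, \, \lambda}$ is bounded from $L^2([t_0, t_0 + \eta], H^1(M_{t_0}))$ into its topological dual and coercive on that space. Applying Theorem~\ref{thm:abstract_parabolic_ivp} to the right-hand side $g \in L^2([t_0, t_0 + \eta], H^1(M_{t_0})^*)$ together with the initial datum $\priodensitybis_{t_0} \in L^2(M_{t_0})$ then yields a unique weak solution $\lagdensitybis \in L^2([t_0, t_0 + \eta], H^1(M_{t_0}))$ of \eqref{eq:diffusion_nonhomogeneous}.

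The continuity statement $\lagdensitybis \in C([t_0, t_0 + \eta], L^2(M_{t_0}))$ would be obtained directly from the PDE. From $\wddt \lagdensitybis = g - \mathcal{L}_{\varphi, \, \lambda} \lagdensitybis$ and the fact that both summands on the right belong to $L^2([t_0, t_0 + \eta], H^1(M_{t_0})^*)$ (the first by assumption, the second by the boundedness estimate already proved in the lemma), Theorem~\ref{thm:u_dtu_continuous} applies and yields the claimed continuity, which in particular makes the prescription $\lagdensitybis(t_0) = \priodensitybis_{t_0}$ meaningful and verified.

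No serious obstacle is anticipated at this stage: the genuinely delicate work --- namely arranging coercivity of $\mathcal{L}_{\varphi, \, \lambda}$ by absorbing the drift term involving $\newdifftensor_{\varphi(t)} \nabla \jac\varphi / \jac\varphi$ via a Young-type inequality and a sufficiently large $\lambda$ --- has already been carried out in Lemma~\ref{thm:diffusion_nonhomogeneous_initialZero}. Should one wish to avoid invoking Theorem~\ref{thm:abstract_parabolic_ivp} directly with a nonzero initial datum, an equivalent route would be a standard lifting argument: decompose $\lagdensitybis = w + \tilde{\lagdensitybis}$, where $w$ is any fixed element of $L^2([t_0, t_0 + \eta], H^1(M_{t_0}))$ with $\wddt w \in L^2([t_0, t_0 + \eta], H^1(M_{t_0})^*)$ and $w(t_0) = \priodensitybis_{t_0}$, and apply the homogeneous-initial-condition Lemma~\ref{thm:diffusion_nonhomogeneous_initialZero} to $\tilde{\lagdensitybis}$ with right-hand side $g - \wddt w - \mathcal{L}_{\varphi, \, \lambda} w$.
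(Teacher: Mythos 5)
Your proposal is correct, and your primary route is actually slightly more direct than the one the paper takes. You apply Theorem~\ref{thm:abstract_parabolic_ivp} directly with the nonzero initial datum $\priodensitybis_{t_0} \in L^2(M_{t_0})$, which is legitimate since that theorem, as stated, already accepts an arbitrary $u_0 \in L^2(\Omega)$; the boundedness and coercivity of $\mathcal{L}_{\varphi,\,\lambda}$ established in the proof of Lemma~\ref{thm:diffusion_nonhomogeneous_initialZero} indeed do not depend on the initial condition, and your derivation of the continuity in time via Theorem~\ref{thm:u_dtu_continuous} is exactly right. The paper instead proves the corollary by the lifting argument that you sketch as your fallback: it invokes Theorem~3.2, Chapter~1 of Lions--Magenes to produce a function $w \in L^2([t_0, t_0+\eta], H^1(M_{t_0}))$ with $\wddt w \in L^2([t_0, t_0+\eta], H^1(M_{t_0})^*)$ and $w(t_0) = \priodensitybis_{t_0}$, then applies the homogeneous-initial-condition lemma to $\lagdensitybis - w$ with right-hand side $g - \wddt w - \mathcal{L}_{\varphi,\,\lambda} w$. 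The one point to flag in your alternative route is that the existence of such a lifting $w$ is not automatic --- it is a surjectivity-of-trace statement that requires a citation (the paper uses Lions--Magenes for it) --- so if you go that way you should not phrase it as ``any fixed element'' without justifying that at least one exists. Since your primary route bypasses this entirely, there is no gap in your argument.
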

\begin{proof}
According to Theorem 3.2, Chapter 1, in \cite{magenes1972non}, there exists $w \in L^2([t_0, t_0 + \eta], H^1(M_{t_0}))$ such that $\partial_t \hspace{1pt} w \in L^2([t_0, t_0 + \eta], H^1(M_{t_0})^*)$ and $w(0) = \priodensitybis_{t_0}$. Using the previous lemma, let $\tilde{\lagdensitybis}$ be the solution to
\[
	\left\{
		\begin{array}{ll}
			\partial_t \hspace{1pt} \lagdensitybis + \mathcal{L}_{\varphi, \, \lambda} \hspace{1pt} \lagdensitybis
			=
			g - \partial_t \hspace{1pt} w - \mathcal{L}_{\varphi, \, \lambda} \hspace{1pt} w \\
			\lagdensitybis(t_0) = 0
		\end{array}
	\right. ,
\]
then $\tilde{\lagdensitybis} + w$ is a solution to the original problem. Uniqueness is clear from the uniqueness in Lemma \ref{thm:diffusion_nonhomogeneous_initialZero}.
\end{proof}

\subsubsection{Existence of solutions on a moving domain}
Still assuming that $t\mapsto \varphi(t)$ is given, we now consider general reaction-diffusions on the moving domain, still using a weak formulation, which becomes, introducing the nonlinear operator $\mathcal R_{\varphi,0}: \lagdensity(t,x) \mapsto J\varphi \,R\big(\lagdensity(t,x)/J\varphi(t,x)\Big)$
\begin{equation}
    \label{eq:reaction_diffusion_ivp}
    \left\{
    \begin{aligned}
    & \wddt \lagdensity + \mathcal L_{\varphi, \, 0}\, \lagdensity = \mathcal R_{\varphi,0}(\lagdensity)\\
    & \lagdensity(t_0, x) = \lagdensity_{t_0}(x)
    \end{aligned}
    \right.
\end{equation}

\begin{theorem}
\label{thm:reaction_diffusion_fixed_phi}
Suppose that the reaction function $R$ is Lipschitz continuous. Under the conditions of Lemma~\ref{thm:diffusion_nonhomogeneous_initialZero}, for all $\priodensity_{t_0} \in L^2(M_{t_0})$, the system~\eqref{eq:reaction_diffusion_ivp} has a unique weak solution on $[t_0,t_0+\eta]$.
\end{theorem}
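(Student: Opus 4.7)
The plan is to apply a Banach fixed point argument that reduces \eqref{eq:reaction_diffusion_ivp} to iterating the linear solvability result of Corollary \ref{cor:diffusion_nonhomogeneous}. After the change of variable $\lagdensity(t) = e^{\lambda t}\lagdensitybis(t)$ with $\lambda = \lambda(\varphi)$ as supplied by Lemma \ref{thm:diffusion_nonhomogeneous_initialZero}, define on the Banach space $X = L^2([t_0, t_0+\eta], L^2(\Mtzero))$ the map $\Phi$ sending $\lagdensitybis\in X$ to the unique weak solution (given by Corollary \ref{cor:diffusion_nonhomogeneous}) of
\begin{equation*}
\wddt u + \mathcal{L}_{\varphi, \lambda}\, u \;=\; e^{-\lambda t}\,\mathcal{R}_{\varphi, 0}(e^{\lambda t}\lagdensitybis), \qquad u(t_0) = e^{-\lambda t_0}\lagdensity_{t_0}.
\end{equation*}
Through the inverse change of variable, weak solutions of \eqref{eq:reaction_diffusion_ivp} correspond exactly to fixed points of $\Phi$.

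First I would verify that $\Phi$ is well-defined. Since $\varphi \in H^1([t_0, t_0+\eta], \D^\reg(\R^\dm))$ embeds into $C([t_0, t_0+\eta], \D^\reg(\R^\dm))$ with $\varphi(t_0)=\id$, the Jacobian $\jac\varphi$ is continuous in $(t,x)$ and uniformly bounded above and below by positive constants; together with the boundedness of $R$, this places $e^{-\lambda t}\mathcal{R}_{\varphi,0}(e^{\lambda t}\lagdensitybis)$ in $L^\infty([t_0, t_0+\eta], L^2(\Mtzero)) \subset L^2([t_0, t_0+\eta], H^1(\Mtzero)^*)$, so Corollary \ref{cor:diffusion_nonhomogeneous} applies and $\Phi(\lagdensitybis)$ lies in $L^2([t_0,t_0+\eta],H^1(\Mtzero)) \cap C([t_0,t_0+\eta],L^2(\Mtzero)) \subset X$.

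For the contraction step, given $\lagdensitybis_1, \lagdensitybis_2\in X$, set $w = \Phi(\lagdensitybis_1) - \Phi(\lagdensitybis_2)$. Then $w$ satisfies the linear equation with zero initial condition and right-hand side
\begin{equation*}
h(t,x) = e^{-\lambda t}\Big[\mathcal{R}_{\varphi,0}(e^{\lambda t}\lagdensitybis_1) - \mathcal{R}_{\varphi,0}(e^{\lambda t}\lagdensitybis_2)\Big](t,x).
\end{equation*}
The Lipschitz property of $R$ combined with the boundedness of $\jac\varphi$ yields the pointwise bound $|h(t,x)| \leq \mathrm{Lip}(R)\,|\lagdensitybis_1(t,x)-\lagdensitybis_2(t,x)|$. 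Applying Lemma \ref{lemma:dt_norm_squared} to $\|w(t)\|_{L^2}^2$, together with the pointwise-in-time coercivity that underlies the proof of Lemma \ref{thm:diffusion_nonhomogeneous_initialZero} and a Young inequality, produces a differential inequality of the form $\tfrac{d}{dt}\|w(t)\|_{L^2}^2 \leq \genC[\varphi]\,\|h(t)\|_{L^2}^2$; integration and Grönwall then yield
\begin{equation*}
\|w\|_X^2 \;\leq\; \genC[\varphi]\,\eta\,\mathrm{Lip}(R)^2\,\|\lagdensitybis_1-\lagdensitybis_2\|_X^2.
\end{equation*}

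The main obstacle is that this contraction constant grows with $\eta$, so the argument is only immediately local. To cover the full prescribed interval, I would partition $[t_0, t_0+\eta]$ into finitely many subintervals of length small enough to force a strict contraction, then use the $C([t_0, t_0+\eta], L^2(\Mtzero))$-regularity from Corollary \ref{cor:diffusion_nonhomogeneous} to feed the terminal value on one subinterval as the initial datum on the next; since $\mathrm{Lip}(R)$ and the bounds on $\jac\varphi$ and on $\mathcal{L}_{\varphi,\lambda}$ are uniform in $t$, the subinterval length can be fixed independently of the step, so finitely many iterations suffice. Alternatively, one can work on all of $[t_0, t_0+\eta]$ at once by equipping $X$ with a weighted norm $\|\lagdensitybis\|_{X,\mu}^2 = \int_{t_0}^{t_0+\eta} e^{-\mu t}\|\lagdensitybis(t)\|_{L^2}^2\,dt$ and choosing $\mu$ large depending on $\mathrm{Lip}(R)$ and $\genC[\varphi]$. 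Uniqueness of the weak solution follows from the very same energy estimate applied to the difference of two candidate solutions.
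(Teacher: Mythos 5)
Your proposal is correct and follows essentially the same route as the paper: the same exponential change of variable, a Banach fixed point on the reaction term built on the linear solvability of Corollary~\ref{cor:diffusion_nonhomogeneous}, the same coercivity-plus-Young-plus-Gr\"onwall energy estimate for the contraction, and extension to all of $[t_0,t_0+\eta]$ by iterating over subintervals of a fixed small length (the paper takes $X=C([t_0,t_0+\delta],L^2(M_{t_0}))$ with the sup norm where you take $L^2$ in time, but either choice works). One small point: this theorem assumes only that $R$ is Lipschitz, not bounded, so to place the right-hand side in $L^2([t_0,t_0+\eta],H^1(M_{t_0})^*)$ you should invoke the linear growth bound $|R(z)|\leq \genC\,(1+|z|)$ as the paper does rather than boundedness of $R$; the contraction estimate itself uses only the Lipschitz constant, so nothing else in your argument changes.
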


\begin{proof}
Let us fix $\priodensity_{t_0} \in L^2(M_{t_0})$. Relying again on the change of function $\lagdensitybis(t,x) = e^{-\lambda t} \lagdensity(t,x)$, the reaction-diffusion system \eqref{eq:reaction_diffusion_ivp} becomes:
\begin{equation}
    \label{eq:reaction_diffusion_ivp2}
    \left\{
    \begin{aligned}
    & \wddt \lagdensitybis + \mathcal L_{\varphi, \, \lambda}\, \lagdensitybis = \mathcal R_{\varphi, \lambda}(\lagdensitybis)\\
    & \lagdensitybis(t_0, x) = \lagdensitybis_{t_0}(x)
    \end{aligned}
    \right.
\end{equation}
with $\lagdensitybis_{t_0} = e^{-\lambda t_0}\lagdensity_{t_0}$ and $
\mathcal R_{\varphi, \lambda}(\lagdensitybis)(t,x)  = e^{-\lambda t}\mathcal R_{\varphi,0}(e^{\lambda t}\lagdensitybis)(t,x).
$

Let $0<\delta\leq \eta$ to be fixed later in the proof and consider the space $X \colonequals C([t_0, t_0 + \delta], L^2(M_{t_0}))$ equipped with the norm
\[
	\|\testfun\|_X \colonequals \max_{t \,\in\, [t_0, \, t_0 + \delta]} \, \|\testfun(t)\|_{L^2(M_{t_0})} .
\]
Given $\testfun \in X$, let
\[
	g_{\lambda, \, \testfun}(t) \colonequals e^{-\lambda t}\reaction\left(\frac{e^{\lambda t}\testfun(t)}{\jac\varphi(t)}\right) \jac\varphi(t),
\]
then $g_{\lambda, \, \testfun} \in L^2([t_0, t_0 + \delta], L^2(M_{t_0}))$,  since $\jac\varphi$ and $1/\jac\varphi$ are uniformly bounded on $[t_0,t_0+\delta] \times M_{t_0}$, and $|\reaction(z)| \leq C \, (1 + |z|)$ by Lipschitz continuity of $\reaction$. From Corollary \ref{cor:diffusion_nonhomogeneous}, we know that there exists $\lambda > 0$ that may depend on $\varphi$ but not on $h$ such that
\[
	\left\{
		\begin{array}{ll}
			\partial_t \hspace{1pt} \lagdensitybis + \mathcal{L}_{\varphi, \, \lambda} \hspace{1pt} \lagdensitybis = g_{\lambda, \, \testfun} \\
			\lagdensitybis(t_0) = e^{-\lambda t_0} \priodensity_{t_0}
		\end{array}
	\right.
\]
has a unique weak solution for all $\testfun \in X$. Denote this weak solution by $\lagdensitybis = \mathcal{S}(\testfun) \in L^2([t_0, t_0 + \delta], H^1(M_{t_0})) \cap X$.

We now show that $\mathcal{S}: X \rightarrow X$ is a contraction when $\delta$ is chosen small enough. Let $\testfun_1,\testfun_2 \in X$ and $\lagdensitybis_1 = \mathcal{S}(\testfun_1)$, $\lagdensitybis_2 = \mathcal{S}(\testfun_2)$, so that
\[
	\partial_t \hspace{1pt} \lagdensitybis_1 + \mathcal{L}_{\varphi, \, \lambda} \hspace{1pt} \lagdensitybis_1 = g_{\lambda, \, \testfun_1} \,\in\, L^2([t_0, t_0 + \delta], H^1(M_{t_0})^*)
\]
and
\[
	\partial_t \hspace{1pt} \lagdensitybis_2 + \mathcal{L}_{\varphi, \, \lambda} \hspace{1pt} \lagdensitybis_2 = g_{\lambda, \, \testfun_2} \,\in\, L^2([t_0, t_0 + \delta], H^1(M_{t_0})^*) .
\]
It follows that for almost all $t \in [t_0, t_0+\delta]$
\[
	\left( \vphantom{\sum} \partial_t \hspace{1pt} (\lagdensitybis_1 - \lagdensitybis_2) \right)\!(t)
	+
	\left( \vphantom{\sum} \mathcal{L}_{\varphi, \, \lambda} \hspace{1pt} (\lagdensitybis_1 - \lagdensitybis_2) \right)\!(t)
	=
	(g_{\lambda, \, \testfun_1} - g_{\lambda, \, \testfun_2})(t) \in L^2(M_{t_0}) \subset H^1(M_{t_0})^*.
\]
Evaluating at $(\lagdensitybis_1 - \lagdensitybis_2)(t) \in H^1(M_{t_0})$ gives
\begin{align}
	\begin{split}
	&\hspace{13pt}
	\left( \left( \vphantom{\sum} \partial_t \hspace{1pt} (\lagdensitybis_1 - \lagdensitybis_2) \right)\!(t) \ \rule[-5pt]{0.5pt}{15pt}\  (\lagdensitybis_1 - \lagdensitybis_2)(t) \right)
	+
	\left( \left( \vphantom{\sum} \mathcal{L}_{\varphi, \, \lambda} \hspace{1pt} (\lagdensitybis_1 - \lagdensitybis_2) \right)\!(t) \ \rule[-5pt]{0.5pt}{15pt}\  (\lagdensitybis_1 - \lagdensitybis_2)(t) \right)
	\\
	&=
	\left( \vphantom{\sum} (g_{\lambda, \, \testfun_1} - g_{\lambda, \, \testfun_2})(t) \ \rule[-5pt]{0.5pt}{15pt}\  (\lagdensitybis_1 - \lagdensitybis_2)(t) \right) .
	\end{split}
	\label{eq:5}
\end{align}
As a result of the coercivity of the operator $\mathcal{L}_{\varphi, \, \lambda}$ shown earlier, for some constant $\genC[\varphi] >0$:
\begin{align}
	\begin{split}
	\left( \left( \vphantom{\sum} \mathcal{L}_{\varphi, \, \lambda} \hspace{1pt} (u_1 - u_2) \right)\!(t) \ \rule[-5pt]{0.5pt}{15pt}\  (\lagdensitybis_1 - \lagdensitybis_2)(t) \right)
	&\geq
	\genC[\varphi] \, \|(\lagdensitybis_1 - \lagdensitybis_2)(t)\|_{H^1}^2 .
	\end{split}
	\label{eq:6}
\end{align}
We can now combine Lemma~\ref{lemma:dt_norm_squared}, \eqref{eq:6}, and \eqref{eq:5} to obtain
\begin{align*}
	&\hspace{13pt}
	\frac{1}{2} \left( \partial_t \hspace{1pt} \|(\lagdensitybis_1 - \lagdensitybis_2)(\cdot)\|_{L^2}^2 \right)\!(t)
	+
	\genC[\varphi] \, \|(\lagdensitybis_1 - \lagdensitybis_2)(t)\|_{H^1}^2
	\\[5pt]
	&\leq
	\left( \left( \vphantom{\sum} \partial_t \hspace{1pt} (\lagdensitybis_1 - \lagdensitybis_2) \right)\!(t) \ \rule[-5pt]{0.5pt}{15pt}\  (\lagdensitybis_1 - \lagdensitybis_2)(t) \right)
	+
	\left( \left( \vphantom{\sum} \mathcal{L}_{\varphi, \, \lambda} \hspace{1pt} (\lagdensitybis_1 - \lagdensitybis_2) \right)\!(t) \ \rule[-5pt]{0.5pt}{15pt}\  (\lagdensitybis_1 - \lagdensitybis_2)(t) \right)
	\\[5pt]
	&\leq
	\|(g_{\lambda, \, \testfun_1} - g_{\lambda, \, \testfun_2})(t)\|_{L^2} \, \|(\lagdensitybis_1 - \lagdensitybis_2)(t)\|_{L^2}
	\\[5pt]
	&\leq
	\frac{1}{2 \varepsilon} \, \|(g_{\lambda, \, \testfun_1} - g_{\lambda, \, \testfun_2})(t)\|_{L^2}^2
	+
	\frac{\varepsilon}{2} \, \|(\lagdensitybis_1 - \lagdensitybis_2)(t)\|_{H^1}^2
\end{align*}
for any $\varepsilon >0$. Choosing a small enough $\varepsilon$, we can get
\begin{align*}
	\left( \partial_t \hspace{1pt} \|(\lagdensitybis_1 - \lagdensitybis_2)(\cdot)\|_{L^2}^2 \right)\!(t)
	&\leq
	\genC[\varphi] \, \|(g_{\lambda, \, \testfun_1} - g_{\lambda, \, \testfun_2})(t)\|_{L^2}^2
	\\[2pt]
	&\leq
	\genC[\varphi,R] \, \|(\testfun_1 - \testfun_2)(t)\|_{L^2}^2 .
\end{align*}
Consequently, for all $t \in [t_0, t_0 +\delta]$,
\begin{align*}
	\|(\lagdensitybis_1 - \lagdensitybis_2)(t)\|_{L^2}^2
	&=
	\|(\lagdensitybis_1 - \lagdensitybis_2)(t_0)\|_{L^2}^2
	+
	\int_{t_0}^t (\partial_t \hspace{1pt} \|(\lagdensitybis_1 - \lagdensitybis_2)(\cdot)\|_{L^2}^2)(s) \, ds
	\\
	&\leq
	\genC[\varphi,R] \int_{t_0}^t \|(\testfun_1 - \testfun_2)(s)\|_{L^2}^2 \, ds
	\\
	&\leq
	\genC[\varphi,R] \ \delta \ \|\testfun_1 - \testfun_2\|_X^2 ,
\end{align*}
which further gives
\[
	\|\lagdensitybis_1 - \lagdensitybis_2\|_X \leq \sqrt{\genC[\varphi,R] \, \delta} \, \|\testfun_1 - \testfun_2\|_X .
\]
Picking $\delta>0$ such that $\sqrt{\genC[\varphi,R] \, \delta} < 1$ then makes the mapping $\mathcal{S}$ contractive and thus, by the Banach fixed point theorem, there exists a unique weak solution $\lagdensitybis$ to \eqref{eq:reaction_diffusion_ivp2} on $[t_0, t_0 +\delta]$, which leads to the weak solution $\lagdensity(t,x) = e^{\lambda t} \lagdensitybis(t,x)$ of \eqref{eq:reaction_diffusion_ivp} on the same interval. Furthermore, we see that $\delta$ only depends on the deformation $\varphi$ and not the initial condition $\priodensity_{t_0}$. Therefore, applying the same reasoning at $t_0+\delta$ with initial condition $\lagdensity(t_0+\delta,\cdot)$, we can extend the solution to $[t_0,t_0+2\delta]$ and by extension to the whole interval $[t_0,t_0+\eta]$. Then the uniqueness of $\lagdensity$ is an immediate consequence of the uniqueness of the solution of \eqref{eq:reaction_diffusion_ivp2} on each subinterval of length $\delta$, thus completing the proof of Theorem \ref{thm:reaction_diffusion_fixed_phi}.
\end{proof}

As a direct consequence of Theorem \ref{thm:reaction_diffusion_fixed_phi}, we can finally obtain the following result of existence of weak solutions to the reaction-diffusion PDE on the full time interval $[0,T]$:
\begin{corollary}
\label{cor:weak_solution_PDE_global}
Assume that $m\geq 2$ and that $R$ is Lipschitz. Let $\varphi \in H^1([0,T],\D^\reg(\R^\dm))$ with $\varphi(0) = \id$ such that $\sup_{t \,\in\, [0, \, T]} \|F_{\varphi(t)}^{-1}\|_\infty < \infty$. Then for all $\priodensity_{0} \in L^2(M_0)$, there exists a unique weak solution of \eqref{eq:reaction_diffusion_ivp} on $[0,T]$.
\end{corollary}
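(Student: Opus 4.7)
My plan is to recognize that Corollary \ref{cor:weak_solution_PDE_global} is a direct specialization of Theorem \ref{thm:reaction_diffusion_fixed_phi}, obtained by taking $t_0 = 0$, $\eta = T$, and the auxiliary initial deformation $\varphi_{t_0} = \id$. With these choices the shifted reference domain $\Mtzero = \varphi_{t_0}(\Mzero)$ coincides with $M_0$, and every occurrence of the composition $\varphi(t) \circ \varphi_{t_0}$ appearing in the diffusion tensor formula \eqref{eq:syst.with.f} and in the frame-field bound collapses to $\varphi(t)$ alone. So the proof reduces to the bookkeeping step of checking that each hypothesis translates correctly.

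I would then verify the hypotheses of Theorem \ref{thm:reaction_diffusion_fixed_phi} one by one. The standing assumption $\reg \geq 2$ of Section \ref{ssec:local_existence_reaction_diffusion} is explicitly hypothesized in the corollary; the reaction function $R$ is Lipschitz by assumption; the regularity $\varphi \in H^1([0,T],\D^\reg(\R^\dm))$ with $\varphi(0) = \id$ is given; and the frame-field bound required by Lemma \ref{thm:diffusion_nonhomogeneous_initialZero}, namely $\sup_{t \in [0,T]} \|F^{-1}_{\varphi(t) \circ \varphi_{t_0}}\|_\infty < \infty$, becomes exactly the corollary's hypothesis $\sup_{t \in [0,T]} \|F^{-1}_{\varphi(t)}\|_\infty < \infty$ upon setting $\varphi_{t_0} = \id$. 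For any initial datum $\priodensity_0 \in L^2(M_0) = L^2(\Mtzero)$, Theorem \ref{thm:reaction_diffusion_fixed_phi} then produces a unique weak solution of \eqref{eq:reaction_diffusion_ivp} on $[0,T]$, which is precisely the claim.

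I do not expect a real obstacle here: the substantive analytic work has already been absorbed into Theorem \ref{thm:reaction_diffusion_fixed_phi}. In particular, the Banach fixed-point contraction on a short interval of length $\delta = \delta(\varphi)$ depending only on the deformation and not on the initial condition, together with the iterative concatenation that stitches together such short-time solutions to reach an arbitrary horizon, is already carried out inside that theorem's proof. The role of the corollary is thus mainly to isolate the globally relevant setting, in which the time-dependent diffeomorphism starts from the identity at $t=0$, as this is the form in which the reaction-diffusion solver will be invoked in the fixed-point argument for the fully coupled system \eqref{eq:PDELDDMM} in Section \ref{sec:theorem_proof}.
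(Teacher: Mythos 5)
Your proposal is correct and matches the paper's (unwritten) argument exactly: the corollary is indeed obtained by instantiating Theorem \ref{thm:reaction_diffusion_fixed_phi} with $t_0 = 0$, $\varphi_{t_0} = \id$, and $\eta = T$, under which the frame-field hypothesis of Lemma \ref{thm:diffusion_nonhomogeneous_initialZero} reduces to the corollary's assumption and the concatenation-of-short-intervals step inside the theorem's proof already delivers the solution on all of $[0,T]$. Nothing further is needed.
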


\subsection{Bounds on the solutions}
We now derive some control bounds on the solution of \eqref{eq:reaction_diffusion_ivp2} with respect to the deformations $\varphi$ and $\varphi_{t_0}$ that will be needed in the next section. Let us fix $r>0$ and denote $B_r=\overbar{B}_{r}(\mathit{id})$, where the closed ball is for the distance defined in \eqref{eq:d.reg.inf}. We consider deformations $\varphi \in C([t_0, t_0+\eta],B_r)$ i.e., such for all $t\in[t_0,t_0+\eta]$, $\|\varphi(t) -\mathit{id}\|_{\reg,\infty} \leq r$ and $\|\varphi(t)^{-1} -\mathit{id}\|_{\reg,\infty} \leq r$. Note that it follows from the results and proofs above that we can find $\lambda_r>0$ and $\alpha_r > 0$ such that for all $\varphi \in C([t_0, t_0+\eta],B_r)$ and $\lagdensitybis \in L^2([t_0, t_0+\eta], H^1(M_{t_0}))$:
\begin{equation}
\label{eq:coercivity_L}
	((\mathcal{L}_{\varphi, \, \lambda_r} \hspace{1pt} \lagdensitybis)(t) \mid \lagdensitybis(t)) \geq \alpha_r \, \|\lagdensitybis(t)\|_{H^1(M_{t_0})}^2
\end{equation}
and that we have a unique local solution which we shall rewrite $\lagdensity_{\varphi, \, \varphi_{t_0}}$ of \eqref{eq:reaction_diffusion_ivp} given by Theorem \ref{thm:reaction_diffusion_fixed_phi}. We also write $\lagdensitybis_{\varphi, \, \varphi_{t_0}} = e^{-\lambda_r t} \lagdensity_{\varphi, \, \varphi_{t_0}}$. 
\begin{lemma}
\label{lemma:pde_L_lipschitz}
Let $\lagdensitybis \in L^2([t_0, t_0+\eta], H^1(M_{t_0}))$ and $\varphi, \psi \in C([t_0, t_0+\eta],B_r)$. For almost every $t \in [t_0, t_0+\eta]$,
\[
	\|(\mathcal{L}_{\varphi, \, \lambda_r} \hspace{1pt} \lagdensitybis)(t) - (\mathcal{L}_{\psi, \, \lambda_r} \hspace{1pt} \lagdensitybis)(t)\|_{H^1(M_{t_0})^*}
	\leq
	\genC_r \, \|\varphi(t) - \psi(t)\|_{\reg, \infty} \, \|\lagdensitybis(t)\|_{H^1(M_{t_0})} .
\]
\end{lemma}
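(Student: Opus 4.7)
The plan is to estimate the dual norm by duality: for a test function $\phi \in H^1(M_{t_0})$ with $\|\phi\|_{H^1} \leq 1$, evaluate $((\mathcal{L}_{\varphi, \lambda_r} \lagdensitybis)(t) - (\mathcal{L}_{\psi, \lambda_r} \lagdensitybis)(t) \mid \phi)$. The $\lambda_r\, \lagdensitybis(t)$ term is independent of the deformation, so it cancels in the difference. What is left are two $L^2$-type inner products, so by Cauchy--Schwarz the problem reduces to showing the two pointwise bounds
\[
\|\newdifftensor_{\varphi(t)} - \newdifftensor_{\psi(t)}\|_\infty \leq \genC_r\, \|\varphi(t) - \psi(t)\|_{\reg,\infty},
\qquad
\left\| \newdifftensor_{\varphi(t)} \tfrac{\nabla \jac\varphi(t)}{\jac\varphi(t)} - \newdifftensor_{\psi(t)} \tfrac{\nabla \jac\psi(t)}{\jac\psi(t)}\right\|_\infty \leq \genC_r\, \|\varphi(t) - \psi(t)\|_{\reg,\infty}.
\]
Combining the first with $\|\nabla \lagdensitybis(t)\|_{L^2} \leq \|\lagdensitybis(t)\|_{H^1}$ handles the quadratic piece, and the second with $\|\lagdensitybis(t)\|_{L^2} \leq \|\lagdensitybis(t)\|_{H^1}$ handles the lower-order piece; taking the supremum over unit $\phi$ yields the claimed dual-norm estimate.

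To establish the two sup-norm Lipschitz bounds I would decompose $\newdifftensor_\varphi = D\varphi^{-1}\, \frame_{\varphi \circ \varphi_{t_0}} \circ \varphi\, \mathrm{diag}(r_1,r_2,r_3)\, (\frame_{\varphi \circ \varphi_{t_0}} \circ \varphi)^T\, D\varphi^{-T}$ as given in \eqref{eq:syst.with.f} and analyze each factor. On the closed ball $B_r$, the map $\varphi \mapsto D\varphi$ is affine and bounded in $\C^{\reg-1}$, so, since $\jac\varphi$ is a polynomial in the entries of $D\varphi$ and is uniformly bounded away from zero on $B_r$, the maps $\varphi \mapsto D\varphi^{-1}$, $\varphi \mapsto \jac\varphi$ and $\varphi \mapsto 1/\jac\varphi$ are Lipschitz in $\C^{\reg-1}$. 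Combined with the standing hypothesis that $\varphi \mapsto \frame_{\varphi \circ \varphi_{t_0}}$ is Lipschitz on bounded sets (and $\varphi_{t_0}$ is fixed), together with the fact that composition $(f,\varphi) \mapsto f\circ\varphi$ is locally Lipschitz from $\C^1 \times B_r$ into $\C^0$, we get the first bound by applying the product rule across the four factors. For the second bound, note $\reg \geq 2$ implies $\nabla \jac\varphi$ is a polynomial in entries of $D\varphi$ and $D^2\varphi$ and is therefore Lipschitz in $\varphi$ for the $\|\cdot\|_{\reg,\infty}$ norm; coupled with the first bound and the uniform lower bound on $\jac\varphi$, the product rule again delivers the Lipschitz estimate on $\newdifftensor_\varphi\, \nabla \jac\varphi/\jac\varphi$.

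The main obstacle, and the reason the assumption $\reg \geq 2$ is essential, is controlling the term $\nabla \jac\varphi/\jac\varphi$: this quantity involves second-order derivatives of $\varphi$, so without $\reg \geq 2$ neither the $L^\infty$ bound nor the Lipschitz dependence in $\varphi$ would be available. The rest of the argument is a careful but routine bookkeeping exercise, relying on the fact that restriction of everything to the bounded set $B_r$ makes all the intermediate quantities (matrix inverses, Jacobians, composed frame fields, their gradients) uniformly bounded, so local Lipschitz continuity upgrades to a genuine Lipschitz constant $\genC_r$ depending only on $r$.
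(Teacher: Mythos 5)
Your proposal is correct and follows essentially the same route as the paper: both cancel the $\lambda_r$ term, reduce via Cauchy--Schwarz to the two sup-norm Lipschitz bounds on $\newdifftensor_{\varphi(t)}-\newdifftensor_{\psi(t)}$ and on the $\newdifftensor\,\nabla\jac\varphi/\jac\varphi$ difference, and derive those from uniform bounds on $B_r$ together with $\reg\geq 2$. The only difference is that you spell out the factor-by-factor Lipschitz bookkeeping that the paper merely asserts.
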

\begin{proof}
A direct computation gives for all $\testfun \in H^1(M_{t_0})$
\begin{align*}
	\left| \left( \vphantom{\sum}
		(\mathcal{L}_{\varphi, \, \lambda_r} \hspace{1pt} \lagdensitybis)(t)
		-
		(\mathcal{L}_{\psi, \, \lambda_r} \hspace{1pt} \lagdensitybis)(t)
		\,\ \rule[-5pt]{0.5pt}{15pt}\,\ 
		\testfun
	\right) \right|
	&\leq
	\left|
		\left\langle (\newdifftensor_{\varphi(t)} - \newdifftensor_{\psi(t)}) \, \nabla \lagdensitybis(t), \nabla \testfun \right\rangle_{L^2}
	\right|
	\\[5pt]
	&+
	\left|
		\left\langle \lagdensitybis(t) \, \left( \newdifftensor_{\varphi(t)} \, \frac{\nabla(\jac\varphi(t))}{\jac\varphi(t)} - \newdifftensor_{\psi(t)} \, \frac{\nabla(\jac\psi(t))}{\jac\psi(t)} \right), \nabla \testfun \right\rangle_{L^2}
	\right|
\end{align*}
Since, for all $t \in [t_0, t_0+\eta]$,
\[
	\max\{
		\|\varphi(t) - \mathit{id}\|_{\reg, \infty}, \,
		\|\varphi(t)^{-1} - \mathit{id}\|_{\reg, \infty}, \,
		\|\psi(t) - \mathit{id}\|_{\reg, \infty}, \,
		\|\psi(t)^{-1} - \mathit{id}\|_{\reg, \infty}
	\} \leq r ,
\]
and $\reg \geq 2$ we have
\[
	\max\left\{
		\|\newdifftensor_{\varphi(t)}\|_\infty, \,
		\|\newdifftensor_{\psi(t)}\|_\infty, \,
		\left\|\frac{\nabla(\jac\varphi(t))}{\jac\varphi(t)}\right\|_\infty, \,
		\left\|\frac{\nabla(\jac\psi(t))}{\jac\psi(t)}\right\|_\infty
	\right\}
	\leq
	\genC[r]
\]
and
\[
	\left\|
		\frac{\nabla(\jac\varphi(t))}{\jac\varphi(t)}
		-
		\frac{\nabla(\jac\psi(t))}{\jac\psi(t)}
	\right\|_\infty
	\leq
	\genC[r] \, \|\varphi(t) - \psi(t)\|_{2, \infty} .
\]
The assumption made on the frame field further gives
\[
	\|\newdifftensor_{\varphi(t)} - \newdifftensor_{\psi(t)}\|_\infty \leq \genC[r] \, \|\varphi(t) - \psi(t)\|_{2, \infty} .
\]
Combining the previous estimates and using the Cauchy-Schwarz inequality, we conclude that
\begin{align*}
	&\left| \left( \vphantom{\sum}
		(\mathcal{L}_{\varphi, \, \lambda_r} \hspace{1pt} \lagdensitybis)(t)
		-
		(\mathcal{L}_{\psi, \, \lambda_r} \hspace{1pt} \lagdensitybis)(t)
		\,\ \rule[-5pt]{0.5pt}{15pt}\,\ 
		\testfun
	\right) \right|
	\leq
	\genC[r] \, \|\varphi(t) - \psi(t)\|_{2, \infty} \, \|\lagdensitybis(t)\|_{H^1(M_{t_0})} \, \|\testfun\|_{H^1(M_{t_0})} \\[5pt]
	&\implies
	\|(\mathcal{L}_{\varphi, \, \lambda_r} \hspace{1pt} \lagdensitybis)(t) - (\mathcal{L}_{\psi, \, \lambda_r} \hspace{1pt} \lagdensitybis)(t)\|_{H^1(M_{t_0})^*} \leq \genC[r] \, \|\varphi(t) - \psi(t)\|_{\reg, \infty} \, \|\lagdensitybis(t)\|_{H^1(M_{t_0})}\,.
\end{align*}
\end{proof}
From this result, we get the following estimates for the solution $\lagdensitybis_{\varphi, \, \varphi_{t_0}}$:
\begin{lemma}
\label{lemma:estimate_u_L2_H1}
\[
	\|\lagdensitybis_{\varphi, \, \varphi_{t_0}}\!(t)\|_{L^2} \leq \genC[r,\varphi_{t_0}] \ \mbox{ for all } t \in [t_0, t_0+\eta]
	\ \ \ \mbox{ and } \ \ \ 
	\int_{t_0}^{t_0+\eta} \|\lagdensitybis_{\varphi, \, \varphi_{t_0}}(t)\|_{H^1(M_{t_0})}^2 \, dt \leq \genC[r,\varphi_{t_0}] .
\]
\end{lemma}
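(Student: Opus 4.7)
The plan is to derive both estimates simultaneously from a single energy identity of the type standardly used for parabolic equations. The starting point is to test the equation satisfied by $\lagdensitybis_{\varphi, \, \varphi_{t_0}}$, namely
\[
\wddt \lagdensitybis_{\varphi, \, \varphi_{t_0}} + \mathcal{L}_{\varphi, \, \lambda_r}\, \lagdensitybis_{\varphi, \, \varphi_{t_0}} = \mathcal{R}_{\varphi, \, \lambda_r}(\lagdensitybis_{\varphi, \, \varphi_{t_0}}),
\]
against $\lagdensitybis_{\varphi, \, \varphi_{t_0}}(t) \in H^1(M_{t_0})$, and then invoke Lemma~\ref{lemma:dt_norm_squared} so as to recognize the first term as $\tfrac{1}{2}\, \partial_t \|\lagdensitybis_{\varphi, \, \varphi_{t_0}}(t)\|_{L^2}^2$. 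This yields, for almost every $t \in [t_0, t_0 + \eta]$,
\[
\tfrac{1}{2}\, \partial_t \|\lagdensitybis_{\varphi, \, \varphi_{t_0}}(t)\|_{L^2}^2 + ((\mathcal{L}_{\varphi, \, \lambda_r}\, \lagdensitybis_{\varphi, \, \varphi_{t_0}})(t) \mid \lagdensitybis_{\varphi, \, \varphi_{t_0}}(t)) = (\mathcal{R}_{\varphi, \, \lambda_r}(\lagdensitybis_{\varphi, \, \varphi_{t_0}})(t) \mid \lagdensitybis_{\varphi, \, \varphi_{t_0}}(t)).
\]

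The second step is to control each of the two remaining terms. The left-hand side term is bounded below by $\alpha_r \|\lagdensitybis_{\varphi, \, \varphi_{t_0}}(t)\|_{H^1(M_{t_0})}^2$ thanks to the coercivity estimate \eqref{eq:coercivity_L}, which holds uniformly for $\varphi \in C([t_0, t_0+\eta], B_r)$. For the right-hand side, the Lipschitz continuity of $R$ together with $R(0)$ being bounded implies $|R(z)| \leq \genC(1 + |z|)$, and since $\varphi \in C([t_0, t_0+\eta], B_r)$, both $\jac\varphi$ and $1/\jac\varphi$ are uniformly bounded on $[t_0, t_0+\eta] \times M_{t_0}$ by a constant depending only on $r$. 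The factor $e^{(\lambda_r - \lambda_r)t} = 1$ drops out in the combination $e^{-\lambda_r t} R(e^{\lambda_r t}\cdot)$, so we obtain the pointwise bound $|\mathcal{R}_{\varphi, \, \lambda_r}(\lagdensitybis)(t,x)| \leq \genC[r]\bigl(1 + |\lagdensitybis(t,x)|\bigr)$. Applying Cauchy--Schwarz and Young's inequality yields
\[
|(\mathcal{R}_{\varphi, \, \lambda_r}(\lagdensitybis_{\varphi, \, \varphi_{t_0}})(t) \mid \lagdensitybis_{\varphi, \, \varphi_{t_0}}(t))| \leq \genC[r]\bigl(1 + \|\lagdensitybis_{\varphi, \, \varphi_{t_0}}(t)\|_{L^2}^2\bigr).
\]

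Combining these two bounds gives the differential inequality
\[
\partial_t \|\lagdensitybis_{\varphi, \, \varphi_{t_0}}(t)\|_{L^2}^2 + 2\alpha_r \|\lagdensitybis_{\varphi, \, \varphi_{t_0}}(t)\|_{H^1(M_{t_0})}^2 \leq \genC[r]\bigl(1 + \|\lagdensitybis_{\varphi, \, \varphi_{t_0}}(t)\|_{L^2}^2\bigr).
\]
Dropping the nonnegative $H^1$ term and applying Gr\"onwall's lemma on $[t_0, t_0+\eta]$ with initial value $\|\lagdensitybis_{\varphi, \, \varphi_{t_0}}(t_0)\|_{L^2}^2 = e^{-2\lambda_r t_0}\|\priodensity_{t_0}\|_{L^2(M_{t_0})}^2$, which depends only on $\varphi_{t_0}$, produces the first bound with a constant $\genC[r, \varphi_{t_0}]$. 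Then reinserting this $L^\infty$ bound on $\|\lagdensitybis_{\varphi, \, \varphi_{t_0}}(t)\|_{L^2}^2$ into the differential inequality and integrating over $[t_0, t_0+\eta]$ gives
\[
2\alpha_r \int_{t_0}^{t_0+\eta} \|\lagdensitybis_{\varphi, \, \varphi_{t_0}}(t)\|_{H^1(M_{t_0})}^2 dt \leq \|\lagdensitybis_{\varphi, \, \varphi_{t_0}}(t_0)\|_{L^2}^2 + \genC[r] \eta \bigl(1 + \genC[r,\varphi_{t_0}]\bigr),
\]
which yields the second bound.

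The argument is essentially routine, being a textbook $L^2$-energy estimate for a linear parabolic equation with a sublinear forcing. The one point that requires care is the uniformity of the constants: the coercivity constant $\alpha_r$ and the pointwise bounds on $\newdifftensor_\varphi$, $\jac\varphi$, $1/\jac\varphi$ must not depend on the particular choice of $\varphi$ in the closed ball $B_r$ nor on $t$, but only on $r$, which is precisely what is already established in the proof of Lemma~\ref{thm:diffusion_nonhomogeneous_initialZero}. The dependence on $\varphi_{t_0}$ enters only through the initial datum $\priodensity_{t_0} \in L^2(M_{t_0})$, hence the explicit presence of $\varphi_{t_0}$ in the constants.
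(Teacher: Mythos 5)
Your proposal is correct and follows essentially the same route as the paper: test the equation against $\lagdensitybis_{\varphi,\,\varphi_{t_0}}(t)$, use Lemma~\ref{lemma:dt_norm_squared} and the coercivity bound \eqref{eq:coercivity_L}, drop the nonnegative $H^1$ term to close the $L^2$ bound by Gr\"onwall, then reinsert and integrate for the $H^1$ estimate. The only (harmless) deviation is that you bound the reaction term via the sublinear growth $|R(z)|\leq \genC(1+|z|)$ coming from Lipschitz continuity, whereas the paper uses $\|R\|_\infty$ directly; both yield the same differential inequality.
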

\begin{proof}
From the definition of $\lagdensitybis_{\varphi, \, \varphi_{t_0}}$, we see that for almost all $t\in [t_0, t_0+\eta]$:
\begin{align}
\nonumber
    \left( \vphantom{\sum} (\partial_t \hspace{1pt} \lagdensitybis_{\varphi, \, \varphi_{t_0}})(t) \mid \lagdensitybis_{\varphi, \, \varphi_{t_0}}(t) \right)
	&+
	\left( \vphantom{\sum} (\mathcal{L}_{\varphi, \, \lambda_r} \, \lagdensitybis_{\varphi, \, \varphi_{t_0}})(t) \mid \lagdensitybis_{\varphi, \, \varphi_{t_0}}(t) \right)\\
	\nonumber
	&=
	\int_{M_{t_0}} e^{-\lambda_r t} \, R\!\left( \frac{e^{\lambda_r t} \, \lagdensitybis(t)}{\jac \varphi(t)} \right) \jac\varphi(t) \, \lagdensitybis_{\varphi, \, \varphi_{t_0}}(t) \, dx
	\nonumber
	\\[5pt]
	&\leq
	\frac{\genC_r}{2} \, \|R\|_\infty^2 \, \mathrm{vol}(M_{t_0})
	+
	\frac{1}{2} \, \|\lagdensitybis_{\varphi, \, \varphi_{t_0}}(t)\|_{L^2}^2 .
	\label{eq:estimate_dtu_L2_2_1}
\end{align}

Using Lemma~\ref{lemma:dt_norm_squared} and the coercivity of $\mathcal{L}_{\varphi, \, \lambda_r}$ we then get
\begin{align}
	\frac{1}{2} \left( \partial_t \hspace{1pt} \|\lagdensitybis_{\varphi, \, \varphi_{t_0}}(\cdot)\|_{L^2}^2 \right)(t)
	&=\left( \vphantom{\sum} (\partial_t \hspace{1pt} \lagdensitybis_{\varphi, \, \varphi_{t_0}})(t) \mid \lagdensitybis_{\varphi, \, \varphi_{t_0}}(t) \right) \nonumber \\
	&\leq
	\frac{\genC_r}{2} \, \|R\|_\infty^2 \, \mathrm{vol}(M_{t_0})
	+
	\frac{1}{2} \, \|\lagdensitybis_{\varphi, \, \varphi_{t_0}}(t)\|_{L^2}^2 .
	\label{eq:estimate_dtu_L2_2_2}
\end{align}
It follows that
\begin{align*}
	\|\lagdensitybis_{\varphi, \, \varphi_{t_0}}(t)\|_{L^2}^2
	&=
	\|\lagdensitybis_{\varphi, \, \varphi_{t_0}}(t_0)\|_{L^2}^2 + \int_{t_0}^t \left( \partial_t \hspace{1pt} \|\lagdensitybis_{\varphi, \, \varphi_{t_0}}(\cdot)\|_{L^2}^2 \right)(s) \, ds
	\\[3pt]
	&\leq
	\|\lagdensitybis(t_0)\|_{L^2}^2 + \genC[r] \, T \, \|R\|_\infty^2 \, \mathrm{vol}(M_{t_0})
	+
	\int_{t_0}^t \|\lagdensitybis_{\varphi, \, \varphi_{t_0}}(s)\|_{L^2}^2 \, ds ,
\end{align*}
so Gronwall's lemma gives
\begin{equation}
	\label{eq:estimate_u}
	\|\lagdensitybis_{\varphi, \, \varphi_{t_0}}(t)\|_{L^2}
	\leq
	\left( \|\lagdensitybis(t_0)\|_{L^2}^2 + \genC[r] \, T \, \|R\|_\infty^2 \, \mathrm{vol}(M_{t_0}) \right) e^T
	=
	\genC[r,\varphi_{t_0}],
\end{equation}
since $M_{t_0} = \varphi_{t_0}(M_0)$. 

Now, using \eqref{eq:estimate_dtu_L2_2_1}
	and \eqref{eq:coercivity_L}, we obtain
\[
	\alpha_r \, \|\lagdensitybis_{\varphi, \, \varphi_{t_0}}(t)\|_{H^1(M_{t_0})}^2
	\leq
	\frac{\genC_r}{2} \, \|R\|_\infty^2 \, \mathrm{vol}(M_{t_0})
	+
	\frac{1}{2} \, \|\lagdensitybis_{\varphi, \, \varphi_{t_0}}(t)\|_{L^2}^2
	-
	\frac{1}{2} \left( \partial_t \hspace{1pt} \|\lagdensitybis_{\varphi, \, \varphi_{t_0}}(\cdot)\|_{L^2}^2 \right)(t) .
\]

Integrating on $[t_0, t_0+\eta]$, using Lemma~\ref{lemma:dt_norm_squared} and \eqref{eq:estimate_u}, we obtain
\begin{align*}
	&\hspace{14pt}
	\alpha_r \, \int_{t_0}^{t_0+\eta} \|\lagdensitybis_{\varphi, \, \varphi_{t_0}}(t)\|_{H^1(M_{t_0})}^2 \, dt
	\\[3pt]
	&\leq
	\frac{\genC[r]T}{2} \, \|R\|_\infty^2 \, \mathrm{vol}(M_{t_0})
	+
	\frac{1}{2} \int_{t_0}^{t_0+\eta} \|\lagdensitybis_{\varphi, \, \varphi_{t_0}}(t)\|_{L^2}^2 \, dt
	-
	\frac{1}{2} \left( \vphantom{\sum} \|\lagdensitybis_{\varphi, \, \varphi_{t_0}}(t_0 + \eta)\|_{L^2}^2 - \|\lagdensitybis_{\varphi, \, \varphi_{t_0}}(t_0)\|_{L^2}^2 \right)
	\\[3pt]
	&\leq
	\frac{\genC[r]T}{2} \, \|R\|_\infty^2 \, \mathrm{vol}(M_{t_0}) + \left( \frac{T}{2} + 1 \right) \genC[r,\varphi_{t_0}] ,
\end{align*}
that is 
\[
\int_{t_0}^{t_0+\eta} \|\lagdensitybis_{\varphi, \, \varphi_{t_0}}(t)\|_{H^1(M_{t_0})}^2 \, dt \leq \genC[r,\varphi_{t_0}].
\]
\end{proof}
This leads to the following Lipschitz regularity of $\lagdensity_{\varphi, \, \varphi_{t_0}}(t)$ with respect to $\varphi$.
\begin{lemma}
\label{lemma:tau_lipschitz}
For almost all $t \in [t_0, t_0+\eta]$,
\[
	\|\lagdensity_{\varphi, \, \varphi_{t_0}}(t) - \lagdensity_{\psi, \, \varphi_{t_0}}(t)\|_{L^2(M_{t_0})}
	\leq
	\genC[r,\varphi_{t_0}] \, \sup_{s \,\in\, [t_0, t_0+\eta]} \|\varphi(s) - \psi(s)\|_{\reg, \infty}
\]
\end{lemma}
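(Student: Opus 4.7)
The plan is to work with the weighted differences $w(t) \colonequals \lagdensitybis_{\varphi, \, \varphi_{t_0}}(t) - \lagdensitybis_{\psi, \, \varphi_{t_0}}(t)$ of the rescaled solutions. Since both rescaled solutions satisfy the same initial condition $e^{-\lambda_r t_0} \priodensity_{t_0}$ at $t_0$, we have $w(t_0) = 0$, and subtracting the two weak equations gives
\[
\partial_t w + \mathcal{L}_{\varphi, \, \lambda_r} w = (\mathcal{L}_{\psi, \, \lambda_r} - \mathcal{L}_{\varphi, \, \lambda_r}) \, \lagdensitybis_{\psi, \, \varphi_{t_0}} + \big(g_{\lambda_r, \, \lagdensitybis_{\varphi, \, \varphi_{t_0}}} - g_{\lambda_r, \, \lagdensitybis_{\psi, \, \varphi_{t_0}}}\big)
\]
in $L^2([t_0, t_0+\eta], H^1(M_{t_0})^*)$. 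I would then pair this equation with $w(t) \in H^1(M_{t_0})$ and apply Lemma~\ref{lemma:dt_norm_squared} together with the coercivity estimate \eqref{eq:coercivity_L} to obtain, for a.e. $t$,
\[
\tfrac{1}{2} \, \partial_t \|w(t)\|_{L^2}^2 + \alpha_r \, \|w(t)\|_{H^1(M_{t_0})}^2 \leq \big|\big((\mathcal{L}_{\psi, \, \lambda_r} - \mathcal{L}_{\varphi, \, \lambda_r}) \, \lagdensitybis_{\psi, \, \varphi_{t_0}}(t) \mid w(t)\big)\big| + \big|\big(g_\varphi(t) - g_\psi(t) \mid w(t)\big)\big| .
\]

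For the first term on the right, Lemma~\ref{lemma:pde_L_lipschitz} gives an upper bound by $\genC_r \, \|\varphi(t) - \psi(t)\|_{\reg, \infty} \, \|\lagdensitybis_{\psi, \, \varphi_{t_0}}(t)\|_{H^1(M_{t_0})} \, \|w(t)\|_{H^1(M_{t_0})}$, and Young's inequality lets me absorb $\|w\|_{H^1}^2$ into the coercivity term on the left at the cost of a factor $\genC_r \, \|\varphi - \psi\|_{\reg, \infty}^2 \, \|\lagdensitybis_{\psi, \, \varphi_{t_0}}\|_{H^1}^2$. For the reaction term difference I would decompose
\[
g_\varphi - g_\psi = e^{-\lambda_r t}\Big[\reaction\Big(\tfrac{e^{\lambda_r t} \lagdensitybis_{\varphi, \, \varphi_{t_0}}}{\jac\varphi}\Big) - \reaction\Big(\tfrac{e^{\lambda_r t} \lagdensitybis_{\psi, \, \varphi_{t_0}}}{\jac\varphi}\Big)\Big]\jac\varphi + e^{-\lambda_r t}\Big[\reaction\Big(\tfrac{e^{\lambda_r t} \lagdensitybis_{\psi, \, \varphi_{t_0}}}{\jac\varphi}\Big) \jac\varphi - \reaction\Big(\tfrac{e^{\lambda_r t} \lagdensitybis_{\psi, \, \varphi_{t_0}}}{\jac\psi}\Big) \jac\psi\Big]
\]
and use boundedness plus Lipschitz continuity of $\reaction$, together with the fact that $\jac\varphi$, $1/\jac\varphi$ are uniformly bounded on $B_r$ and that $\varphi \mapsto \jac\varphi$ is Lipschitz for the $\|\cdot\|_{\reg,\infty}$ norm (since $\reg \geq 2$), to conclude that $\|g_\varphi(t) - g_\psi(t)\|_{L^2(M_{t_0})} \leq \genC_r \, \|w(t)\|_{L^2} + \genC_r \, \|\varphi(t) - \psi(t)\|_{\reg, \infty}$. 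A further Cauchy-Schwarz and Young step then yields
\[
\partial_t \|w(t)\|_{L^2}^2 \leq \genC_r \, \|w(t)\|_{L^2}^2 + \genC_r \, \|\varphi(t) - \psi(t)\|_{\reg, \infty}^2 \big(1 + \|\lagdensitybis_{\psi, \, \varphi_{t_0}}(t)\|_{H^1(M_{t_0})}^2\big) .
\]

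Integrating from $t_0$ (where $w$ vanishes) and invoking Lemma~\ref{lemma:estimate_u_L2_H1} to bound $\int_{t_0}^{t_0+\eta} \|\lagdensitybis_{\psi, \, \varphi_{t_0}}\|_{H^1}^2 \, dt \leq \genC[r, \varphi_{t_0}]$, I obtain $\|w(t)\|_{L^2}^2 \leq \genC[r, \varphi_{t_0}] \sup_{s} \|\varphi(s) - \psi(s)\|_{\reg, \infty}^2 + \genC_r \int_{t_0}^t \|w(s)\|_{L^2}^2 \, ds$, and Gronwall's inequality closes the estimate. Switching back from $\lagdensitybis$ to $\lagdensity = e^{\lambda_r t} \lagdensitybis$ only affects the constant. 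The main delicate point is handling the reaction-term difference cleanly, because both the argument of $\reaction$ and the multiplicative Jacobian depend on the deformation; the splitting above isolates the $w$-dependence from the $\varphi - \psi$-dependence, after which Gronwall produces the desired Lipschitz estimate.
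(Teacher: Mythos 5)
Your proposal is correct and follows essentially the same route as the paper: subtract the two rescaled weak equations, pair with the difference $w$, invoke Lemma~\ref{lemma:dt_norm_squared}, the coercivity bound \eqref{eq:coercivity_L} and Lemma~\ref{lemma:pde_L_lipschitz}, split the reaction-term difference into a Lipschitz-in-$w$ piece and a $\varphi-\psi$ piece, then absorb via Young, integrate from $t_0$ (where $w=0$), use Lemma~\ref{lemma:estimate_u_L2_H1} and conclude by Gronwall. The only cosmetic difference is that you write out the reaction-term splitting explicitly, whereas the paper states the resulting bound directly.
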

\begin{proof}
Since
\[
	\|\lagdensity_{\varphi, \, \varphi_{t_0}}(t) - \lagdensity_{\psi, \, \varphi_{t_0}}(t)\|_{L^2(M_{t_0})}
	=
	\left\|
		e^{\lambda_r t} \, \lagdensitybis_{\varphi, \, \varphi_{t_0}}\!(t)
		-
		e^{\lambda_r t} \, \lagdensitybis_{\psi, \, \varphi_{t_0}}\!(t)
	\right\|_{L^2(M_{t_0})} ,
\]
it suffices to show that
\[
	\|\lagdensitybis_{\varphi, \, \varphi_{t_0}}(t) - \lagdensitybis_{\psi, \, \varphi_{t_0}}(t)\|_{L^2(M_{t_0})}
	\leq
	\genC[r,\varphi_{t_0}] \, \sup_{s \,\in\, [t_0, t_0+\eta]} \|\varphi(s) - \psi(s)\|_{\reg, \infty} .
\]
Recall that $\lagdensitybis_{\varphi, \, \varphi_{t_0}}$ and $\lagdensitybis_{\psi, \, \varphi_{t_0}}$ satisfy
\[
	\left\{
		\begin{array}{l}
		\displaystyle
		(\partial_t \hspace{1pt} \lagdensitybis_{\varphi, \, \varphi_{t_0}})(t)
		+
		(\mathcal{L}_{\varphi, \, \lambda_r} \, \lagdensitybis_{\varphi, \, \varphi_{t_0}})(t)
		=
		e^{-\lambda_r t} \, R\!\left( \frac{e^{\lambda_r t} \, \lagdensitybis_{\varphi, \, \varphi_{t_0}}(t)}{\jac \varphi(t)} \right) \jac\varphi(t)
		\ \ \mbox{ for almost every } t
		\\
		\lagdensitybis_{\varphi, \, \varphi_{t_0}}(t_0) = e^{-\lambda_r t_0} \, \lagdensity_{t_0}
		\end{array}
	\right.
\]
and
\[
	\left\{
		\begin{array}{l}
		\displaystyle
		(\partial_t \hspace{1pt} \lagdensitybis_{\psi, \, \varphi_{t_0}})(t)
		+
		(\mathcal{L}_{\psi, \, \lambda_r} \, \lagdensitybis_{\psi, \, \varphi_{t_0}})(t)
		=
		e^{-\lambda_r t} \, R\!\left( \frac{e^{\lambda_r t} \, \lagdensitybis_{\psi, \, \varphi_{t_0}}(t)}{\jac \psi(t)} \right) \jac\psi(t)
		\ \ \mbox{ for almost every } t
		\\
		\lagdensitybis_{\psi, \, \varphi_{t_0}}(t_0) = e^{-\lambda_r t_0} \, \lagdensity_{t_0}
		\end{array}
	\right. .
\]
Lemma~\ref{lemma:dt_norm_squared} and coercivity of $\mathcal{L}_{\varphi, \, \lambda_r}$ again give
\begin{align*}
	&\hspace{14pt}
	\frac{1}{2}\left( \partial_t \hspace{1pt} \|(\lagdensitybis_{\varphi, \, \varphi_{t_0}} - \lagdensitybis_{\psi, \, \varphi_{t_0}})(\cdot)\|_{L^2(M_{t_0})}^2 \right)(t)
	+
	\alpha_r \, \|(\lagdensitybis_{\varphi, \, \varphi_{t_0}} - \lagdensitybis_{\psi, \, \varphi_{t_0}})(t)\|_{H^1(M_{t_0})}^2
	\\[8pt]
	&\leq
	\left( \vphantom{\sum}
		\big( \partial_t \hspace{1pt} (\lagdensitybis_{\varphi, \, \varphi_{t_0}} - \lagdensitybis_{\psi, \, \varphi_{t_0}}) \big)(t)
		\mid
		(\lagdensitybis_{\varphi, \, \varphi_{t_0}} - \lagdensitybis_{\psi, \, \varphi_{t_0}})(t)
	\right)
    +
	\left( \vphantom{\sum} \big(\mathcal{L}_{\varphi, \, \lambda_r} \, (\lagdensitybis_{\varphi, \, \varphi_{t_0}} - \lagdensitybis_{\psi, \, \varphi_{t_0}})\big)(t) \mid (\lagdensitybis_{\varphi, \, \varphi_{t_0}} - \lagdensitybis_{\psi, \, \varphi_{t_0}})(t) \right)
	\\[8pt]
	&=
	-\left( \vphantom{\sum} \big((\mathcal{L}_{\varphi, \, \lambda_r} - \mathcal{L}_{\psi, \, \lambda_r}) \, \lagdensitybis_{\psi, \, \varphi_{t_0}} \big)(t) \mid (\lagdensitybis_{\varphi, \, \varphi_{t_0}} - \lagdensitybis_{\psi, \, \varphi_{t_0}})(t) \right)
	\\
	&\hspace{14pt}
	\phantom{}
	+
	e^{-\lambda_r t} \left\langle R\!\left( \frac{e^{\lambda_r t} \, \lagdensitybis_{\varphi, \, \varphi_{t_0}}(t)}{\jac \varphi(t)} \right)\jac \varphi(t) - R\!\left( \frac{e^{\lambda_r t} \, \lagdensitybis_{\psi, \, \varphi_{t_0}}(t)}{\jac \psi(t)} \right)\jac \psi(t) , (\lagdensitybis_{\varphi, \, \varphi_{t_0}} - \lagdensitybis_{\psi, \, \varphi_{t_0}})(t) \right\rangle_{L^2(M_{t_0})}
\end{align*}
and using Lemma \ref{lemma:pde_L_lipschitz}, we find
\begin{align*}
	&\hspace{14pt}
	\frac{1}{2}\left( \partial_t \hspace{1pt} \|(\lagdensitybis_{\varphi, \, \varphi_{t_0}} - \lagdensitybis_{\psi, \, \varphi_{t_0}})(\cdot)\|_{L^2(M_{t_0})}^2 \right)(t)
	+
	\alpha_r \, \|(\lagdensitybis_{\varphi, \, \varphi_{t_0}} - \lagdensitybis_{\psi, \, \varphi_{t_0}})(t)\|_{H^1(M_{t_0})}^2
	\\[8pt]
	&\leq
	\genC[r] \ \|\varphi(t) - \psi(t)\|_{\reg, \infty} \ \|\lagdensitybis_{\psi, \, \varphi_{t_0}}(t)\|_{H^1(M_{t_0})} \ \|(\lagdensitybis_{\varphi, \, \varphi_{t_0}} - \lagdensitybis_{\psi, \, \varphi_{t_0}})(t)\|_{H^1(M_{t_0})}
	\\
	&\hspace{14pt}
	\phantom{}
	+
	\genC[r] \left( \vphantom{\sum} \|(\lagdensitybis_{\varphi, \, \varphi_{t_0}} - \lagdensitybis_{\psi, \, \varphi_{t_0}})(t)\|_{L^2(M_{t_0})} + \|\varphi(t) - \psi(t)\|_{\reg, \infty} \right) \|(\lagdensitybis_{\varphi, \, \varphi_{t_0}} - \lagdensitybis_{\psi, \, \varphi_{t_0}})(t)\|_{L^2(M_{t_0})}
	\\[8pt]
	&\leq
	\genC[r] \left( \frac{1}{2\varepsilon} \, \|\varphi(t) - \psi(t)\|_{\reg, \infty}^2 \ \|\lagdensitybis_{\psi, \, \varphi_{t_0}}(t)\|_{H^1(M_{t_0})}^2 + \frac{\varepsilon}{2} \, \|(\lagdensitybis_{\varphi, \, \varphi_{t_0}} - \lagdensitybis_{\psi, \, \varphi_{t_0}})(t)\|_{H^1(M_{t_0})}^2 \right)
	\\
	&\hspace{14pt}
	\phantom{}
	+
	\genC[r] \left( \|(\lagdensitybis_{\varphi, \, \varphi_{t_0}} - \lagdensitybis_{\psi, \, \varphi_{t_0}})(t)\|_{L^2(M_{t_0})}^2 + \frac{1}{2} \, \|\varphi(t) - \psi(t)\|_{\reg, \infty}^2 + \frac{1}{2} \, \|(\lagdensitybis_{\varphi, \, \varphi_{t_0}} - \lagdensitybis_{\psi, \, \varphi_{t_0}})(t)\|_{L^2(M_{t_0})}^2 \right) .
\end{align*}
By choosing $\varepsilon > 0$ such that $\genC[r]\varepsilon/2 < \alpha_r$, we obtain
\begin{align*}
	&\hspace{13pt}
	\left( \partial_t \hspace{1pt} \|(\lagdensitybis_{\varphi, \, \varphi_{t_0}} - \lagdensitybis_{\psi, \, \varphi_{t_0}})(\cdot)\|_{L^2(M_{t_0})}^2 \right)(t)
	\\
	&\leq
	\|\varphi(t) - \psi(t)\|_{\reg, \infty}^2 \left( \genC[r] + \frac{\genC[r]}{\varepsilon} \, \|\lagdensitybis_{\psi, \, \varphi_{t_0}}(t)\|_{H^1(M_{t_0})}^2 \right)
	+
    3 \, \genC[r] \, \|(\lagdensitybis_{\varphi, \, \varphi_{t_0}} - \lagdensitybis_{\psi, \, \varphi_{t_0}})(t)\|_{L^2(M_{t_0})}^2
	\\
	&\leq
	\genC[r]
	\left(
		\|\varphi(t) - \psi(t)\|_{\reg, \infty}^2 \left( 1 + \|\lagdensitybis_{\psi, \, \varphi_{t_0}}(t)\|_{H^1(M_{t_0})}^2 \right)
		+
		\|(\lagdensitybis_{\varphi, \, \varphi_{t_0}} - \lagdensitybis_{\psi, \, \varphi_{t_0}})(t)\|_{L^2(M_{t_0})}^2
	\right) .
\end{align*}
Thus for almost all $t \in [t_0, t_0+\eta]$
\begin{align*}
	&\hspace{14pt}
	\|(\lagdensitybis_{\varphi, \, \varphi_{t_0}} - \lagdensitybis_{\psi, \, \varphi_{t_0}})(t)\|_{L^2(M_{t_0})}^2
	\\
	&=
	\|(\lagdensitybis_{\varphi, \, \varphi_{t_0}} - \lagdensitybis_{\psi, \, \varphi_{t_0}})(t_0)\|_{L^2(M_{t_0})}^2
	+
	\int_{t_0}^t \left( \partial_t \hspace{1pt} \|(\lagdensitybis_{\varphi, \, \varphi_{t_0}} - \lagdensitybis_{\psi, \, \varphi_{t_0}})(\cdot)\|_{L^2(M_{t_0})}^2 \right)(s) \, ds
	\\[5pt]
	&\leq
	0
	+
	\genC[r] \, \|\varphi - \psi\|_\infty^2 \left( \eta + \int_{[t_0, t_0+\eta]} \|\lagdensitybis_{\psi, \, \varphi_{t_0}}(t)\|_{H^1(M_{t_0})}^2 \, dt \right)
	+
	\genC[r] \int_{t_0}^t \|(\lagdensitybis_{\varphi, \, \varphi_{t_0}} - \lagdensitybis_{\psi, \, \varphi_{t_0}})(s)\|_{L^2(M_{t_0})}^2 \, ds .
\end{align*}
We conclude by Lemma~\ref{lemma:estimate_u_L2_H1} and Gronwall's inequality that
\[
	\|(\lagdensitybis_{\varphi, \, \varphi_{t_0}} - \lagdensitybis_{\psi, \, \varphi_{t_0}})(t)\|_{L^2} \leq \genC[r,\varphi_{t_0}] \, \|\varphi - \psi\|_\infty .
\]
\end{proof}

\section{Proof of Theorem \ref{thm:main}}
\label{sec:theorem_proof}
We now move on to the proof of the main result. We will first prove that a unique solution to \eqref{eq:PDELDDMM} and \eqref{eq:PDELDDMM_bd_conditions} exists locally using again a  fixed-point argument before finally showing that the solution is defined on $[0,T]$. As done in the previous section, let us again consider $t_0 \in [0,T)$ and $\varphi_{t_0} \in \D^\reg(\mathbb{R}^\dm)$, $\priodensity_{t_0} \in L^2(M_{t_0})$. By the assumption on $A$, there exist $r>0$ and $\ell_A >0$ both depending on $\varphi_{t_0}$ such that, letting $B_r =\overbar{B}_{r}(\mathit{id})$, we have $\{\varphi \circ \varphi_{t_0}: \varphi \in B\} \subset \D^\reg(\mathbb{R}^\dm)$ and
\[
	\|A_{\varphi \,\circ\, \varphi_{t_0}} - A_{\psi \,\circ\, \varphi_{t_0}}\|_{\mathscr{L}(V, \, V^*)}
	\leq
	\ell_A \, \|\varphi \circ \varphi_{t_0} - \psi \circ \varphi_{t_0}\|_{\reg, \infty} 
	\ \ \mbox{ for all } \ \varphi, \psi \in B_r .
\]
Considering an arbitrary interval $[t_0, t_0 + \eta] \subset [0, T]$, let $S_{\eta, \, \varphi_{t_0}} = C([t_0, t_0+\eta], \, B_r)$ and define $\varGamma_{\eta}: S_{\eta, \, \varphi_{t_0}} \rightarrow C([t_0, t_0+\eta], \, \mathit{id} + C_0^\reg(\mathbb{R}^\dm, \mathbb{R}^\dm))$ by
\begin{equation}
\label{eq:def_Gamma}
	\varGamma_{\eta}(\varphi)(t) = \mathit{id} + \int_{t_0}^t v_{\varphi, \, \varphi_{t_0}}(s) \circ \varphi(s) \, ds ,
\end{equation}
where
\[
	\left\{
		\begin{array}{l}
			v_{\varphi, \, \varphi_{t_0}}(s) = (\omega \, K_V^{-1} + A_{\varphi(s) \,\circ\, \varphi_{t_0}})^{-1} \hspace{1pt} \jerk_{\varphi, \, \varphi_{t_0}}(s) \\[5pt]
			\displaystyle (\jerk_{\varphi, \, \varphi_{t_0}}(s) \mid v') = \int_{\varphi(s, \, \varphi_{t_0}(M_0))} \chi \ \forcefunction(\lagdensity_{\varphi, \, \varphi_{t_0}}(s) \circ \varphi^{-1}(s)) \, (-\mathrm{div} \, v') \, dx
		\end{array}
	\right. .
\]
and $\lagdensity_{\varphi, \, \varphi_{t_0}}$ is the solution of \eqref{eq:reaction_diffusion_ivp} given by Theorem \ref{thm:reaction_diffusion_fixed_phi}.

For the mapping $\varGamma_{\eta}$ to be well-defined, one needs to show that the integral in \eqref{eq:def_Gamma} is finite, which is justified by Lemma \ref{lemma:j_uniformly_bounded} below. For the different proofs that follow, we shall recall first a few results on vector fields, flows and diffeomorphisms.
\begin{proposition}
\label{prop:bounds_v_phi}
Let $u,u'\in V$ and $\varphi,\psi \in C([t_0, t_0+\eta], B_r)$. For all $t \in [t_0, t_0+\eta]$, it holds that
\begin{enumerate}[label = (\roman*), ref = \ref{lemma:inequalities}(\roman*)]
\item $\|u\circ \varphi(t)\|_{\reg,\infty} \leq \genC[r] \|u\|_{\reg,\infty}$,
\item $\|u\circ \varphi(t) - u\circ\psi(t)\|_{\reg,\infty} \leq \genC[r] \|u\|_{\reg+1,\infty} \|\varphi(t) - \psi(t)\|_{\reg,\infty}$,
\item $\|u\circ \varphi(t) - u'\circ\varphi(t)\|_{\reg,\infty} \leq \genC[r] \|u - u'\|_{\reg,\infty}$.
\end{enumerate}
\end{proposition}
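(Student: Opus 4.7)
The plan is to derive all three bounds from the Fa\`a di Bruno formula for derivatives of a composition, combined with the observation that for $\varphi\in B_r$, the bound $\|\varphi-\id\|_{\reg,\infty}\leq r$ implies $\|D\varphi\|_\infty\leq 1+r$ and $\|D^i\varphi\|_\infty\leq r$ for $2\leq i\leq \reg$, so every derivative of $\varphi$ up to order $\reg$ is controlled by a constant $\genC[r]$ uniformly in $t\in[t_0,t_0+\eta]$.

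For statement (i), I would expand
\[
D^k(u\circ\varphi)(x) \;=\; \sum_{j=1}^k \sum_{\pi} (D^j u)(\varphi(x))\cdot \Pi_\pi\bigl(D\varphi(x),\ldots,D^k\varphi(x)\bigr),
\]
where $\pi$ ranges over the Fa\`a di Bruno partitions of $\{1,\ldots,k\}$ and $\Pi_\pi$ is a universal polynomial in the indicated derivatives. Since every factor $\|D^i\varphi\|_\infty$ is bounded by $\genC[r]$, the triangle inequality gives $\|D^k(u\circ\varphi)\|_\infty \leq \genC[r]\sum_{j=0}^k\|D^j u\|_\infty$, and summing over $k=0,\ldots,\reg$ yields (i). Statement (iii) follows immediately by applying (i) to $u-u'$, since $\varphi$ is the same on both sides.

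The real work is in (ii). I would write the same Fa\`a di Bruno expansion for $u\circ\varphi$ and $u\circ\psi$, subtract, and split each pair of terms via the elementary telescope $AB-A'B' = (A-A')B + A'(B-B')$. For $A=(D^j u)\circ\varphi$ and $A'=(D^j u)\circ\psi$, the mean value inequality gives $\|A-A'\|_\infty \leq \|D^{j+1}u\|_\infty\,\|\varphi-\psi\|_\infty$. For $B=\Pi_\pi(D\varphi,\ldots,D^k\varphi)$ and $B'=\Pi_\pi(D\psi,\ldots,D^k\psi)$, the difference is that of a polynomial evaluated on uniformly bounded arguments, and a further telescope controls it by $\genC[r]\sum_{i=1}^k\|D^i\varphi-D^i\psi\|_\infty \leq \genC[r]\,\|\varphi-\psi\|_{\reg,\infty}$, multiplied by $\|D^j u\|_\infty$. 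Combining the two pieces leads to $\|D^k(u\circ\varphi)-D^k(u\circ\psi)\|_\infty \leq \genC[r]\,\|u\|_{k+1,\infty}\,\|\varphi-\psi\|_{\reg,\infty}$, and summing over $k\leq\reg$ gives (ii).

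The proof is a routine exercise in multivariate chain rule and mean-value estimates; the one point worth flagging is that at the top order $k=\reg$ the estimate of $(D^\reg u)\circ\varphi-(D^\reg u)\circ\psi$ requires the $(\reg+1)$-th derivative of $u$, which is precisely why Theorem \ref{thm:main} strengthens the embedding assumption to $V\hookrightarrow C_0^{\reg+1}(\R^\dm,\R^\dm)$ rather than merely $C_0^\reg$. No deep obstacle arises; the only care needed is the combinatorial bookkeeping of the Fa\`a di Bruno partitions to verify that no term ever calls for a derivative of $u$ of order higher than $\reg+1$.
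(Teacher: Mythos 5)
Your argument is correct and follows exactly the route the paper takes: the paper's proof simply invokes the Fa\`a di Bruno formula (citing Section 7.1 of Younes's book) for all three inequalities, and your write-up is the standard fleshed-out version of that, including the correct observation that (ii) costs one extra derivative of $u$, which is why the statement carries $\|u\|_{\reg+1,\infty}$ and why Theorem \ref{thm:main} assumes $V\hookrightarrow C_0^{\reg+1}(\R^\dm,\R^\dm)$.
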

\begin{proof}
All these inequalities follow from the Fa\`{a} di Bruno's formula on higher order derivatives of composition of two functions. They can be found e.g., in \cite{younes2019shapes} section 7.1.   
\end{proof}

Furthermore, one has the following controls on $\jerk_{\varphi, \, \varphi_{t_0}}$ and $v_{\varphi, \, \varphi_{t_0}}$, which are simply the generalization of the estimates of section 6.2 in \cite{hsieh2020mechanistic} for $m=2$:
\begin{proposition}
\label{prop:bound_v_j}
Let $\varphi,\psi \in C([t_0, t_0+\eta], B_r)$. Then for all $t \in [t_0, t_0+\eta]$, we have 
\begin{enumerate}[label = (\roman*), ref = \ref{lemma:inequalities}(\roman*)]
\item $\|\jerk_{\varphi, \, \varphi_{t_0}}(t)\|_{V^*} \leq c_V \, \|\forcefunction\|_\infty \, \|\chi\|_{L^1} \colonequals J$
\item $\|v_{\varphi, \, \varphi_{t_0}}(t)\|_{\reg+1, \infty} \leq  \frac{c_V}{\omega} \|\jerk_{\varphi, \, \varphi_{t_0}}(t)\|_{V^*} \leq \frac{c_V}{\omega} J$
\item $\|v_{\varphi, \, \varphi_{t_0}}(t) - v_{\psi, \, \varphi_{t_0}}(t)\|_{\reg, \infty} \leq \frac{J}{\omega^2} \ell_A \|\varphi(t) \circ \varphi_{t_0} - \psi(t) \circ \varphi_{t_0}\|_{\reg, \infty} + \frac{c_V}{\omega} \, \|\jerk_{\varphi, \, \varphi_{t_0}}(s) - \jerk_{\psi, \, \varphi_{t_0}}(s)\|_{V^*}$
\end{enumerate}
\end{proposition}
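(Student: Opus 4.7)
The plan is to prove the three estimates in sequence, each following from elementary variational arguments that exploit the positive semi-definiteness of $\elastictensor$ and the defining characterization of $v_{\varphi, \, \varphi_{t_0}}$ as the minimizer in \eqref{eq:vf.var}, i.e., the equation
\[
(\omega L_V + \elastictensor_{\varphi(t) \,\circ\, \varphi_{t_0}}) \, v_{\varphi, \, \varphi_{t_0}}(t) = \jerk_{\varphi, \, \varphi_{t_0}}(t) \quad \text{in } V^* .
\]

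\textbf{Step 1 (bound on $\jerk$).} For (i), I would work directly from the defining integral expression of $\jerk_{\varphi, \, \varphi_{t_0}}(t)$. Given $v' \in V$ with $\|v'\|_V \leq 1$, I bound $|\forcefunction(\cdots)| \leq \|\forcefunction\|_\infty$ pointwise, and bound $|\mathrm{div}\, v'(x)| \leq \|v'\|_{1,\infty} \leq \|v'\|_{\reg,\infty} \leq c_V \|v'\|_V$ uniformly on $\R^\dm$ using \eqref{eq:cv}. Factoring these outside, the integral of $|\chi|$ produces $\|\chi\|_{L^1}$, and taking the supremum over such $v'$ gives the $V^*$-bound $J = c_V \|\forcefunction\|_\infty \|\chi\|_{L^1}$.

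\textbf{Step 2 (bound on $v$).} For (ii), I would pair the defining equation with $v_{\varphi, \, \varphi_{t_0}}(t)$ itself:
\[
\omega \, \|v_{\varphi, \, \varphi_{t_0}}(t)\|_V^2 + (\elastictensor_{\varphi(t) \,\circ\, \varphi_{t_0}} \, v_{\varphi, \, \varphi_{t_0}}(t) \mid v_{\varphi, \, \varphi_{t_0}}(t)) = (\jerk_{\varphi, \, \varphi_{t_0}}(t) \mid v_{\varphi, \, \varphi_{t_0}}(t)) .
\]
Since $\elastictensor$ is positive semi-definite, the second term on the left is non-negative and can be dropped; by Cauchy--Schwarz the right-hand side is bounded by $\|\jerk_{\varphi, \, \varphi_{t_0}}(t)\|_{V^*} \|v_{\varphi, \, \varphi_{t_0}}(t)\|_V$. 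Dividing yields $\|v_{\varphi, \, \varphi_{t_0}}(t)\|_V \leq \omega^{-1} \|\jerk_{\varphi, \, \varphi_{t_0}}(t)\|_{V^*}$, and then the continuous embedding $V \hookrightarrow C_0^{\reg+1}(\R^\dm, \R^\dm)$ promotes this to the $\|\cdot\|_{\reg+1,\infty}$ estimate (with the same constant $c_V$ used above), chained with (i) to finish.

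\textbf{Step 3 (Lipschitz control on $v$).} For (iii), I would write the defining equation at both $\varphi$ and $\psi$ and subtract to obtain
\[
(\omega L_V + \elastictensor_{\varphi(t) \,\circ\, \varphi_{t_0}}) (v_{\varphi, \, \varphi_{t_0}}(t) - v_{\psi, \, \varphi_{t_0}}(t)) = (\jerk_{\varphi, \, \varphi_{t_0}}(t) - \jerk_{\psi, \, \varphi_{t_0}}(t)) - (\elastictensor_{\varphi(t) \,\circ\, \varphi_{t_0}} - \elastictensor_{\psi(t) \,\circ\, \varphi_{t_0}}) \, v_{\psi, \, \varphi_{t_0}}(t) .
\]
Pairing with $v_{\varphi, \, \varphi_{t_0}}(t) - v_{\psi, \, \varphi_{t_0}}(t)$, dropping the non-negative elastic term on the left, and applying Cauchy--Schwarz on the right gives
\[
\omega \, \|v_{\varphi, \, \varphi_{t_0}}(t) - v_{\psi, \, \varphi_{t_0}}(t)\|_V \leq \|\jerk_{\varphi, \, \varphi_{t_0}}(t) - \jerk_{\psi, \, \varphi_{t_0}}(t)\|_{V^*} + \|\elastictensor_{\varphi(t) \,\circ\, \varphi_{t_0}} - \elastictensor_{\psi(t) \,\circ\, \varphi_{t_0}}\|_{\mathscr{L}(V, V^*)} \, \|v_{\psi, \, \varphi_{t_0}}(t)\|_V .
\]
Then I invoke the Lipschitz property of $\elastictensor$ with constant $\ell_A$ and substitute the bound $\|v_{\psi, \, \varphi_{t_0}}(t)\|_V \leq J/\omega$ from Step 2 into the second term. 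Dividing by $\omega$ and passing to $\|\cdot\|_{\reg,\infty}$ via the embedding constant $c_V$ yields the claimed estimate.

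\textbf{Expected obstacle.} None of the steps is technically delicate; the reasoning is essentially Lax--Milgram-style coercivity combined with linearity of the duality pairing. The only mild subtlety is bookkeeping of the constants $c_V$, $\omega$, $\ell_A$ and $J$ so that the exponents on $\omega$ in the final bound (iii) come out correctly as $1/\omega^2$ on the elastic term (where one $1/\omega$ arises from inverting $\omega L_V + \elastictensor$ and another from the a priori bound on $\|v_{\psi, \, \varphi_{t_0}}\|_V$) versus $1/\omega$ on the yank difference term.
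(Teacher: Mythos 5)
Your proposal is correct and follows essentially the same route as the paper: bound the yank directly from its integral definition, use positive semi-definiteness of $\elastictensorgeneric$ to get $\|(\omega L_V+\elastictensorgeneric)^{-1}\|_{\mathscr L(V^*,V)}\le 1/\omega$, and split the difference in (iii) into a yank-difference term and an operator-difference term controlled by $\ell_A$. The only cosmetic deviations are that the paper derives the $1/\omega$ bound by expanding $\|K_V(\omega K_V^{-1}+\elastictensorgeneric)v\|_V^2$ rather than testing the equation against $v$, and obtains your correction term in (iii) via the resolvent identity $L_\varphi^{-1}-L_\psi^{-1}=L_\varphi^{-1}(L_\psi-L_\varphi)L_\psi^{-1}$ instead of subtracting the two equations; these are equivalent computations.
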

\noindent (The constant $c_V$ was introduced in Equation \eqref{eq:cv}.)
\begin{proof}
\begin{enumerate}[label = (\roman*)]
\item For any $v' \in V$, we see that:
\begin{align*}
    |(\jerk_{\varphi, \, \varphi_{t_0}}(t) | v')| \leq  \int_{\varphi(s, \, \varphi_{t_0}(M_0))} |\chi| \ \|\forcefunction\|_{\infty} \, |\mathrm{div} \, v'| dx 
    &\leq \|\forcefunction\|_{\infty} \|v'\|_{1,\infty} \left(\int_{\varphi(s, \, \varphi_{t_0}(M_0))} |\chi| dx \right) \\ 
    &\leq c_V \|\forcefunction\|_{\infty} \|v'\|_{V} \|\chi\|_{L^1}.
\end{align*}
which thus leads to $\|\jerk_{\varphi, \, \varphi_{t_0}}(t)\|_{V^*} \leq J$.

\item We have $v_{\varphi, \, \varphi_{t_0}}(t) = L^{-1} \jerk_{\varphi, \, \varphi_{t_0}}(t)$ where $L \colonequals \omega \, K_V^{-1} + A_{\varphi(t) \,\circ\, \varphi_{t_0}}$ so to prove (ii), we first show that for all $v \in V$, $\|v\|_V \leq (1/\omega)\left\| L \, v \right\|_{V^*}$. Indeed
\begin{align*}
	\left( \frac{1}{\omega} \left\| L \, v \right\|_{V^*} \right)^2
	&=
	\left( \frac{1}{\omega} \left\| K_V \left( \omega K_V^{-1} + A_{\varphi(t) \,\circ\, \varphi_{t_0}} \right) v \right\|_V \right)^2
	\\
	&=
	\frac{1}{\omega^2} \left\| \omega \, v + K_V A_{\varphi(t) \,\circ\, \varphi_{t_0}} v \right\|_V^2
	\\
	&=
	\|v\|_V^2 + \frac{1}{\omega^2} \, \|K_V A_{\varphi(t) \,\circ\, \varphi_{t_0}} \hspace{1pt} v\|_V^2 + \frac{2}{\omega} \, \langle v, K_V A_{\varphi(t) \,\circ\, \varphi_{t_0}} \hspace{1pt} v \rangle_V
	\\
	&=
	\|v\|_V^2 + \frac{1}{\omega^2} \, \|K_V A_{\varphi(t) \,\circ\, \varphi_{t_0}} \hspace{1pt} v\|_V^2 + \frac{2}{\omega} \, (A_{\varphi(t) \,\circ\, \varphi_{t_0}} \hspace{1pt} v \mid v)
	\, \geq \,
	\|v\|_V^2 ,
\end{align*}
where the last inequality follows from the positive definiteness of the operator $A_{\varphi(s) \,\circ\, \varphi_{t_0}}$. Together with the assumption that $V \hookrightarrow C_0^{\reg+1}(\mathbb{R}^\dm, \mathbb{R}^\dm)$, it follows that:
\begin{equation*}
  \|v_{\varphi, \, \varphi_{t_0}}(t)\|_{\reg+1, \infty} \leq  c_V \|v_{\varphi, \, \varphi_{t_0}}(t)\|_{V} \leq \frac{c_V}{\omega} \|\jerk_{\varphi, \, \varphi_{t_0}}(t)\|_{V^*} \leq  \frac{c_V}{\omega} J.
\end{equation*}

\item Writing now $L_{\varphi} \colonequals \omega \, K_V^{-1} + A_{\varphi(t) \,\circ\, \varphi_{t_0}}$ and $L_{\psi} \colonequals \omega \, K_V^{-1} + A_{\psi(t) \,\circ\, \varphi_{t_0}}$, we have: 
\begin{align*}
  \|v_{\varphi, \, \varphi_{t_0}}(t) - v_{\psi, \, \varphi_{t_0}}(t)\|_{\reg,\infty} &= \|L_{\varphi}^{-1}\jerk_{\varphi, \, \varphi_{t_0}}(t) - L_{\psi}^{-1}\jerk_{\psi, \, \varphi_{t_0}}(t)\|_{\reg,\infty}  \\
  &\leq \|L_{\varphi}^{-1} \left(\jerk_{\varphi, \, \varphi_{t_0}}(t) - \jerk_{\psi, \, \varphi_{t_0}}(t) \right)\|_{\reg,\infty} + \|(L_{\varphi}^{-1}-L_{\psi}^{-1})\jerk_{\psi, \, \varphi_{t_0}}(t)\|_{\reg,\infty}. 
\end{align*}
Note that, from the proof of (ii), we obtain in particular $\|L_\varphi^{-1}\|_{\mathscr{L}(V^*\!, \, V)} \leq 1/\omega$ and therefore:
\begin{align*}
  \|L_{\varphi}^{-1} \left(\jerk_{\varphi, \, \varphi_{t_0}}(t) - \jerk_{\psi, \, \varphi_{t_0}}(t) \right)\|_{\reg,\infty} &\leq \|L_{\varphi}^{-1} \left(\jerk_{\varphi, \, \varphi_{t_0}}(t) - \jerk_{\psi, \, \varphi_{t_0}}(t) \right)\|_{\reg+1,\infty}\\
  &\leq c_V \|L_{\varphi}^{-1} \left(\jerk_{\varphi, \, \varphi_{t_0}}(t) - \jerk_{\psi, \, \varphi_{t_0}}(t) \right)\|_{V}\\
  &\leq  \frac{c_V}{\omega} \|\jerk_{\varphi, \, \varphi_{t_0}}(s) - \jerk_{\psi, \, \varphi_{t_0}}(s)\|_{V^*} 
\end{align*}
Moreover, using (i), $\|(L_{\varphi}^{-1}-L_{\psi}^{-1})\jerk_{\psi, \, \varphi_{t_0}}(t)\|_{\reg,\infty} \leq \|L_{\varphi}^{-1}-L_{\psi}^{-1}\|_{\mathscr{L}(V^*\!, \, V)} \, J$ and we also have:
\begin{align*}
	\|L_\varphi^{-1} - L_\psi^{-1}\|_{\mathscr{L}(V^*\!, \, V)}
	&=
	\left\| L_\varphi^{-1} \left( L_\psi - L_\varphi \right) L_\psi^{-1} \right\|_{\mathscr{L}(V^*\!, \, V)}
	\\
	&=
	\left\| L_\varphi^{-1} \left( A_{\psi(t) \,\circ\, \varphi_{t_0}} - A_{\varphi(t) \,\circ\, \varphi_{t_0}} \right) L_\psi^{-1} \right\|_{\mathscr{L}(V^*\!, \, V)}
	\\
	&\leq
	\|L_\varphi^{-1}\|_{\mathscr{L}(V^*\!, \, V)} \ 
	\|A_{\psi(t) \,\circ\, \varphi_{t_0}} - A_{\varphi(t) \,\circ\, \varphi_{t_0}}\|_{\mathscr{L}(V, \, V^*)} \ 
	\|L_\psi^{-1}\|_{\mathscr{L}(V^*\!, \, V)} \ 
	\\
	&\leq
	\frac{1}{\omega^2} \, \|A_{\psi(t) \,\circ\, \varphi_{t_0}} - A_{\varphi(t) \,\circ\, \varphi_{t_0}}\|_{\mathscr{L}(V, \, V^*)} \\
	&\leq
	\frac{\ell_A}{\omega^2} \, \|\varphi(t) \circ \varphi_{t_0} - \psi(t) \circ \varphi_{t_0}\|_{\reg, \infty}
\end{align*}
where the last inequality follows from the Lipschitz assumption on the operator $A$.

\end{enumerate}
\end{proof}

Using the estimates of the previous section, we can in addition show the following Lispchitz property of $\jerk_{\varphi, \, \varphi_{t_0}}$.
\begin{lemma}
\label{lemma:j_lipschitz}
For all $t \in [t_0, t_0+\eta]$, 
\[
	\|\jerk_{\varphi, \, \varphi_{t_0}}(t) - \jerk_{\psi, \, \varphi_{t_0}}(t)\|_{V^*}
	\leq
	\genC[\varphi_{t_0}] \sup_{s \,\in\, [t_0, t_0+\eta]} \|\varphi(s) - \psi(s)\|_{2, \infty}
	=
	\genC[r,\varphi_{t_0}] \, \|\varphi - \psi\|_\infty .
\]
\end{lemma}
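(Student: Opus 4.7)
The plan is to fix an arbitrary $v'\in V$, bound $\big|(\jerk_{\varphi,\varphi_{t_0}}(t)-\jerk_{\psi,\varphi_{t_0}}(t)\mid v')\big|$ by $\genC[r,\varphi_{t_0}]\,\|v'\|_V\,\|\varphi-\psi\|_\infty$, and take the supremum over $\|v'\|_V\leq 1$ to obtain the announced $V^*$-estimate. The first step is to rewrite both defining integrals, which live on the moving domains $\varphi(t)(M_{t_0})$ and $\psi(t)(M_{t_0})$, as integrals on the common reference domain $M_{t_0}$ via the changes of variables $x=\varphi(t,y)$ and $x=\psi(t,y)$ with $y\in M_{t_0}$. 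Using that $\lagdensity_{\varphi,\varphi_{t_0}}(t)\circ\varphi(t)^{-1}\circ\varphi(t)=\lagdensity_{\varphi,\varphi_{t_0}}(t)$, the two integrands become, respectively, $\chi(\varphi(t,y))\,\forcefunction(\lagdensity_{\varphi,\varphi_{t_0}}(t,y))\,(-\mathrm{div}\,v')(\varphi(t,y))\,\jac\varphi(t,y)$ and the analogous expression with $\psi$.

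Next, I would expand the difference of the two integrands by a standard telescoping identity into four summands, each containing exactly one differenced factor: (i) $\chi\circ\varphi-\chi\circ\psi$, (ii) $\forcefunction(\lagdensity_{\varphi,\varphi_{t_0}})-\forcefunction(\lagdensity_{\psi,\varphi_{t_0}})$, (iii) $(\mathrm{div}\,v')\circ\varphi-(\mathrm{div}\,v')\circ\psi$, and (iv) $\jac\varphi-\jac\psi$. For $\varphi,\psi\in B_r$, the three undifferenced factors in each of these four summands are controlled uniformly in $L^\infty(M_{t_0})$: $\jac\varphi$, $\jac\psi$, $D\varphi$, $D\psi$ are bounded by $\genC[r]$; $\chi$ and $\forcefunction$ are bounded by assumption; and $\|\mathrm{div}\,v'\|_\infty\leq c_V\|v'\|_V$ thanks to the embedding $V\hookrightarrow C_0^{\reg+1}$.

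For the four differenced factors, (i) Lipschitzness of $\chi$ together with the bound on $D\varphi,D\psi$ on $B_r$ gives $\|\chi\circ\varphi-\chi\circ\psi\|_\infty\leq\genC[r]\|\varphi(t)-\psi(t)\|_{0,\infty}$; (iii) the inclusion $V\hookrightarrow C_0^{\reg+1}\subset C_0^2$ (which requires $\reg\geq 2$) yields $\|\mathrm{div}\,v'\|_{1,\infty}\leq c_V\|v'\|_V$ and hence $\|(\mathrm{div}\,v')\circ\varphi-(\mathrm{div}\,v')\circ\psi\|_\infty\leq c_V\|v'\|_V\|\varphi(t)-\psi(t)\|_{0,\infty}$; (iv) local Lipschitzness of $\det$ on bounded subsets of $\gl_\dm(\R)$ gives $\|\jac\varphi-\jac\psi\|_\infty\leq\genC[r]\|\varphi(t)-\psi(t)\|_{1,\infty}$; and (ii), Lipschitzness of $\forcefunction$ combined with Cauchy--Schwarz on $M_{t_0}$ and the uniform $L^\infty$ bounds on the other three factors (among which the $c_V\|v'\|_V$ from $\mathrm{div}\,v'$) gives a bound of $\genC[r]\,\|v'\|_V\,\|\lagdensity_{\varphi,\varphi_{t_0}}(t)-\lagdensity_{\psi,\varphi_{t_0}}(t)\|_{L^2(M_{t_0})}$, which by Lemma \ref{lemma:tau_lipschitz} is $\leq\genC[r,\varphi_{t_0}]\,\|v'\|_V\,\|\varphi-\psi\|_\infty$. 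Summing the four contributions closes the argument.

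The main obstacle is item (ii): the control on $\lagdensity_{\varphi,\varphi_{t_0}}-\lagdensity_{\psi,\varphi_{t_0}}$ supplied by Lemma \ref{lemma:tau_lipschitz} is only in $L^2(M_{t_0})$, whereas the other factors are known pointwise. The key point is that $\mathrm{div}\,v'$ must be controlled in $L^\infty$ (not merely in $L^2$) so that after Cauchy--Schwarz one still obtains a bound linear in $\|v'\|_V$; the hypothesis $V\hookrightarrow C_0^{\reg+1}$ with $\reg\geq 2$ is used precisely here.
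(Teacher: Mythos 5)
Your proposal is correct and follows essentially the same route as the paper: pull both integrals back to $M_{t_0}$ by a change of variables, telescope the difference of the integrands into four terms (one differenced factor each), bound the undifferenced factors uniformly on $B_r$, and handle the $\forcefunction(\lagdensity_{\varphi,\varphi_{t_0}})-\forcefunction(\lagdensity_{\psi,\varphi_{t_0}})$ term via Lipschitz continuity of $\forcefunction$, Cauchy--Schwarz, and Lemma~\ref{lemma:tau_lipschitz}. Your closing remark correctly identifies where the embedding $V\hookrightarrow C_0^{\reg+1}$ with $\reg\geq 2$ is needed, which matches the paper's use of $\|v'\|_{2,\infty}\leq c_V\|v'\|_V$.
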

\begin{proof}
From the definition of $\jerk$, we make a change of variables to obtain
\begin{align*}
	&\hspace{13pt}
	\left| \left( \jerk_{\varphi, \, \varphi_{t_0}}(t) - \jerk_{\psi, \, \varphi_{t_0}}(t) \mid v' \right) \right|
	\\[8pt]
	&=
	\Big|
		\int_{\varphi(t)(M_{t_0})} \chi \ \forcefunction(\lagdensity_{\varphi, \, \varphi_{t_0}}(t) \circ \varphi^{-1}(t)) \, (-\mathrm{div} \, v') \, dx
	\\
	&\hspace{20pt}
		\phantom{}
		-
		\int_{\psi(t)(M_{t_0})} \chi \ \forcefunction(\lagdensity_{\psi, \, \varphi_{t_0}}(t) \circ \psi^{-1}(t)) \, (-\mathrm{div} \, v') \, dx
	\Big|
	\\[8pt]
	&\leq
	\int_{M_{t_0}}
	\Big|
		\chi \circ \varphi(t) \ \, \forcefunction(\lagdensity_{\varphi, \, \varphi_{t_0}}(t)) \ \, (-\mathrm{div} \, v' \circ \varphi(t)) \ \, \jac \varphi(t)
	\\[-5pt]
	&\hspace{60pt}
		\phantom{}
		-
		\chi \circ \psi(t) \ \, \forcefunction(\lagdensity_{\psi, \, \varphi_{t_0}}(t)) \ \, (-\mathrm{div} \, v' \circ \psi(t)) \ \, \jac \psi(t)
	\Big|
	dx
	\\[8pt]
	&\leq
	\|\nabla \chi\|_\infty \ \|\varphi(t) - \psi(t)\|_\infty \ \|\forcefunction\|_\infty \ \|v'\|_{1, \infty} \ \|\jac\varphi(t)\|_\infty \ \mathrm{vol}(M_{t_0})
	\\[3pt]
	&\hspace{15pt}
	\phantom{}
	+
	\genC[r,\varphi_{t_0}] \sup_{s \,\in\, [t_0, t_0+\eta]} \|\varphi(s) - \psi(s)\|_{2, \infty} \ \|v'\|_{1, \infty} \ \|\jac\varphi(t)\|_\infty \ \mathrm{vol}(M_{t_0})
	\\[-2pt]
	&\hspace{15pt}
	\phantom{}
	+
	\|\forcefunction\|_\infty \ \|v'\|_{2, \infty} \ \|\varphi(t) - \psi(t)\|_\infty \ \|\jac\varphi(t)\|_\infty \ \mathrm{vol}(M_{t_0})
	\\[3pt]
	&\hspace{15pt}
	\phantom{}
	+
	\genC[r] \ \|\forcefunction\|_\infty \ \|v'\|_{1, \infty} \  \|\varphi(t) - \psi(t)\|_\infty \ \mathrm{vol}(M_{t_0})
	\\[8pt]
	&\leq
	\genC[r,\varphi_{t_0}] \, \|\varphi - \psi\|_\infty \ \|v'\|_{V} ,
\end{align*}
where we have estimated term by term and used Lipschitz continuity of $\forcefunction$ together with Lemma~\ref{lemma:tau_lipschitz}.
\end{proof}

We can now go back to the definition of the mapping $\varGamma_{\eta}$. 
\begin{lemma}
\label{lemma:j_uniformly_bounded}
For all $\varphi \in C([t_0, t_0+\eta], B_r)$, $\varphi_{t_0} \in \D^\reg(\mathbb{R}^\dm)$ and $\priodensity_{t_0} \in L^2(\varphi_{t_0}(M_0))$, the Bochner integral in \eqref{eq:def_Gamma} is uniformly bounded for $t\in [t_0, t_0+\eta]$.
\end{lemma}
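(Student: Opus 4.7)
The plan is to leverage the uniform bounds on $v_{\varphi,\varphi_{t_0}}$ already established in Proposition \ref{prop:bound_v_j}, combined with the composition estimate in Proposition \ref{prop:bounds_v_phi}(i), to directly control the Bochner integrand in the $\C_0^\reg$-norm uniformly in $s$, $t$, and $\varphi \in C([t_0,t_0+\eta], B_r)$.

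First, I would invoke Proposition \ref{prop:bound_v_j}(ii), which gives the pointwise-in-time estimate
$$\|v_{\varphi, \, \varphi_{t_0}}(s)\|_{\reg+1,\infty} \leq \frac{c_V}{\omega}\,J$$
for all $s \in [t_0, t_0+\eta]$, with $J = c_V\,\|\forcefunction\|_\infty\,\|\chi\|_{L^1}$ depending only on the fixed data of the problem. In particular this bound is independent of $\varphi \in C([t_0,t_0+\eta], B_r)$, of $s$, and even of the initial condition $\priodensity_{t_0}$, which is the crucial point for uniformity.

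Next, since $\varphi(s) \in B_r$ for all $s$, I would apply Proposition \ref{prop:bounds_v_phi}(i) to conclude
$$\|v_{\varphi, \, \varphi_{t_0}}(s) \circ \varphi(s)\|_{\reg,\infty} \leq \genC[r]\,\|v_{\varphi, \, \varphi_{t_0}}(s)\|_{\reg,\infty} \leq \genC[r]\,\frac{c_V}{\omega}\,J \colonequals K_r,$$
where $K_r$ depends only on $r$ and the fixed data. In particular $s \mapsto v_{\varphi, \, \varphi_{t_0}}(s) \circ \varphi(s)$ is a strongly measurable map into the separable Banach space $\C_0^\reg(\R^\dm,\R^\dm)$ with essentially bounded norm, and hence Bochner integrable on any subinterval of $[t_0,t_0+\eta]$.

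Finally, using the standard estimate for Bochner integrals,
$$\left\|\int_{t_0}^t v_{\varphi,\varphi_{t_0}}(s)\circ\varphi(s)\,ds\right\|_{\reg,\infty} \leq \int_{t_0}^t \|v_{\varphi,\varphi_{t_0}}(s)\circ\varphi(s)\|_{\reg,\infty}\,ds \leq (t-t_0)\,K_r \leq \eta\,K_r,$$
for all $t \in [t_0,t_0+\eta]$, which provides the desired uniform bound. There is no real obstacle here: the lemma is a direct corollary of the uniform $\reg{+}1$ bound on $v_{\varphi,\varphi_{t_0}}$ (itself coming from the coercivity of $\omega\,L_V$ used in Proposition \ref{prop:bound_v_j}(ii)) combined with the composition estimate. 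The only subtlety to flag explicitly is that the bound $K_r\,\eta$ depends on $r$ and $\eta$ but not on the particular choice of $\varphi \in C([t_0,t_0+\eta], B_r)$, which is precisely what is needed for $\varGamma_\eta$ to be well-defined as a map into $C([t_0,t_0+\eta], \id + \C_0^\reg(\R^\dm,\R^\dm))$ and, once $\eta$ is chosen small enough, into $S_{\eta,\varphi_{t_0}}$ itself.
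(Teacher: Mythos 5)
Your proposal is correct and follows essentially the same route as the paper: the uniform bound $\|v_{\varphi,\varphi_{t_0}}(s)\|_{\reg+1,\infty}\leq c_V J/\omega$ from Proposition \ref{prop:bound_v_j}(ii), the composition estimate of Proposition \ref{prop:bounds_v_phi}(i), and integration over $[t_0,t]$. The extra remark on strong measurability and the explicit observation that the bound is independent of $\varphi\in C([t_0,t_0+\eta],B_r)$ and of $\priodensity_{t_0}$ are welcome clarifications but do not change the argument.
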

\begin{proof}
Using Proposition \ref{prop:bounds_v_phi}, we find that for all $s\in [t_0, t_0+\eta]$:
\begin{align*}
\|v_{\varphi, \, \varphi_{t_0}}(s)\|_{\reg, \infty} \leq  \|v_{\varphi, \, \varphi_{t_0}}(s)\|_{\reg+1, \infty} \leq \frac{c_V J}{\omega}
\end{align*}
which gives for all $t \in [t_0, t_0+\eta]$:
\begin{align*}
	\int_{t_0}^t \| v_{\varphi, \, \varphi_{t_0}}(s) \circ \varphi(s) \|_{\reg, \infty} \, ds
	\leq
	\int_{t_0}^t \genC[r] \, \|v_{\varphi, \, \varphi_{t_0}}(s)\|_{\reg, \infty} \leq
	\frac{\genC[r] \, c_V}{\omega} \, J \, \eta < \infty.
\end{align*}
where the first inequality follows from Proposition \ref{prop:bound_v_j} (i).
\end{proof}

Note that in addition, if $\eta$ is taken small enough such that $\frac{\genC[r] \, c_V}{\omega} \, J \, \eta \leq r$ then $\varGamma_{\eta}$ maps $S_{\eta, \, \varphi_{t_0}}$ to itself. The goal is now to show that $\varGamma_{\eta}$ is a contractive mapping on $S_{\eta, \, \varphi_{t_0}}$. Indeed, for any $\varphi,\psi \in S_{\eta, \, \varphi_{t_0}}$:
\begin{align*}
	&\hspace{14pt}
	\|\varGamma_{\eta}(\varphi) - \varGamma_{\eta}(\psi)\|_\infty =
	\sup_{t \,\in\, [t_0, t_0+\eta]} \|\varGamma_{\eta}(\varphi)(t) - \varGamma_{\eta}(\psi)(t)\|_{\reg, \infty}
	\\
	&\leq
	\sup_{t \,\in\, [t_0, t_0+\eta]} \int_{t_0}^t \| v_{\varphi, \, \varphi_{t_0}}(s) \circ \varphi(s) - v_{\psi, \, \varphi_{t_0}}(s) \circ \psi(s) \|_{\reg, \infty} \, ds
	\\[5pt]
	&\leq
	\int_{t_0}^{t_0 + \eta}
	\left(
		\| v_{\varphi, \, \varphi_{t_0}}(s) \circ \varphi(s) - v_{\varphi, \, \varphi_{t_0}}(s) \circ \psi(s) \|_{\reg, \infty}
		+
		\| v_{\varphi, \, \varphi_{t_0}}(s) \circ \psi(s) - v_{\psi, \, \varphi_{t_0}}(s) \circ \psi(s) \|_{\reg, \infty}
	\right)
	ds
\end{align*}	
Using Proposition \ref{prop:bounds_v_phi} (ii) and (iii), we get:
\begin{align*}
	&\hspace{14pt}
	\|\varGamma_{\eta}(\varphi) - \varGamma_{\eta}(\psi)\|_\infty 
	\\
	&\leq
	\genC[r]
	\int_{t_0}^{t_0 + \eta}
	\left(
		\| v_{\varphi, \, \varphi_{t_0}}(s) \|_{\reg+1, \infty} \, \|\varphi(s) - \psi(s)\|_{\reg, \infty}
		+
		\| v_{\varphi, \, \varphi_{t_0}}(s) - v_{\psi, \, \varphi_{t_0}}(s) \|_{\reg, \infty}
	\right)
	ds
	\\[5pt]
	&\leq
	\genC[r]\int_{t_0}^{t_0 + \eta}
	\left(
		\vphantom{\frac{J}{\omega}}
		\frac{c_V}{\omega} \, J \, \|\varphi(s) - \psi(s)\|_{\reg, \infty}
	\right.
	\\
	&\hspace{60pt}
	\left.
		\phantom{}
		+
		c_V
		\left(
			\frac{J}{\omega^2} \, \ell_A \, \|\varphi(s) \circ \varphi_{t_0} - \psi(s) \circ \varphi_{t_0}\|_{\reg, \infty}
			+
			\frac{1}{\omega} \, \|j_{\varphi, \, \varphi_{t_0}}(s) - j_{\psi, \, \varphi_{t_0}}(s)\|_{V^*}
		\right)
	\right)
	ds
\end{align*}
Now using Lemma~\ref{lemma:j_lipschitz} above, we obtain the following inequalities: 
\begin{align}
\label{eq:proof_theorem_lipj}
&\|\varGamma_{\eta}(\varphi) - \varGamma_{\eta}(\psi)\|_\infty \nonumber\\
&\leq
\genC[r]
	\int_{t_0}^{t_0 + \eta}
	\left(
		\left( \frac{c_V J}{\omega} + \frac{c_V J}{\omega^2} \, \ell_A \, \genC[\varphi_{t_0}]\right) \|\varphi(s) - \psi(s)\|_{\reg, \infty}
		+
		\frac{c_V}{\omega} \, \genC[r,\varphi_{t_0}] \, \sup_{s \,\in\, [t_0, t_0+\eta]} \|\varphi(s) - \psi(s)\|_{\reg, \infty}
	\right)
	ds\\
	&\leq
	\genC[r,\varphi_{t_0}] \, \eta \, \|\varphi - \psi\|_\infty \nonumber,
\end{align}
It follows that there exists a small enough $\eta > 0$ depending on $\varphi_{t_0}$ and $r$ such that $\varGamma_{\eta}$ is a well-defined contraction on $S_{\eta, \, \varphi_{t_0}}$ and, by Banach fixed point theorem, we get the local existence and uniqueness of a solution on $[t_0, t_0+\eta]$.

By concatenating local solutions, we can construct a unique maximal solution $\varphi$ defined on a maximal interval $I_{\max}$, and either $I_{\max} = [0, T')$ for some $T' < T$ or $I_{\max} = [0, T]$. To show that the solution is defined over the entire interval $[0, T]$, we first prove that $\|\varphi(t) - \mathit{id}\|_{\reg, \infty}$ is bounded on $I_{\max}$. For all $t \in I_{\max}$, a solution $\varphi$ satisfies
\begin{equation*}
	\varphi(t, x) = x + \int_0^t v_{\varphi}(s, \varphi(s, x)) \, ds ,
\end{equation*}
 which gives for all $x \in \R^\dm$:
\begin{equation}
	\label{eq:solution_bound_0}
	|\varphi(t, x) - x|
	\leq
	\int_0^t \|v_{\varphi}(s)\|_\infty \, ds
	\leq
	\int_0^t \frac{c_V}{\omega} \, \|j_\varphi(s)\|_{V^*} \, ds
	\leq
	\frac{c_V}{\omega} \, J T .
\end{equation} 
Moreover, we have: 
\begin{equation*}
	D\varphi(t, x) = {\boldsymbol I}_\dm + \int_0^t Dv_{\varphi}(s, \varphi(s, x)) \, D\varphi(s, x) \, ds ,
\end{equation*}
where $\boldsymbol{I}_\dm$ denotes the $\dim$-by-$\dim$ identity matrix which leads to
\begin{align*}
	|D\varphi(t, x) - \boldsymbol{I}_\dm|
	&=
	\left|
		\int_0^t
		\left(
			\vphantom{\sum}
			Dv_{\varphi}(s, \varphi(s, x))
			+
			Dv_{\varphi}(s, \varphi(s, x)) (D\varphi(s, x) - \boldsymbol{I}_\dm)
		\right)
		ds
	\right|
	\\
	&\leq
	\frac{c_V}{\omega} \, JT
	+
	\int_0^t \, \frac{c_V}{\omega} \, J \ |D\varphi(s, x) - \boldsymbol{I}_\dm| \, ds.
\end{align*}
By Gr\"{o}nwall's inequality
\begin{equation}
	\label{eq:solution_bound_1}
	|D\varphi(t, x) - \boldsymbol{I}_\dm|
	\leq
	\frac{c_V}{\omega} \, JT
	\exp\!\left( \frac{c_V}{\omega} \, JT \right).
\end{equation}
For the second order derivatives, we see that:
\begin{align*}
	\begin{split}
	|D^2\varphi(t, x)|
	&\leq
	\int_0^t
	\left(
		\vphantom{\sum}
		|D^2 v_{\varphi}(s, \varphi(s, x))| \, |D\varphi(s, x)|^2
	\right.
	\\
	&\hspace{30pt}
	\left.
		\vphantom{\sum}
		\phantom{}
		+ 
		|Dv_{\varphi}(s, \varphi(s, x))| \, |D^2\varphi(s, x)|
	\right)
	ds ,
	\end{split}
\end{align*}
and inserting the bound \eqref{eq:solution_bound_1} into the above, we obtain
\[
	|D^2 \varphi(t, x)|
	\leq
	\left( 1 + B_J \right)^2
	\left( \frac{c_V}{\omega} \, JT \right) \,
	+
	\int_0^t \frac{c_V}{\omega} \, J \ |D^2\varphi(s, x)| \, ds .
\]
Using again Gr\"{o}nwall's inequality, we get that $|D^2 \varphi(t, x)|$ is bounded by a constant dependent on $J$ uniformly in $t$ and $x$. Then by a simple recursive argument, we show similarly that there exists a constant $B_J$ such that for any $2\leq k \leq \reg$: 
\begin{equation}
  \label{eq:solution_bound_2}
  |D^k \varphi(t, x)| \leq B_J, \ \forall t \in [0,T'), \ \forall x\in \R^\dm.
\end{equation}
Finally, with \eqref{eq:solution_bound_0}, \eqref{eq:solution_bound_1}, and \eqref{eq:solution_bound_2}, we conclude that
\[
	\|\varphi(t) - \mathit{id}\|_{\reg, \infty} \leq \genC_J .
\]
The same inequality holds for $\varphi^{-1}(t)$ for $T\in [0, T')$. Indeed, from standard results on flows (c.f. for instance \cite{younes2019shapes} Chap.\ 7), one has that for $t\in [0,T')$, the inverse map $\psi(t) \colonequals \varphi(t)^{-1}$ is obtained as the flow of the ODE $dz/ds = \tilde{v}^{(t)}(s,z)$, with $\tilde{v}^{(t)}(s) = -v_{\varphi(t-s)}$, and one can repeat the analysis above with $\tilde v$ in place of $v$. Importantly, this tells us that we can choose $r = \genC[J]$ independently of $T'$.

Now we can show that $\varphi(t)$ has a limit in $\D^\reg(\mathbb{R}^\dm)$ as $t\uparrow T'$ by the Cauchy criterion. Let $(t_k)_{k = 1}^\infty \subset I_{\max}$ be a sequence such that $t_k \uparrow T'$. For $k < l$, we have
\begin{align*}
	\|\varphi(t_k) - \varphi(t_l)\|_{\reg, \infty}
	&\leq
	\int_{t_k}^{t_l} \|v_{\varphi}(s) \circ \varphi(s)\|_{\reg, \infty} \, ds
	\\
	&\leq
	\int_{t_k}^{t_l} \genC_J \ c_V \, \|v_\varphi(s)\|_{V} \, ds
	\\
	&\leq
	\int_{t_k}^{t_l} \genC_J \ \frac{c_V}{\omega} \, \|j_\varphi(s)\|_{V^*} \, ds
	\\
	&\leq
	\frac{\genC_J \, c_V}{\omega} \, J \ (t_k - t_l) ,
\end{align*}
which shows that $(\varphi(t_n))_{n = 1}^\infty$ is a Cauchy sequence in $\D^\reg(\R^\dm)$ for $\|\cdot\|_{\reg,\infty}$. It follows that $t \mapsto \varphi(t)$ has a limit $\varphi(T')$ as $t\uparrow T'$ in the complete space $\id+\C_0^\reg(\R^\dm,\R^\dm)$. Similarly, replacing $v$ by $\tilde v$, we find that $\varphi(t)^{-1}$ also has a limit at $T'$, which is necessarily $\varphi(T')^{-1}$.  This shows that $\varphi(T')\in B_r$, so that the solution can be continued at $t = T'$, which contradicts that $[0, T')$ is the maximal interval of existence.

From the above analysis, we also obtain that $t \mapsto \varphi(t)$ is bounded on $[0,T]$ for $d_{\reg,\infty}$ and  therefore we have $\sup_{t \,\in\, [0, \, T]} \|F_{\varphi(t)}^{-1}\|_\infty < \infty$. Thus Corollary \ref{cor:weak_solution_PDE_global} applies and it follows that we get a weak solution $\lagdensity$ to the reaction-diffusion PDE that is also well-defined on $[0,T]$, which concludes the proof of Theorem \ref{thm:main}.  

\section{Discussion}
We introduced a new general longitudinal model to describe the shape of a material deforming through the action of an internal growth potential which itself evolves according to an advection-reaction-diffusion process. This model extends our previous work in \cite{hsieh2020mechanistic}, which did not include any dynamics on the growth potential beyond pure advection. The present paper was mainly dedicated to proving the long time existence of solutions to the resulting system of coupled PDEs on moving domains. In contrast with other related reaction-diffusion systems on moving domains which often only yield short-time existence, the global existence is here made possible in part thanks to the use of a particular regularization energy on the deformation.  

\begin{figure}[hbt!]
	\centering
	\begin{subfigure}[t]{0.4\textwidth}
		\centering
		\includegraphics[height = 90pt]{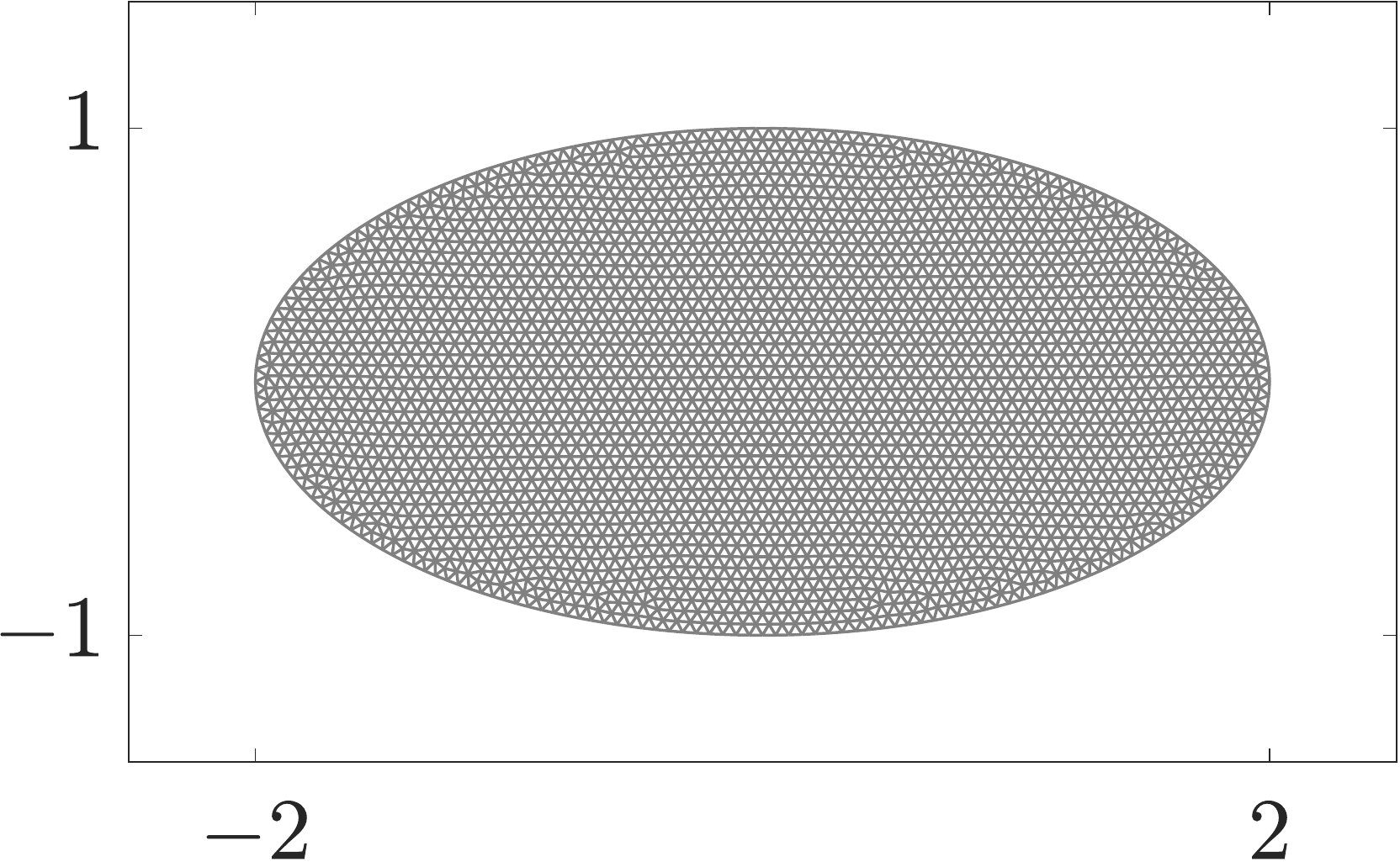}
		\caption{Mesh of the initial shape.}
	\end{subfigure}
	\quad
	\begin{subfigure}[t]{0.4\textwidth}
		\centering
		\includegraphics[trim = 25 50 45 70, clip, height = 91pt]{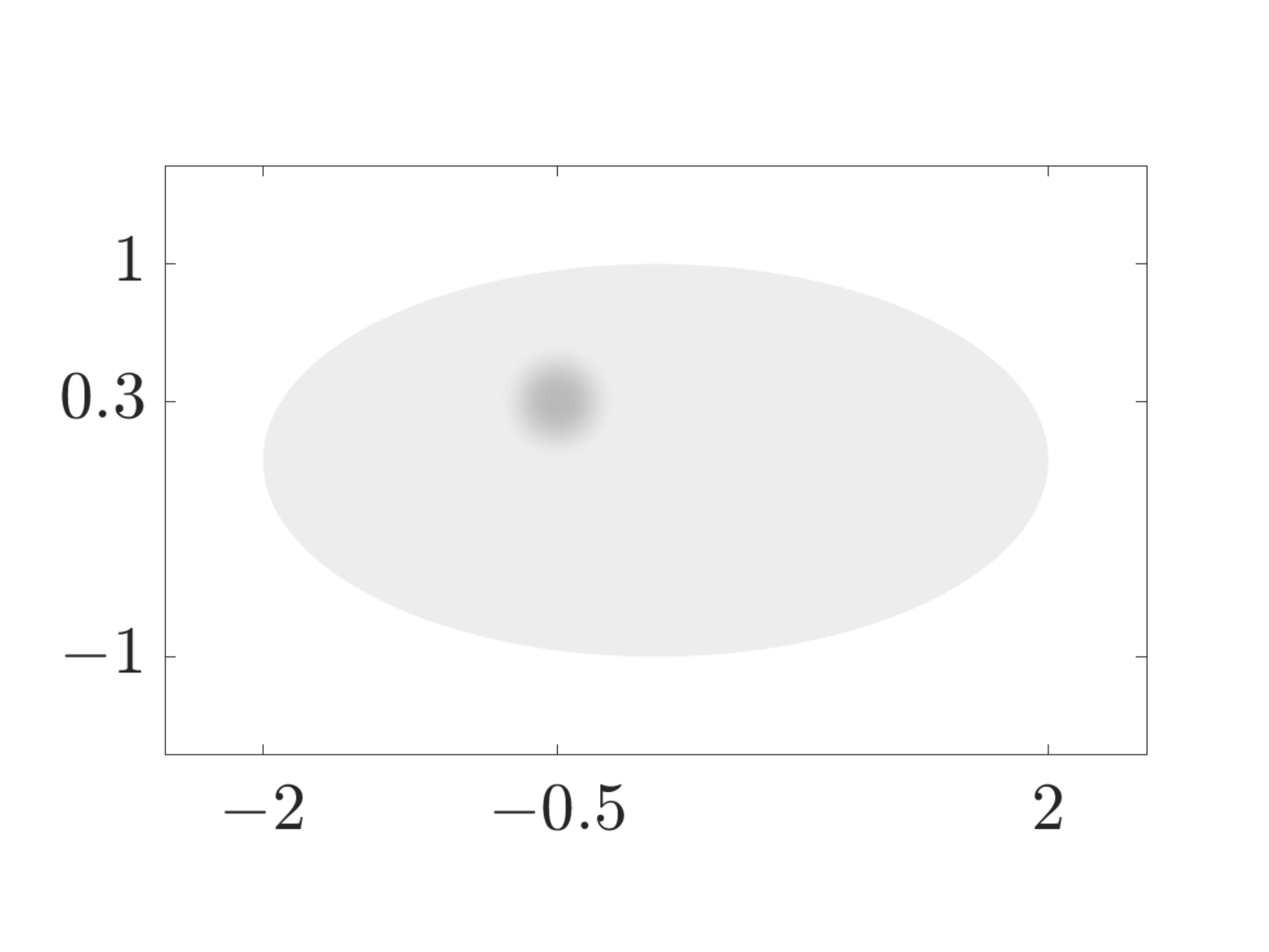}
		\caption{Initial potential centered at $(-0.5, 0.3)$.}
	\end{subfigure}
	\caption{Synthetic initial shape and growth potential used in the numerical simulations.} 
	\label{fig:initial_mesh_potential}
\end{figure}

Although this paper focuses on mathematical aspects, simple numerical simulations of the evolution equations given by \eqref{eq:PDELDDMM} and \eqref{eq:PDELDDMM_bd_conditions} can further illustrate the potential interest of this model in future applications to the study of growth or atrophy of biological tissues, which was the original motivation behind our work. We present a few such preliminary simulations using the simple synthetic 2D domain shown in Figure \ref{fig:initial_mesh_potential} (a) as initial shape $M_0$. We choose the tensor $A_\varphi$ to be the isotropic elastic tensor given by \eqref{eq:elastic_tensor_isotropic} with Lam{\'e} parameters $\lambda = 0$ and $\mu = 1$ on $\varphi(M_0)$ as described earlier in Section \ref{ssec:control_systems}. The initial potential $\priodensity_0$ is a shifted radial function compactly supported in a ball centered at $c_{\mathrm{true}} = (-0.5, 0.3)$ as shown in Figure \ref{fig:initial_mesh_potential} (b). Specifically, it takes the form
\begin{equation}
\label{eq:initial_growth_pot_example}
	\priodensity_0(x; c, r, h) = h \left( \frac{|x - c|^2}{r^2} - 1 \right)^2 \mathbbm{1}_{B(c, r)}(x) .
\end{equation}
with $c\in \Mzero$, $r>0$ and $h>0$ being the center, radius and height of the potential function respectively. We also adopt simple reaction-diffusion and yank models for the purpose of illustration. For the reaction-diffusion model, we let the diffusion tensor be a constant $S_\varphi(t, x) = \mathrm{diag}(0.025, 0.005)$, which diffuses five times faster along the $x$-direction than along the $y$-direction. The reaction and yank functions $\reaction$ and $\forcefunction$ are both $C^2$ piecewise polynomial supported on $[p_{\min}, p_{\max}] = [0.01, 1]$. Their plots are displayed in Figure \ref{fig:R_Q_functions}.

\begin{figure}[hbt!]
	\centering
	\includegraphics[width = 0.35\textwidth]{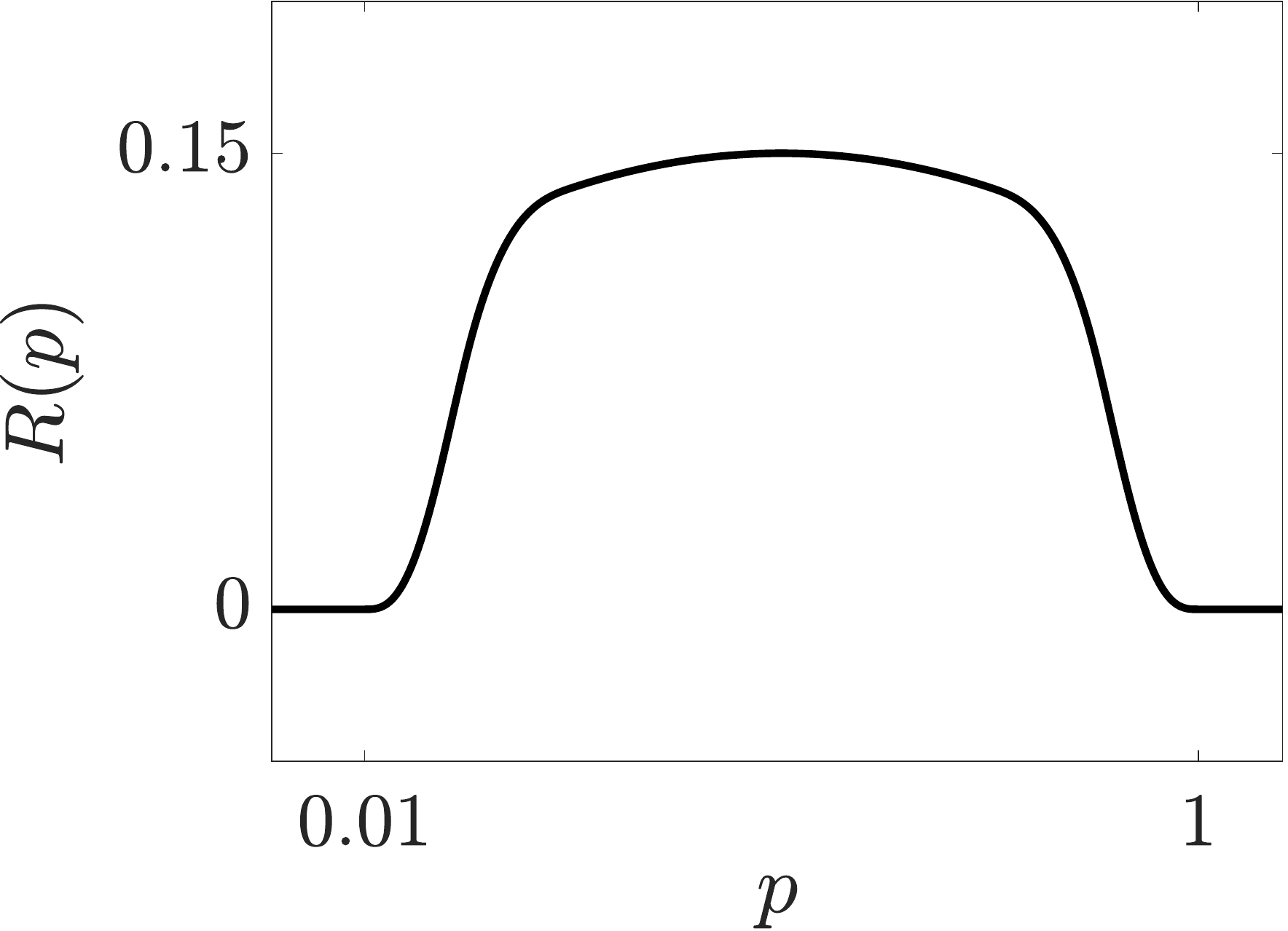}
	\quad\quad
	\includegraphics[width = 0.35\textwidth]{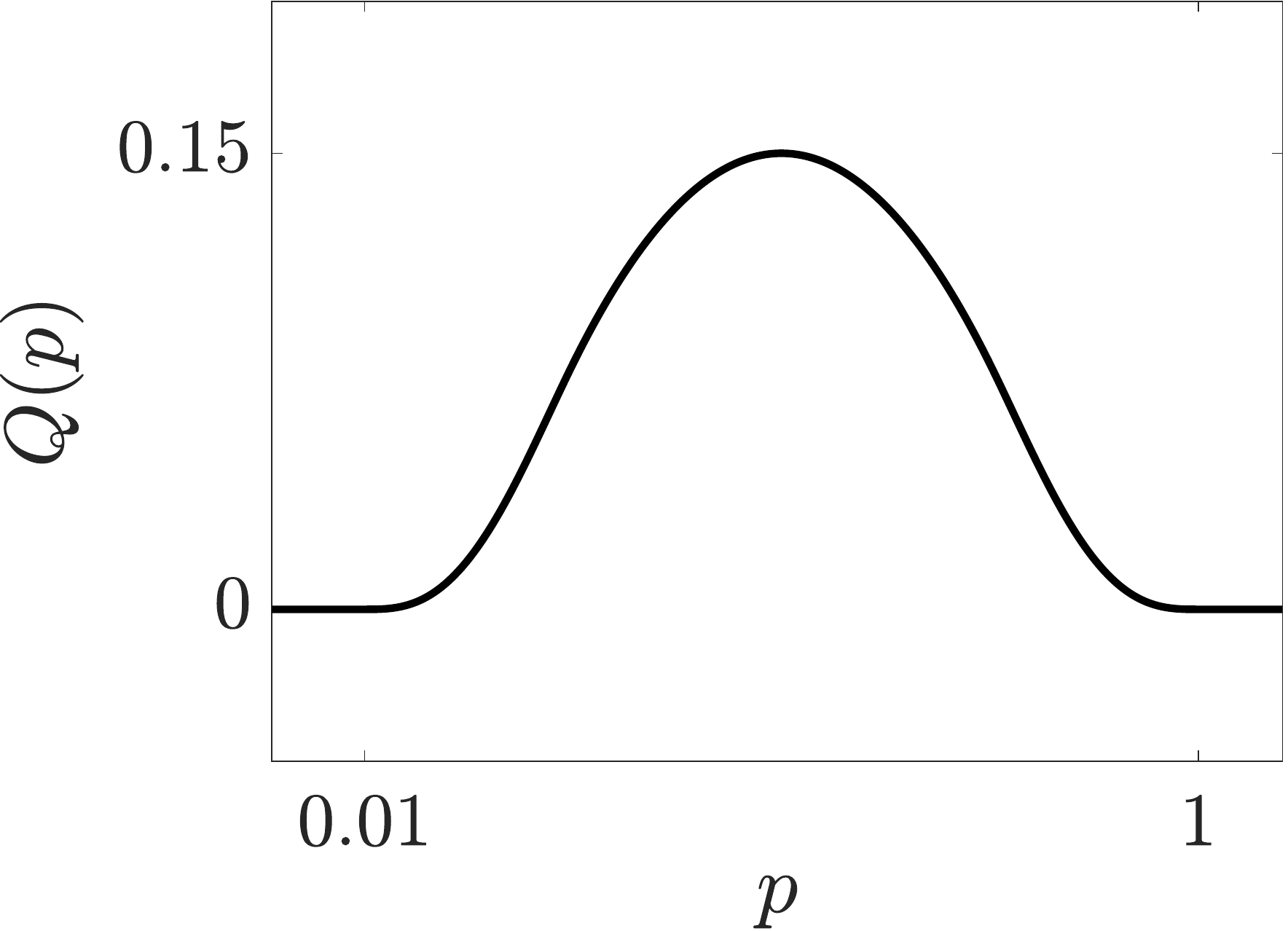}
	\caption{Plots of the functions $R$ and $Q$ used for the reaction and yank expressions in the simulations.} \label{fig:R_Q_functions}
\end{figure}

With the above selection of parameters and initial conditions, the evolution of the growth potential and the resulting deformation of the domain's shape are shown in Figure \ref{fig:expl}. We note that the potential eventually becomes constant over the whole domain after which the deformation stops. One of our main future subject of investigation will be to tackle the inverse problem associated to this longitudinal model, generalizing the work done in \cite{hsieh2020mechanistic}. In other words, if we observe the initial and final (plus possibly some intermediate) domain's shapes and if a parametric representation of the initial growth potential as e.g. \eqref{eq:initial_growth_pot_example} is given, is it possible to recover this initial potential, in particular its location? This issue relates to a long-term goal, in medical imaging, to infer the early impact of neuro-degenerative diseases based on later observations, allowing for a better understanding of their pathogenesis.

\begin{figure}[hbt!]
	\centering
	\begin{subfigure}{\textwidth}
		\centering
		\includegraphics[height = 75pt]{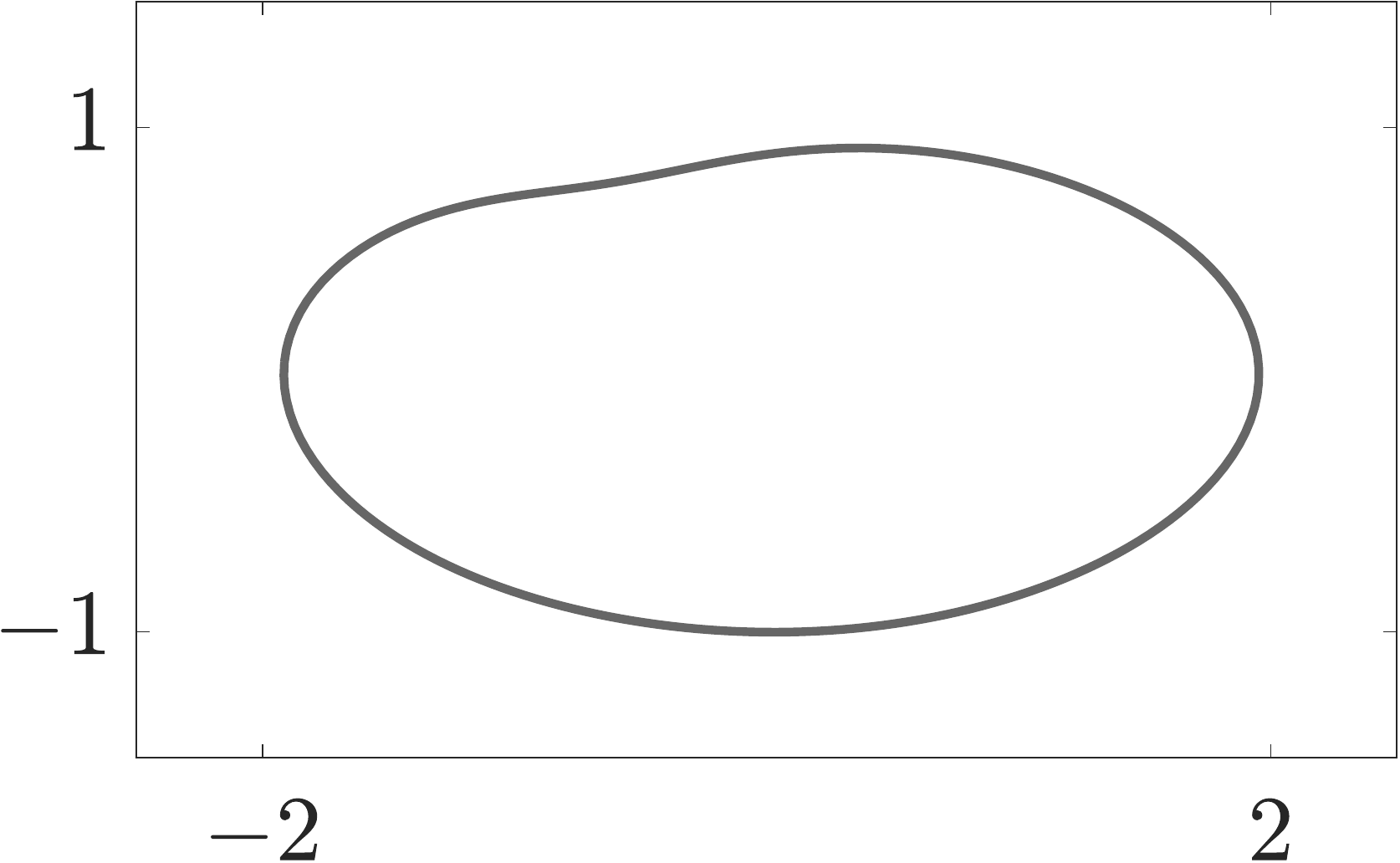}
		\raisebox{-20pt}{\includegraphics[height = 120pt]{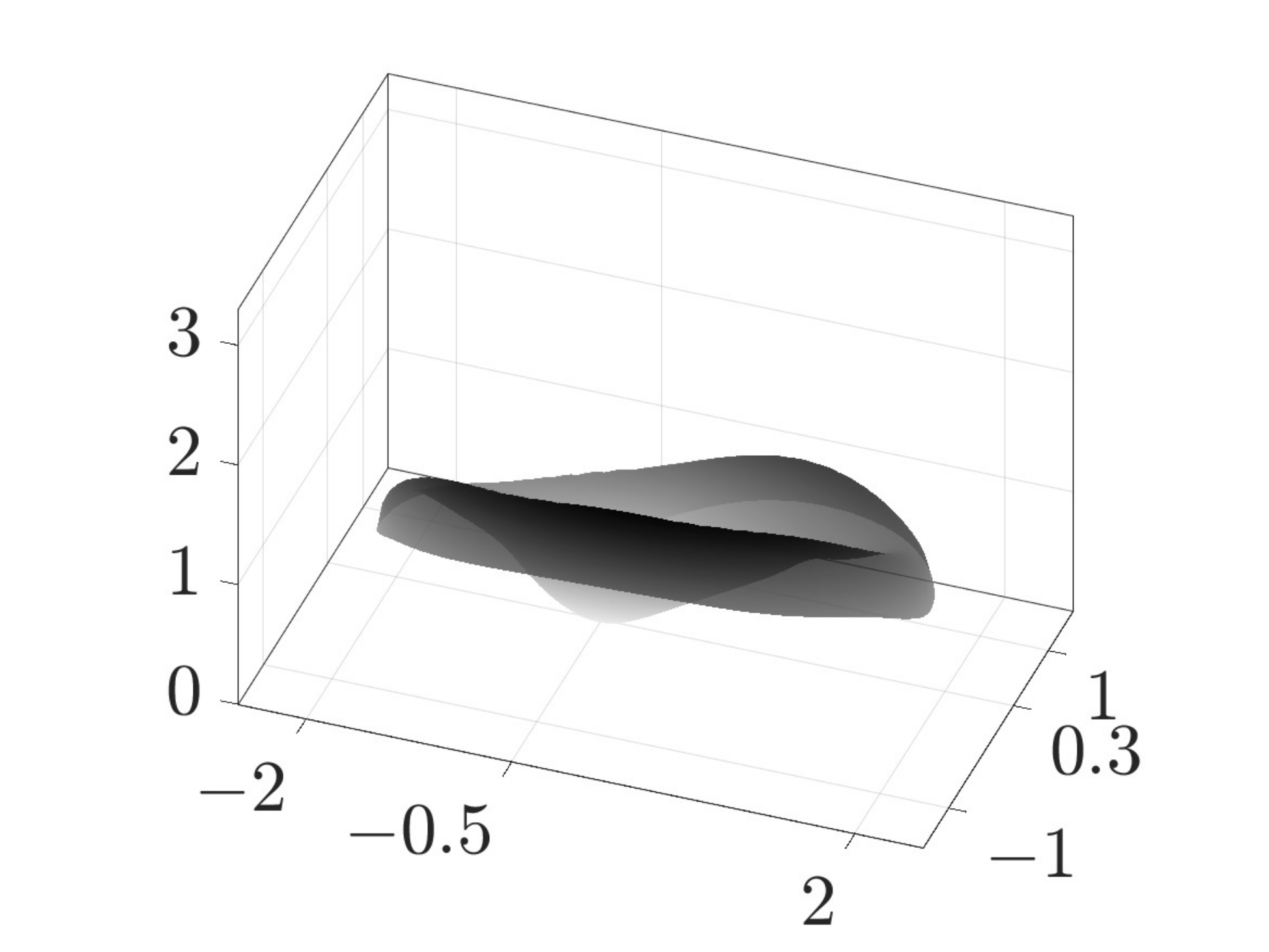}}
		\includegraphics[trim = 15 60 0 81, clip, height = 75pt]{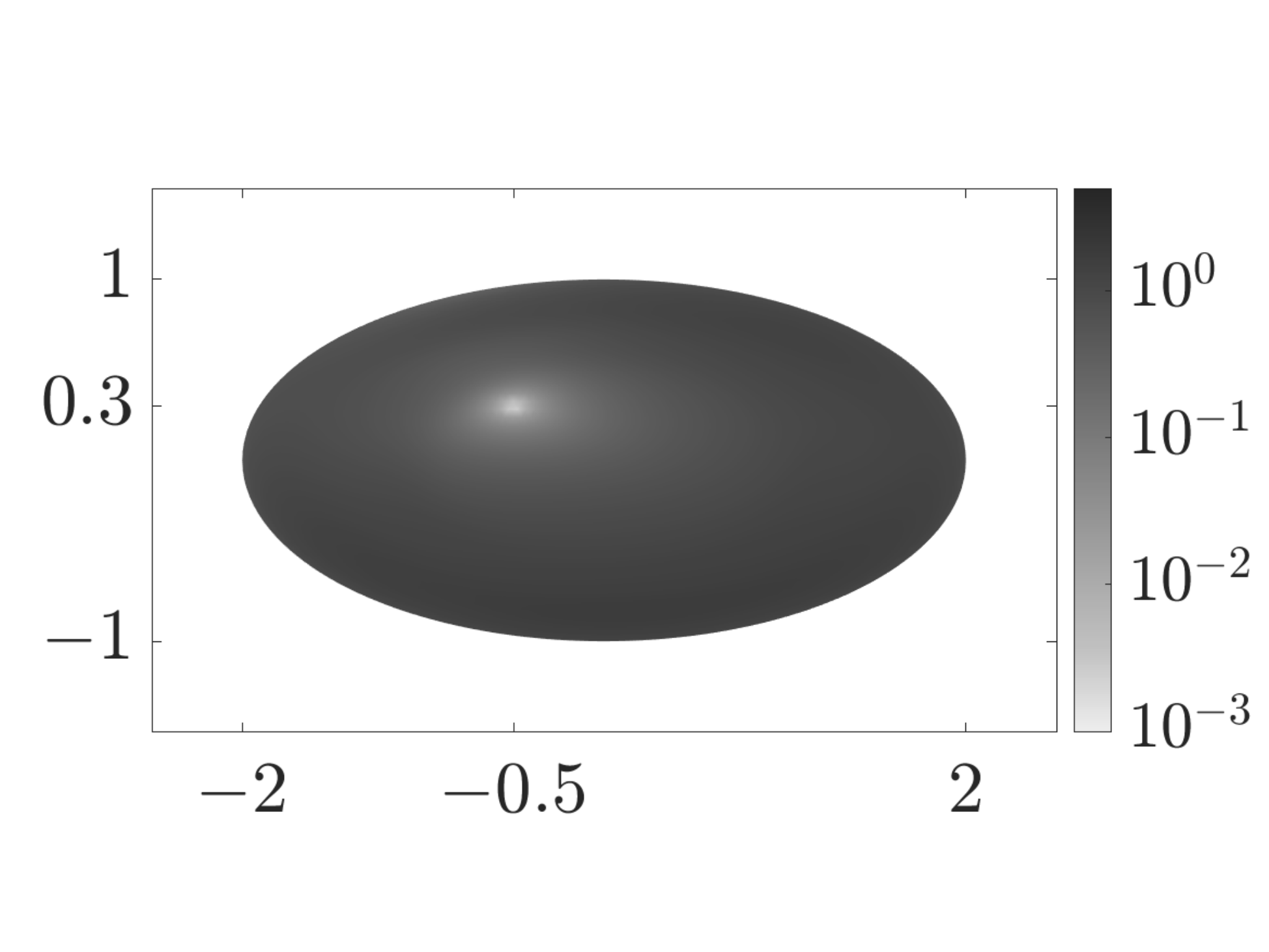}
		\caption{$T' = 10$}
	\end{subfigure}
	\\
	\begin{subfigure}{\textwidth}
		\centering
		\includegraphics[height = 75pt]{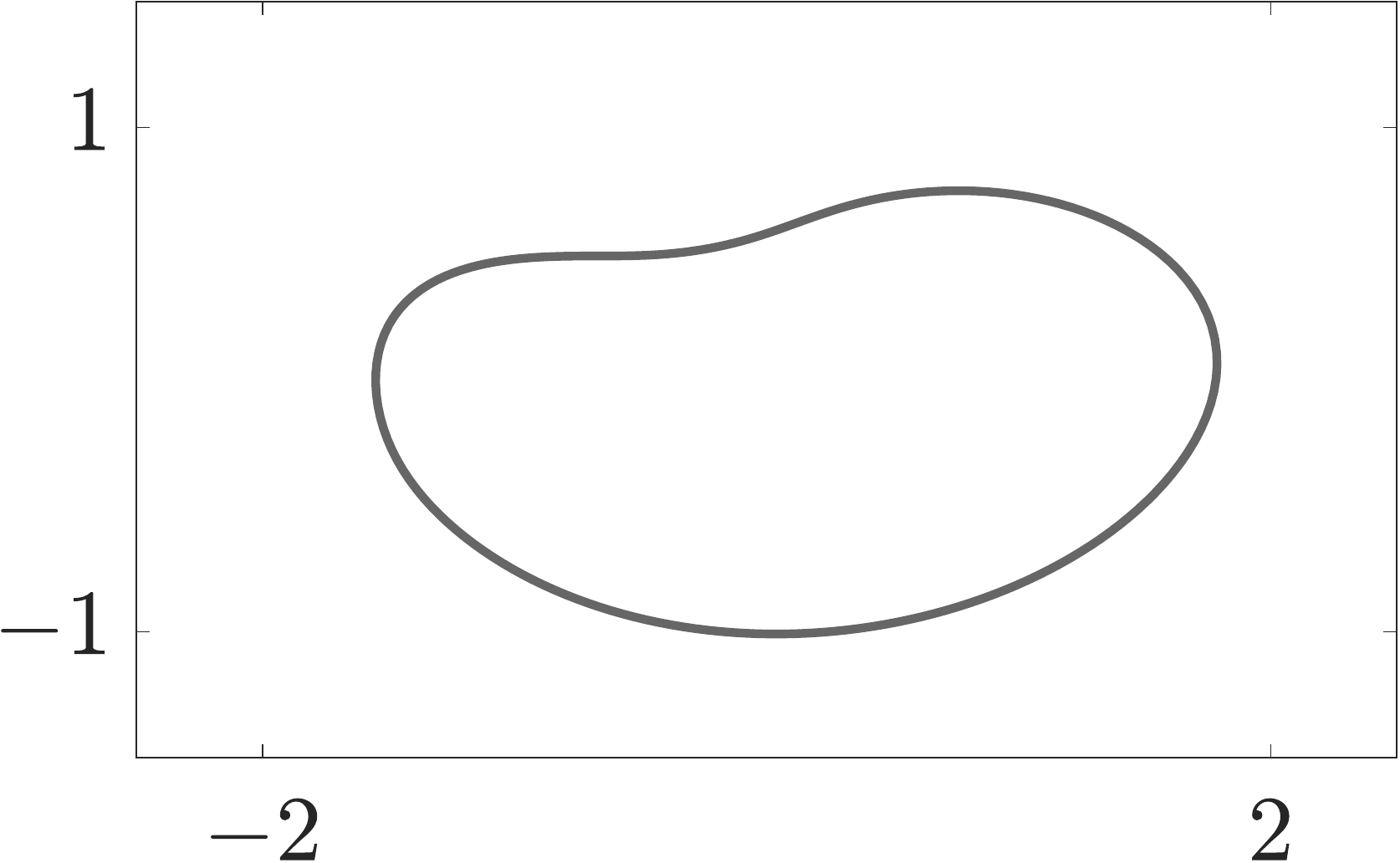}
		\raisebox{-20pt}{\includegraphics[height = 120pt]{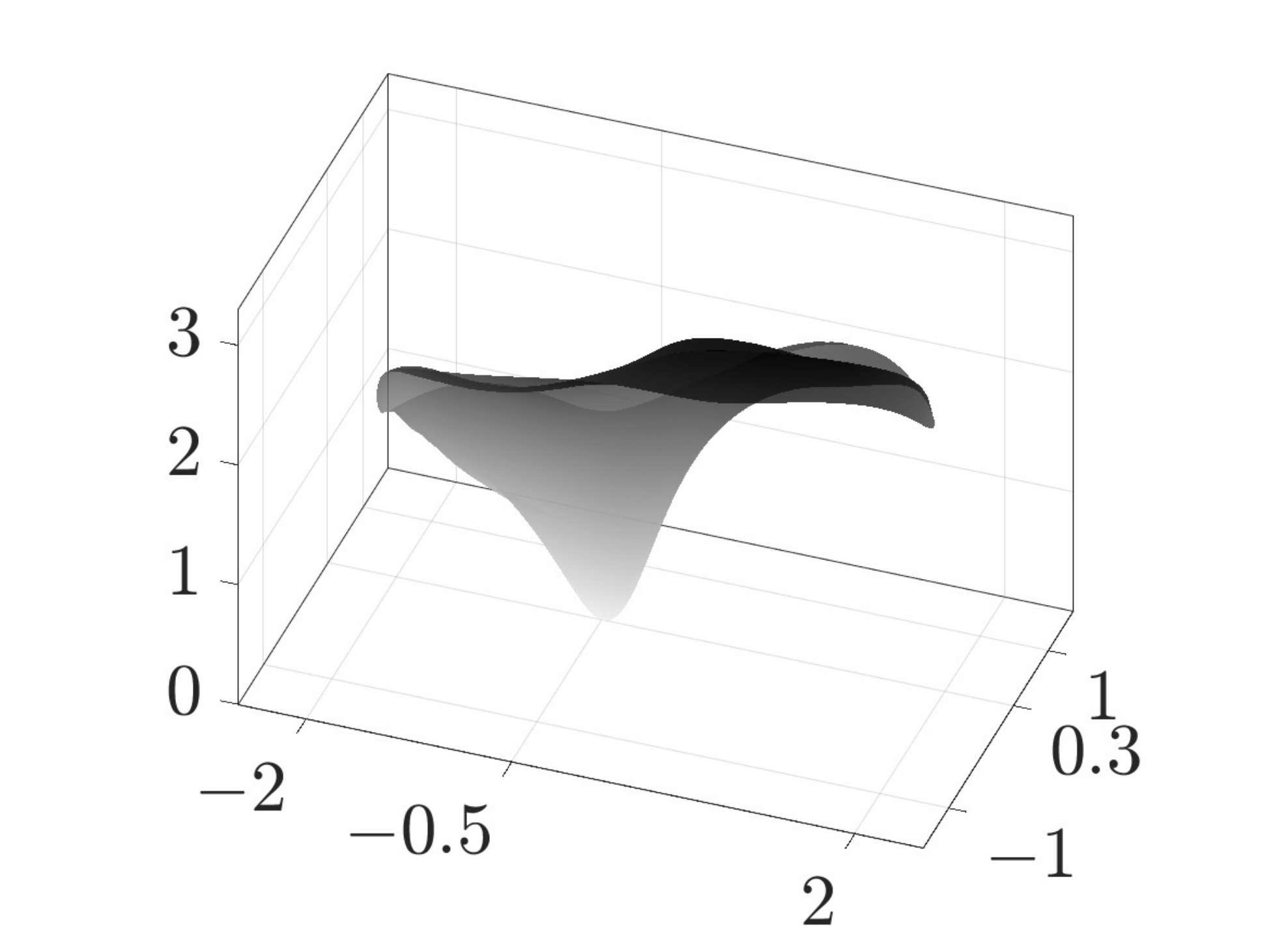}}
		\includegraphics[trim = 15 60 0 81, clip, height = 75pt]{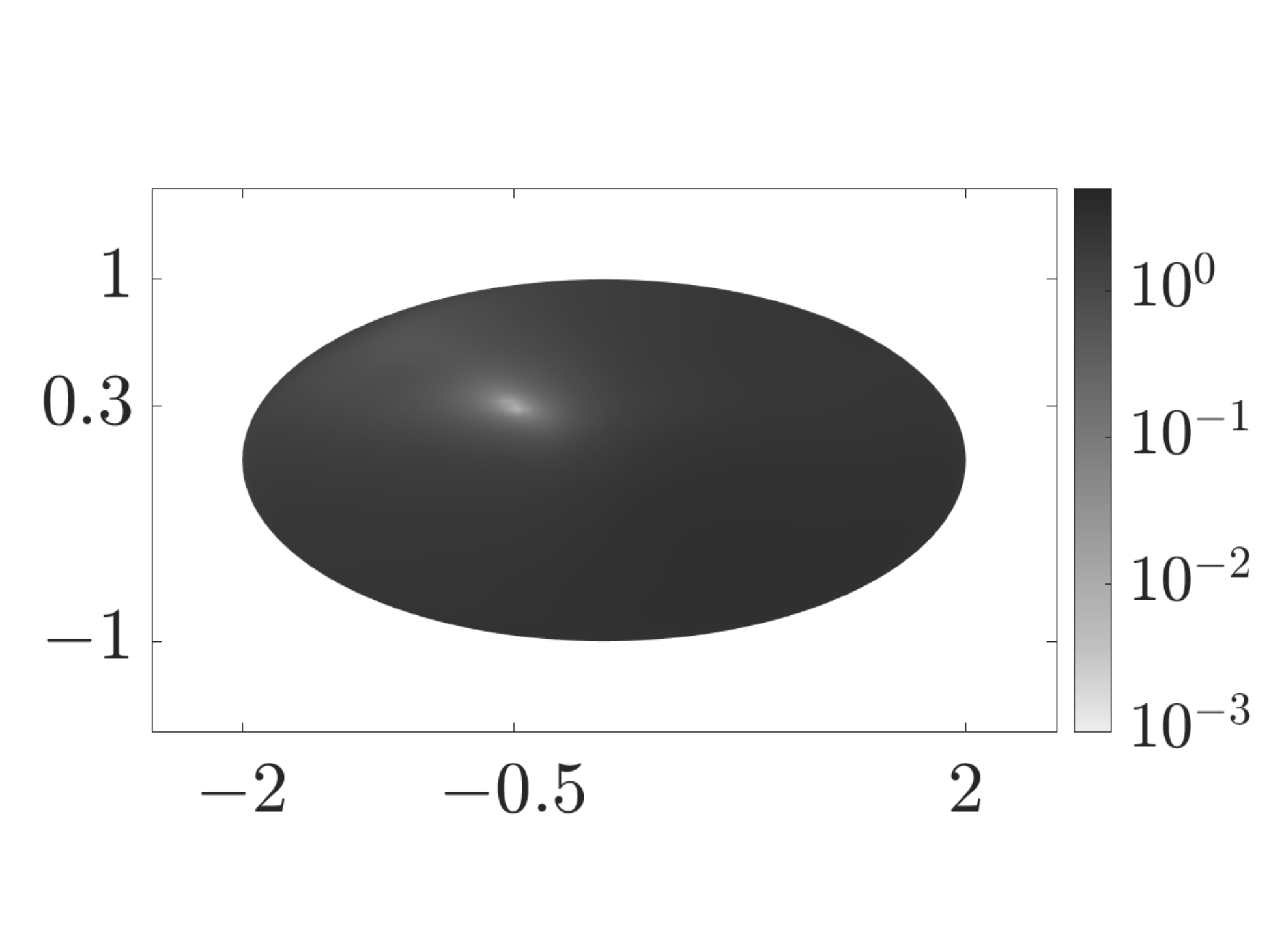}
		\caption{$T' = 15$}
	\end{subfigure}
	\\
	\begin{subfigure}{\textwidth}
		\centering
		\includegraphics[height = 75pt]{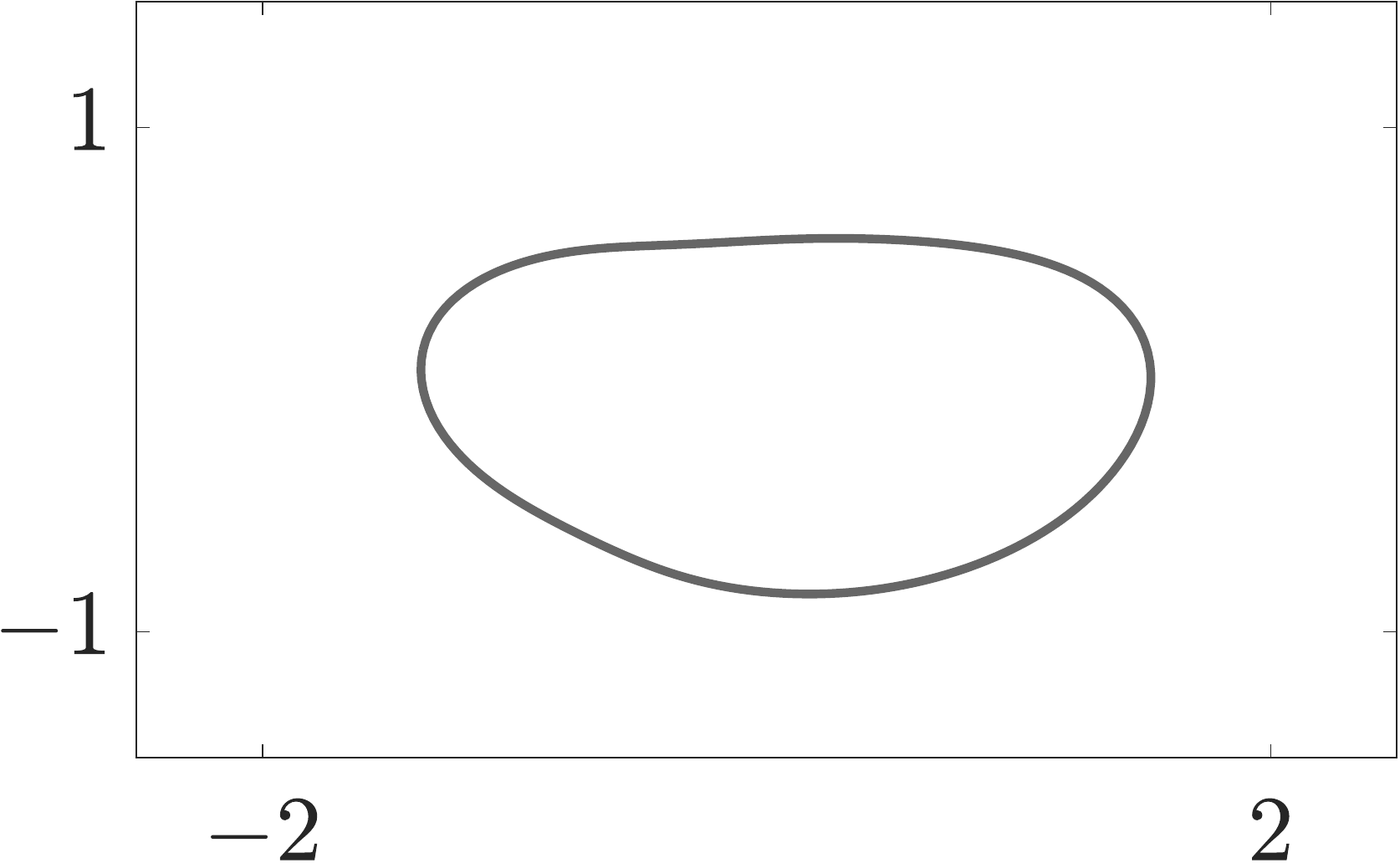}
		\raisebox{-20pt}{\includegraphics[height = 120pt]{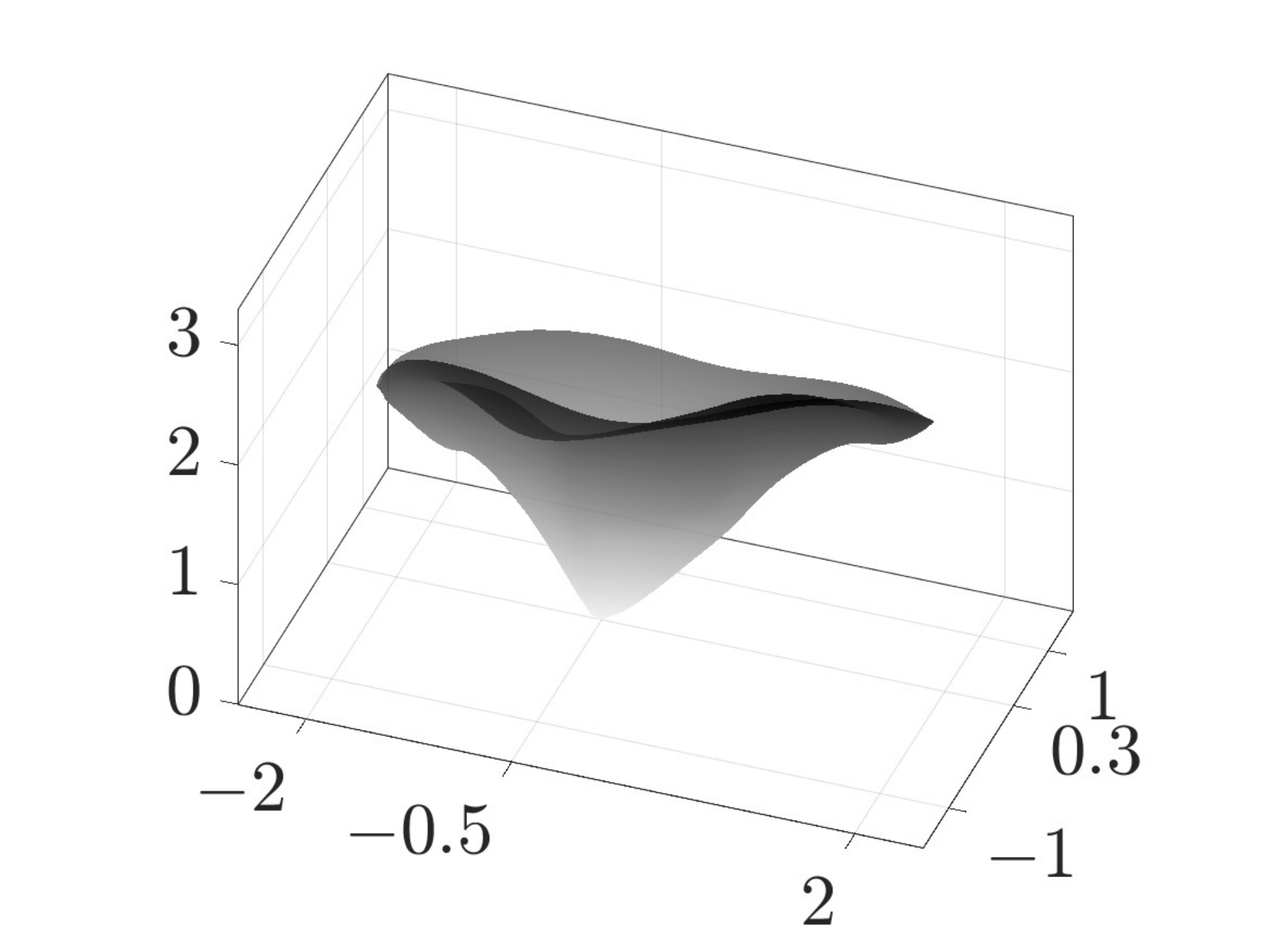}}
		\includegraphics[trim = 15 60 0 81, clip, height = 75pt]{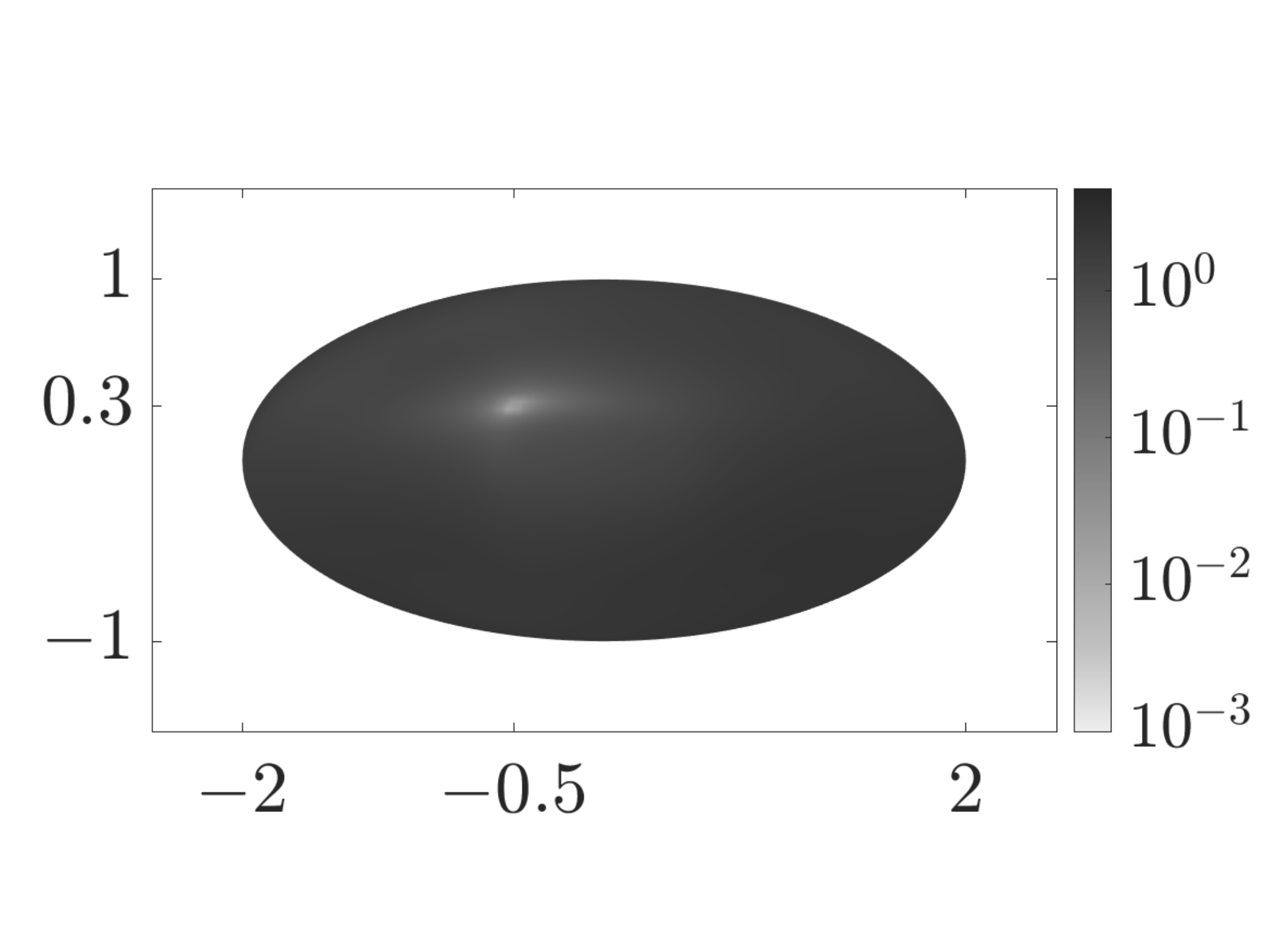}
		\caption{$T' = 20$}
	\end{subfigure}
	\\
	\begin{subfigure}{\textwidth}
		\centering
		\includegraphics[height = 75pt]{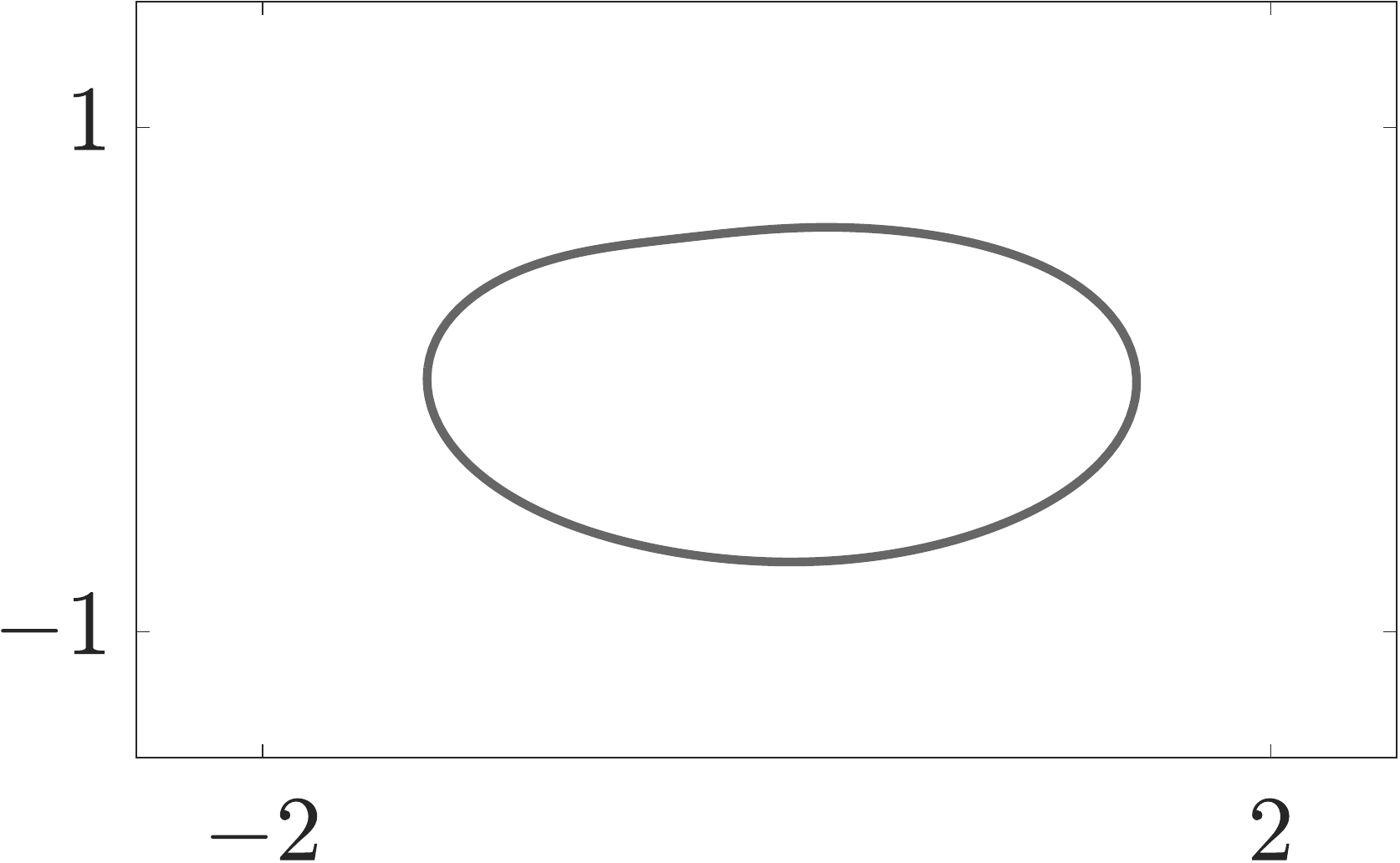}
		\raisebox{-20pt}{\includegraphics[height = 120pt]{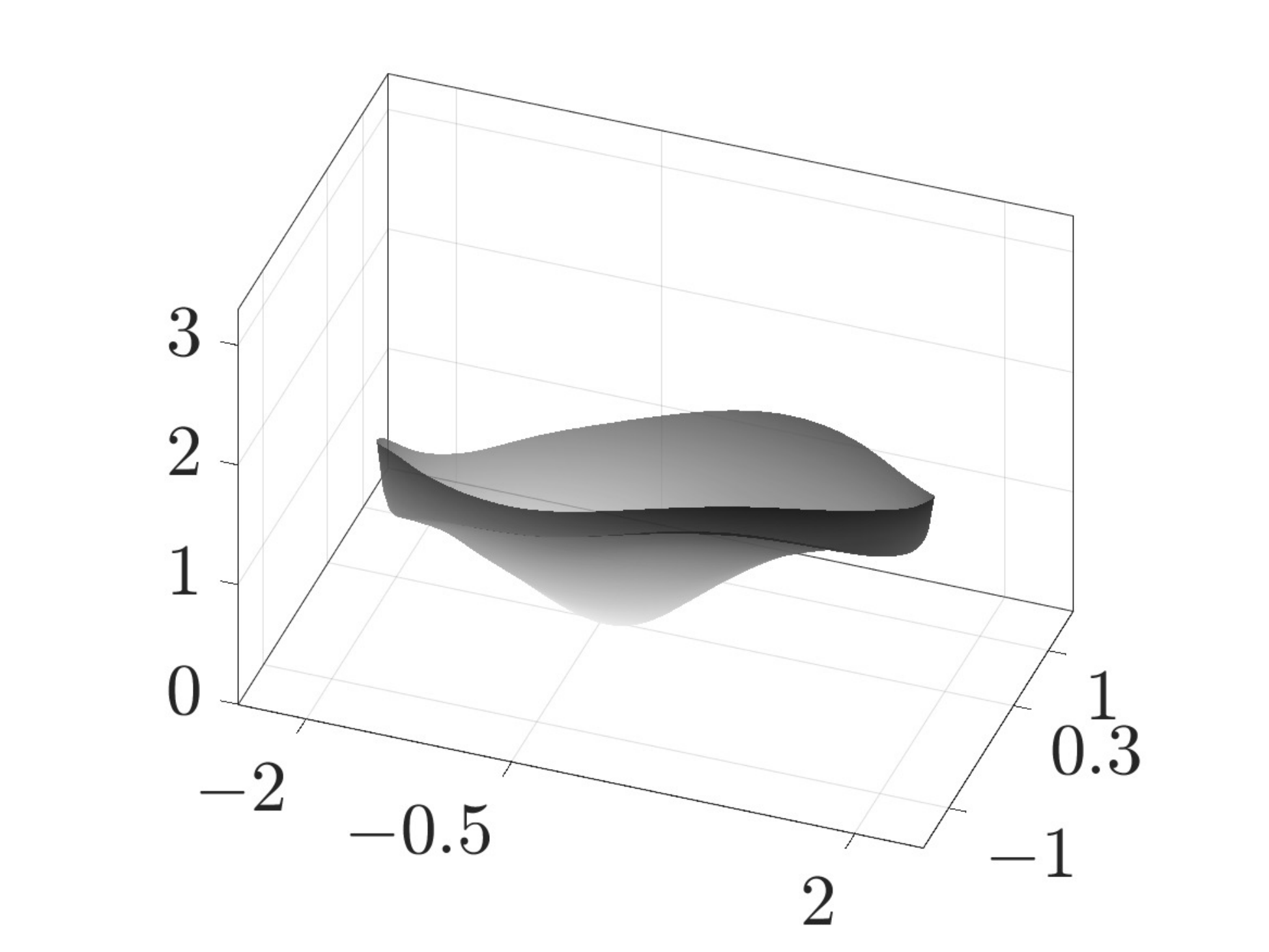}}
		\includegraphics[trim = 15 60 0 81, clip, height = 75pt]{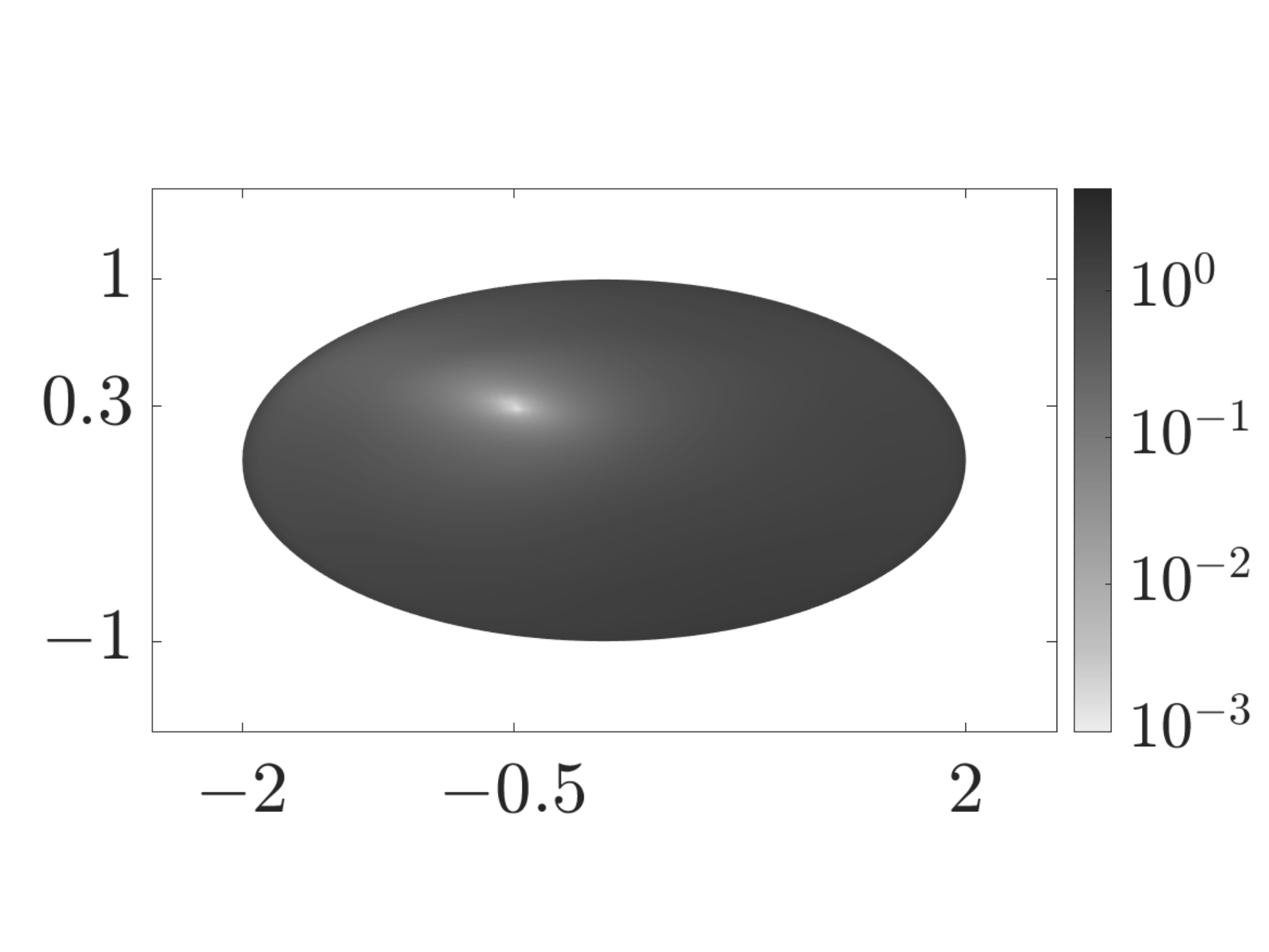}
		\caption{$T' = 25$}
	\end{subfigure}
	\caption{Effect of the growth potential's center $c$ on the deformed domain at different times $T'$. On the left column are the ground truth domains obtained with $c=c_{\mathrm{true}}=(-0.5, 0.3)$. The middle and right column are plots of the varifold distance to this ground truth domain when varying $c$.}
	\label{fig:min_problem}
\end{figure}

To give a hint at the feasibility of such an inverse problem in a simple controlled setting, we consider the deformed domains obtained with the simulation of Figure \ref{fig:expl} at different times $T'$ and for each $T'$, we run our evolution model up to $T'$ but by varying the center $c$ of the initial growth potential in \eqref{eq:initial_growth_pot_example} (all other parameters in the model being kept the same). The shape of the domain's boundary at $T'$ for the different choices of $c$ is then compared to the ground truth (i.e. the one obtained for $c=c_{true}$). To quantify this difference between two boundary curves, we evaluate their distance for the varifold metric introduced in \cite{charon2013varifold} that is known to provide a robust measure of proximity between curves. The results are shown in Figure \ref{fig:min_problem} in which the left column displays the ground truth domains for the different $T'$ while the middle and right columns are plots of the varifold energy with respect to the two coordinates of $c$ with bright colors corresponding to lower values of the varifold distance i.e., closer proximity to the ground truth domain. As can be seen and expected, for each time $T'$, we obtain a minimum distance of $0$ at $c=c_{true}$ but one can further notice that the energy is relatively well behaved around that minimum: for instance we do not observe empirically the presence of additional local minimums. We also note that the global minimums appear more pronounced at intermediate times than at early or late times. 

Although very preliminary, those results suggest that formulating the inverse problem as the minimization of the varifold distance to the observed final domain over the parameters of the initial potential is an a priori viable approach for this problem. In future work, we therefore plan to analyze the well-posedness of such a minimization problem and investigate efficient methods for numerical optimization, in particular to evaluate the gradient of the energy.

\section*{Acknowledgements}
Nicolas Charon acknowledges the support of the NSF through the grant DMS-1945224.

\bibliography{growth}

\begin{thebibliography}{31}
\providecommand{\natexlab}[1]{#1}
\providecommand{\url}[1]{\texttt{#1}}
\expandafter\ifx\csname urlstyle\endcsname\relax
  \providecommand{\doi}[1]{doi: #1}\else
  \providecommand{\doi}{doi: \begingroup \urlstyle{rm}\Url}\fi

\bibitem[Ambrosi et~al.(2011)Ambrosi, Ateshian, Arruda, Cowin, Dumais, Goriely,
  Holzapfel, Humphrey, Kemkemer, Kuhl, Olberding, Taber, and
  Garikipati]{ambrosi2011perspectives}
D.~Ambrosi, G.~A. Ateshian, E.~M. Arruda, S.~C. Cowin, J.~Dumais, A.~Goriely,
  G.~A. Holzapfel, J.~D. Humphrey, R.~Kemkemer, E.~Kuhl, J.~E. Olberding, L.~A.
  Taber, and K.~Garikipati.
\newblock Perspectives on biological growth and remodeling.
\newblock \emph{Journal of the Mechanics and Physics of Solids}, 59\penalty0
  (4):\penalty0 863--883, April 2011.

\bibitem[Arguillere et~al.(2014)Arguillere, Trélat, Trouvé, and
  Younes]{arguillere2014shape}
Sylvain Arguillere, Emmanuel Trélat, Alain Trouvé, and Laurent Younes.
\newblock Shape deformation and optimal control.
\newblock \emph{ESAIM: Proceedings and Surveys}, 45:\penalty0 300--307, 2014.
\newblock Publisher: EDP Sciences.

\bibitem[Aronszajn(1950)]{aronszajn1950theory}
Nachman Aronszajn.
\newblock Theory of reproducing kernels.
\newblock \emph{Transactions of the American mathematical society}, 68\penalty0
  (3):\penalty0 337--404, 1950.

\bibitem[Bajcinca(2013)]{bajcinca2013analytic}
Naim Bajcinca.
\newblock Analytic solutions to optimal control problems in crystal growth
  processes.
\newblock \emph{Journal of Process Control}, 23\penalty0 (2):\penalty0
  224--241, February 2013.

\bibitem[Beg et~al.(2005)Beg, Miller, Trouv{\'e}, and Younes]{beg2005computing}
M~Faisal Beg, Michael~I Miller, Alain Trouv{\'e}, and Laurent Younes.
\newblock Computing large deformation metric mappings via geodesic flows of
  diffeomorphisms.
\newblock \emph{International journal of computer vision}, 61\penalty0
  (2):\penalty0 139--157, 2005.

\bibitem[Bernauer and Herzog(2011)]{bernauer2011optimal}
Martin~K. Bernauer and Roland Herzog.
\newblock Optimal {Control} of the {Classical} {Two}-{Phase} {Stefan} {Problem}
  in {Level} {Set} {Formulation}.
\newblock \emph{SIAM Journal on Scientific Computing}, 33\penalty0
  (1):\penalty0 342--363, January 2011.

\bibitem[Bressan and Lewicka(2018)]{bressan2018model}
Alberto Bressan and Marta Lewicka.
\newblock A {Model} of {Controlled} {Growth}.
\newblock \emph{Archive for Rational Mechanics and Analysis}, 227\penalty0
  (3):\penalty0 1223--1266, March 2018.

\bibitem[Bruveris and Vialard(2016)]{bruveris2016completeness}
Martins Bruveris and François-Xavier Vialard.
\newblock On {Completeness} of {Groups} of {Diffeomorphisms}.
\newblock \emph{arXiv:1403.2089 [math]}, January 2016.

\bibitem[Burdzy et~al.(2004)Burdzy, Chen, and Sylvester]{burdzy2004heat}
Chris Burdzy, Zhen-Qing Chen, and John Sylvester.
\newblock The heat equation in time dependent domains with insulated
  boundaries.
\newblock \emph{Journal of mathematical analysis and applications},
  294\penalty0 (2):\penalty0 581--595, 2004.

\bibitem[Charon and Trouv{\'e}(2013)]{charon2013varifold}
Nicolas Charon and Alain Trouv{\'e}.
\newblock The varifold representation of nonoriented shapes for diffeomorphic
  registration.
\newblock \emph{SIAM Journal on Imaging Sciences}, 6\penalty0 (4):\penalty0
  2547--2580, 2013.

\bibitem[Ciarlet(1988)]{ciarlet1988three}
Philippe~G Ciarlet.
\newblock \emph{Three-dimensional elasticity}, volume~20.
\newblock Elsevier, 1988.

\bibitem[Dupuis et~al.(1998)Dupuis, Grenander, and Miller]{dupuis1998variation}
P~Dupuis, U~Grenander, and MI~Miller.
\newblock Variation {Problems} on {Flows} of {Diffeomorphisms} for {Image}
  {Matching}.
\newblock \emph{Quarterly of Applied Mathematics}, LVI\penalty0 (4):\penalty0
  587--600, 1998.

\bibitem[Goudon and Vasseur(2010)]{goudon2010regularity}
Thierry Goudon and Alexis Vasseur.
\newblock Regularity analysis for systems of reaction-diffusion equations.
\newblock In \emph{Annales scientifiques de l'Ecole normale sup{\'e}rieure},
  volume~43, pages 117--142, 2010.

\bibitem[Gris et~al.(2018)Gris, Durrleman, and Trouvé]{gris2018sub-riemannian}
Barbara Gris, Stanley Durrleman, and Alain Trouvé.
\newblock A {Sub}-{Riemannian} {Modular} {Framework} for
  {Diffeomorphism}-{Based} {Analysis} of {Shape} {Ensembles}.
\newblock \emph{SIAM Journal on Imaging Sciences}, 11\penalty0 (1):\penalty0
  802--833, January 2018.

\bibitem[Hsieh et~al.(2019)Hsieh, Arguill{\`e}re, Charon, Miller, and
  Younes]{Hsieh2019}
Dai-Ni Hsieh, Sylvain Arguill{\`e}re, Nicolas Charon, Michael~I. Miller, and
  Laurent Younes.
\newblock A model for elastic evolution on foliated shapes.
\newblock In Albert C.~S. Chung, James~C. Gee, Paul~A. Yushkevich, and Siqi
  Bao, editors, \emph{Information Processing in Medical Imaging}, pages
  644--655. Springer International Publishing, 2019.

\bibitem[Hsieh et~al.(2020)Hsieh, Arguillère, Charon, and
  Younes]{hsieh2020mechanistic}
Dai-Ni Hsieh, Sylvain Arguillère, Nicolas Charon, and Laurent Younes.
\newblock Mechanistic {Modeling} of {Longitudinal} {Shape} {Changes}: equations
  of motion and inverse problems.
\newblock \emph{arXiv:2003.05512 [math]}, March 2020.

\bibitem[Humphrey(2003)]{humphrey2003review}
J.d. Humphrey.
\newblock Review {Paper}: {Continuum} biomechanics of soft biological tissues.
\newblock \emph{Proceedings of the Royal Society of London. Series A:
  Mathematical, Physical and Engineering Sciences}, 459\penalty0
  (2029):\penalty0 3--46, January 2003.
\newblock Publisher: Royal Society.

\bibitem[Joshi and Miller(2000)]{joshi2000landmark}
S.~Joshi and M.~Miller.
\newblock Landmark matching via large deformation diffeomorphisms.
\newblock \emph{IEEE transactions in Image Processing}, 9\penalty0
  (8):\penalty0 1357--1370, 2000.

\bibitem[Kulason et~al.(2020)Kulason, Miller, Trouvé, and
  Initiative]{kulason2020reaction-diffusion}
Sue Kulason, Michael~I. Miller, Alain Trouvé, and Alzheimer’s
  Disease~Neuroimaging Initiative.
\newblock Reaction-{Diffusion} {Model} of {Cortical} {Atrophy} {Spread} during
  {Early} {Stages} of {Alzheimer}’s {Disease}.
\newblock \emph{bioRxiv}, page 2020.11.02.362855, November 2020.

\bibitem[Lady{\v{z}}enskaja et~al.(1988)Lady{\v{z}}enskaja, Solonnikov, and
  Uralceva]{ladyvzenskaja1988linear}
Olga~A Lady{\v{z}}enskaja, Vsevolod~Alekseevich Solonnikov, and Nina~N
  Uralceva.
\newblock \emph{Linear and quasi-linear equations of parabolic type},
  volume~23.
\newblock American Mathematical Soc., 1988.

\bibitem[Lewicka et~al.(2011)Lewicka, Mahadevan, and Pakzad]{lewicka2011Foppl}
Marta Lewicka, L.~Mahadevan, and Mohammad~Reza Pakzad.
\newblock The {Föppl}-von {Kármán} equations for plates with incompatible
  strains.
\newblock \emph{Proceedings of the Royal Society A: Mathematical, Physical and
  Engineering Sciences}, 467\penalty0 (2126):\penalty0 402--426, February 2011.

\bibitem[Lions and Magenes(1972)]{magenes1972non}
Jacques~Louis Lions and Enrico Magenes.
\newblock \emph{Non-Homogeneous Boundary Value Problems and Applications Vol.\
  1}, volume 181 of \emph{Grundlehren der mathematischen Wissenschaften}.
\newblock Springer-Verlag Berlin Heidelberg, 1972.

\bibitem[Marsden and Hughes(1994)]{marsden1994mathematical}
Jerrold~E Marsden and Thomas~JR Hughes.
\newblock \emph{Mathematical foundations of elasticity}.
\newblock Courier Corporation, 1994.

\bibitem[Menzel and Kuhl(2012)]{menzel2012frontiers}
Andreas Menzel and Ellen Kuhl.
\newblock Frontiers in growth and remodeling.
\newblock \emph{Mechanics Research Communications}, 42:\penalty0 1--14, June
  2012.

\bibitem[Rodriguez et~al.(1994)Rodriguez, Hoger, and
  McCulloch]{rodriguez1994stress}
Edward~K Rodriguez, Anne Hoger, and Andrew~D McCulloch.
\newblock Stress-dependent finite growth in soft elastic tissues.
\newblock \emph{Journal of biomechanics}, 27\penalty0 (4):\penalty0 455--467,
  1994.

\bibitem[Trifkovic et~al.(2009)Trifkovic, Sheikhzadeh, and
  Rohani]{trifkovic2009multivariable}
Milana Trifkovic, Mehdi Sheikhzadeh, and Sohrab Rohani.
\newblock Multivariable real-time optimal control of a cooling and antisolvent
  semibatch crystallization process.
\newblock \emph{AIChE Journal}, 55\penalty0 (10):\penalty0 2591--2602, 2009.

\bibitem[Trouvé(1995)]{trouve1995approach}
Alain Trouvé.
\newblock An approach of pattern recognition through infinite dimensional group
  action.
\newblock \emph{Rapport de recherche du LMENS}, 1995.

\bibitem[Younes(2011)]{younes2011constrained}
Laurent Younes.
\newblock Constrained {Diffeomorphic} {Shape} {Evolution}.
\newblock \emph{Foundations of Computational Mathematics}, 2011.

\bibitem[Younes(2014)]{younes2014gaussian}
Laurent Younes.
\newblock Gaussian diffeons for surface and image matching within a
  {Lagrangian} framework.
\newblock \emph{Geometry, Imaging and Computing}, 1\penalty0 (1):\penalty0
  141--171, 2014.

\bibitem[Younes(2019)]{younes2019shapes}
Laurent Younes.
\newblock \emph{Shapes and Diffeomorphisms}, volume 171.
\newblock Springer, 2019.

\bibitem[Younes et~al.(2020)Younes, Gris, and
  Trouvé]{younes2020sub-riemannian}
Laurent Younes, Barbara Gris, and Alain Trouvé.
\newblock Sub-{Riemannian} {Methods} in {Shape} {Analysis}.
\newblock In Philipp Grohs, Martin Holler, and Andreas Weinmann, editors,
  \emph{Handbook of {Variational} {Methods} for {Nonlinear} {Geometric}
  {Data}}, pages 463--495. Springer International Publishing, Cham, 2020.

\end{thebibliography}

\end{document}